\numberwithin{equation}{section}
\newcommand{\bigO}{\mathcal{O}}
\newcommand{\gfn}{\mathbf{g}}
\newcommand{\be}{\begin{equation}}
\newcommand{\ee}{\end{equation}}
\newcommand{\realR}{\mathbb{R}}
\newcommand{\compC}{\mathbb{C}}
\newcommand{\stripS}{\mathbb{S}}
\newcommand{\intZ}{\mathbb{Z}}
    \def\Re{{\rm Re \,}}
    \def\Im{{\rm Im \,}}
    \def\Ai{{\rm Ai \,}}
    \def\bigO{{\cal O}}
    \def\P2n{{\rm P}_{{\rm II}}^{(n)}}
    \newtheorem{theorem}{Theorem}[section]
    \newtheorem{lemma}[theorem]{Lemma}
    \newtheorem{lem}[theorem]{Lemma}
    \newtheorem{cor}[theorem]{Corollary}
    \newtheorem{Definition}[theorem]{Definition}
    \newtheorem{Remark}[theorem]{Remark}
    \newtheorem{Example}[theorem]{Example}
    \newtheorem{Assumptions}[theorem]{Assumptions}
\trivlist \item[\hskip \labelsep{\bf Proof}]}%
    \DeclareMathOperator*{\Tr}{Tr}
\newcommand{\Jlike}{\mathbf{J}}
\newcommand{\Jinv}{\mathbf{I}}
\begin{document}

\title{Universality for random matrices with equi-spaced external source: a case study of a biorthogonal ensemble}

\author{Tom Claeys\thanks{Universit\'{e} Catholique de Louvain, Chemin du cyclotron 2, B-1348 Louvain-La-Neuve, Belgium \newline
    email: \href{mailto:tom.claeys@uclouvain.be}{\protect\nolinkurl{tom.claeys@uclouvain.be}}} \ and
  Dong Wang\thanks{School of Mathematical Sciences, University of Chinese Academy of Sciences, Beijing, P.~R.~China 100049 \newline
    email: \href{mailto:wangdong@wangd-math.xyz}{\protect\nolinkurl{wangdong@wangd-math.xyz}}}}

\maketitle

\begin{abstract}
  We prove the edge and bulk universality of random Hermitian matrices with equi-spaced external source. One feature of our method is that we use neither a Christoffel-Darboux type formula, nor a double-contour formula, which are standard methods to prove universality results for exactly solvable models. This matrix model is an example of a biorthogonal ensemble, which is a special kind of determinantal point process whose kernel generally does not have a Christoffel-Darboux type formula or double-contour representation. Our methods may showcase how to handle universality problems for biorthogonal ensembles in general.
\end{abstract}

\section{Introduction}

\paragraph{The model}

We consider the space of $n\times n$ Hermitian matrices with a probability measure of the form
\begin{equation}\label{prob}
\frac{1}{Z_n}\exp(-n\Tr \left[V(M)-AM\right])dM,
\end{equation}
where
\begin{equation}
  d M=\prod_{i<j}d \Re M_{ij}d \Im M_{ij}\ \prod_{j=1}^n d M_{jj}.
\end{equation}
This is a random matrix model with external source $A$, a deterministic Hermitian matrix which can be assumed diagonal without loss of generality. Throughout the paper, we assume the technical condition that the external field $V(x)$ is real analytic and strongly convex, which means that 
\begin{equation} \label{convex}
  V''(x) \geq c > 0
\end{equation}
for some constant $c$. This condition implies that $V$ grows sufficiently fast as $x\to\pm\infty$ for the probability measure \eqref{prob} to be well-defined. In this paper, we assume $A$ is given by
\begin{equation} \label{eq:A_is_diagonal}
  A = {\rm diag}(a_1, a_2, \dotsc, a_n),
\end{equation}
with the eigenvalues $a_1,\ldots, a_n$ of $A$ equi-spaced. Without loss of generality, we may then assume that
\begin{equation} \label{eq:a_j_are_equispaced}
  a_{j} = \frac{j-1}{n}, \quad \text{ $j = 1, 2, \dotsc, n$.}
\end{equation}

Random matrix models with external source were introduced in \cite{Brezin-Hikami98, Zinn_Justin97}. In the case of a Gaussian external field $V(x)=x^2/2$, a random matrix $M$ from the ensemble \eqref{prob} follows the same probability law as a Gaussian Unitary Ensemble (GUE) matrix summed with the deterministic matrix $A$. This model and its dynamical generalization called Dyson's Brownian motion, with deterministic and random external source $A$, have been important in the study of universality for Wigner matrices, see e.g.\ \cite{Erdos-Ramirez-Schlein-Tao-Vu-Yau10, Erdos-Ramirez-Schlein-Yau10, Erdos-Yau17, Johansson01a, Johansson12}.

Yet another interpretation in this case is that the eigenvalues of $M$ follow the same joint probability distribution as $n$ non-intersecting Brownian motions with the eigenvalues of $A$ as starting points and $0$ as the common endpoint \cite{Gautie-Le_Doussal-Majumdar-Schehr19, Grela-Majumdar-Schehr21}.

For general $V$, although the macroscopic large $n$ behavior of the eigenvalues has been studied in some generality \cite{Eynard-Orantin09}, their microscopic behavior was only described rigorously if $A$ has two different eigenvalues with equal multiplicity \cite{Bleher-Kuijlaars04, Bleher-Kuijlaars05,Aptekarev-Bleher-Kuijlaars05, Bleher-Kuijlaars07, Bleher-Delvaux-Kuijlaars10, Aptekarev-Lysov-Tulyakov11} and if $A$ has a sufficiently small number of non-zero eigenvalues (aka spiked model) \cite{Baik-Wang10a, Baik-Wang10, Bertola-Buckingham-Lee-Pierce11a, Bertola-Buckingham-Lee-Pierce11}.

\paragraph{Biorthogonal ensembles}

In this paper, we use the term biorthogonal ensemble for an $n$-particle probability distribution defined on an interval $I\subset\mathbb R$, which may be semi-infinite or $(-\infty, \infty)$, such that the joint probability density  is of the form
\begin{equation} \label{eq:biorthogonal}
  \frac{1}{Z_n} \prod_{i < j} (\lambda_j - \lambda_i)(f(\lambda_j) - f(\lambda_i)) \prod^n_{i = 1} w(\lambda_i) d\lambda_i,
\end{equation}
where $\lambda_1 \leq \lambda_2 \leq \dotsb \leq \lambda_n \,\in I$ are the positions of the particles, $w$ is a non-negative integrable weight function on $I$, and $f(\lambda)$ describes one part of the two-particle interaction in the form of $\lvert f(\lambda_j) - f(\lambda_i) \rvert$, while we note that the other part of the two-particle interaction is given by $\lvert \lambda_j - \lambda_i \rvert$. To make the probability density well defined, we may require that the function $f(x)$ is increasing as $x \in I$. This is a generalization of the $n$-particle distribution with the joint probability density
\begin{equation} \label{eq:orthogonal_poly}
  \frac{1}{Z_n} \prod_{i < j} (\lambda_j - \lambda_i)^2 \prod^n_{i = 1} w(\lambda_i) d\lambda_i,
\end{equation}
often referred to as an orthogonal polynomial ensemble, which corresponds to \eqref{eq:biorthogonal} with $f(x) = x$.

The biorthogonal ensembles defined by \eqref{eq:biorthogonal} were systematically studied by Borodin in \cite{Borodin99}, in particular for  $f(x) = x^{\theta}$ and $w(x)$ one of the classical Hermite, Laguerre and Jacobi weights. Before \cite{Borodin99}, the special case of \eqref{eq:biorthogonal} with $f(x) = x^{\theta}$ and $w(x) = x^{\alpha} e^{-x}$ was introduced by Muttalib in \cite{Muttalib95} from a physical point of view. Hence, the special case of \eqref{eq:biorthogonal} with $f(x) = x^{\theta}$ is also called the Muttalib-Borodin ensemble, see \cite{Forrester-Wang15, Kuijlaars-Molag19, Molag20, Claeys-Girotti-Stivigny19, Charlier-Lenells-Mauersberger19} for example. Biorthogonal ensembles and variations thereof are also considered in \cite{Beenakker97, Lueck-Sommers-Zirnbauer06, Bloom-Levenberg-Totik-Wielonsky17, Butez17, Credner-Eichelsbacher15, Eichelsbacher-Sommerauer-Stolz11, Betea-Occelli20, Betea-Occelli20a, Gautie-Le_Doussal-Majumdar-Schehr19, Grela-Majumdar-Schehr21}.

For the biorthogonal ensemble given by \eqref{eq:biorthogonal}, we define the monic biorthogonal polynomials $p_i$ and $q_i$ with $i = 0, 1, 2, \dotsc$ being the degree of $p_i$ and $q_i$, such that
\begin{equation} \label{eq:biorth_poly}
  \int^b_a p_i(x) q_j(f(x)) w(x) dx = h_i \delta_{ij}.
\end{equation}
We note that by the positivity of the distribution \eqref{eq:biorthogonal}, $p_i$ and $q_i$ are all uniquely defined. Thus, there is a well defined sum
\begin{equation} \label{eq:pre_kernel}
  K_n(x, y) = \sum^{n - 1}_{i = 0} \frac{1}{h_i} p_i(x) q_i(f(y)) \sqrt{w(x) w(y)}.
\end{equation}
Then by the general theory of determinantal point processes (see \cite{Tracy-Widom98} for example), the $n$-particle distribution \eqref{eq:biorthogonal} is a determinantal point process, whose correlation kernel can be taken to be $g(x) K_n(x, y) g(y)^{-1}$  for any nonzero function $g(x)$ on $I$.

We remark that the term ``biorthogonal ensemble'' was, to the best of the authors' knowledge, first introduced by Borodin in \cite{Borodin99}, and we use the term in a slightly more general sense than \cite{Borodin99}. Other authors have also used the term in more general contexts, like \cite{Desrosiers-Forrester08}. We stick to the definition given in \eqref{eq:biorthogonal}, so that the biorthogonal polynomials $p_i, q_i$ can be defined by \eqref{eq:biorth_poly}.

\paragraph{The model as a biorthogonal ensemble}

For general eigenvalues $a_1,\ldots, a_n$, the Harish-Chandra-Itzykson-Zuber formula can be used to express the joint probability distribution for the eigenvalues of the random matrices $M$ with distribution \eqref{prob} in the form \cite{Brezin-Hikami98, Harish-Chandra57, Itzykson-Zuber80}
\begin{equation}\label{jpdf0}
 \frac{1}{Z'_n} \det(e^{na_i\lambda_j})^n_{i,j=1} \, \prod_{i < j} (\lambda_j - \lambda_i) \, \prod_{j=1}^n e^{-nV(\lambda_j)}\ \prod_{j=1}^n d\lambda_j,
\end{equation}
where $Z'_n ={\rm const} \cdot Z_n \cdot \prod_{i<j} (a_j - a_i)$.
For our special choice of $A$, this becomes
\begin{equation}\label{jpdf}
  \frac{1}{Z'_n}\prod_{i<j} (\lambda_j - \lambda_i) \
  \prod_{i<j} (e^{\lambda_j}-e^{\lambda_i})\ \prod_{j=1}^n
  e^{-nV(\lambda_j)}\ \prod_{j=1}^n d\lambda_j,
\end{equation}
which is the biorthogonal enxemble \eqref{eq:biorthogonal} with $f(\lambda)=e^\lambda$ and varying weight function
\begin{equation} \label{eq:varying_weight}
  w(\lambda) = e^{-nV(\lambda)}.
\end{equation}

Let $p^{(n)}_i$ and $q^{(n)}_i$ be monic biorthogonal polynomials for the model as defined in \eqref{eq:biorth_poly}. Then, we have the specialization of \eqref{eq:pre_kernel}
\begin{equation} \label{kernel}
  K^{(n)}_n(x, y) = \sum^{n-1}_{j=0} \frac{1}{h_j^{(n)}} p_j^{(n)}(x)q_j^{(n)}(e^y) e^{-\frac{n}{2}(V(x) + V(y))}, \quad h^{(n)}_j = \int_{\mathbb R} p^{(n)}_j(x) q_j^{(n)}(e^x)e^{-nV(x)}dx,
\end{equation}
and we recall that the correlation kernel of the model can be expressed as $g(x) K^{(n)}_n(x, y) g(y)^{-1}$ for any nonzero function $g(x)$.

\paragraph{Goal of the paper}

Recently, variations of the Deift-Zhou steepest-descent method applied to vector-valued Riemann-Hilbert problems have been successfully applied to the asymptotic analysis of biorthogonal polynomials $p_j, q_j$ for some specific biorthogonal ensembles, like
\begin{enumerate}[label=(\roman*)]
\item 
  $f(x) = e^{ax}$, $I = (-\infty, +\infty)$, and $w(x) = e^{-nV(x)}$ with $V$ in a rather general class \cite{Claeys-Wang11},
\item
  $f(x) = x^{\theta}$ with $\theta \in (0, \infty)$, $I\subset(0,+\infty)$ compact, and $w(x)$ has $n$-independent Fisher-Hartwig singularities \cite{Charlier21}, and
\item
  $f(x) = x^{\theta}$ with integer $\theta$, $I = (0, +\infty)$, and $w(x) = x^{\alpha} e^{-nV(x)}$ with $V$ in a rather general class \cite{Wang-Zhang21}.
\end{enumerate}
These developments support our belief that the method will allow to asymptotically solve more general types of biorthogonal ensembles in the future.

For orthogonal polynomial ensembles defined by \eqref{eq:orthogonal_poly}, the asymptotics of the orthogonal polynomials almost immediately yield the asymptotics of the correlation kernel $K_n(x, y) = \sum^{n - 1}_{i = 0} \frac{1}{h_i} p_i(x) p_i(f(y)) \sqrt{w(x)w(y)}$, and then the ``universality'' of the ensemble is derived. This is essentially due to the celebrated Christoffel-Darboux formula:
\begin{equation}\label{eq:CD}
  \sum^{n - 1}_{i = 0} \frac{1}{h_i} p_i(x) p_i(y) = \frac{p_n(x) p_{n - 1}(y) - p_{n - 1}(x) p_n(y)}{h_n}.
\end{equation}
For many other determinantal point processes related to variations of orthogonal polynomials, like multiple orthogonal polynomials, the computation of the limit of the correlation kernel, which is equivalent to solving the universality of the process, is also achieved through an analogue of the Christoffel-Darboux type formula. However, a biorthogonal ensemble does not have a Christoffel-Darboux-like formula in general, so the asymptotics of biorthogonal polynomials do not directly entail the universality of the biorthogonal ensemble. For some determinantal point processes associated to classical (multiple) orthogonal polynomials, the correlation kernel has a double-contour integral formula, like our matrix model with quadratic $V(x)$ and the Muttalib-Borodin ensemble with linear $V(x)$. Of course, most biorthogonal ensembles, including our matrix model with non-quadratic $V(x)$, do not have a double-contour integral kernel formula.

Among all the ``universal'' correlation kernels occuring in $1$-dimensional determinantal point processes, the sine kernel in the bulk and the Airy kernel at the (soft) edge are the simplest ones, and also the most common ones. It is natural to expect that they also occur in the model considered in our paper.

The first successful computation of the limit of the correlation kernel without Christoffel-Darboux type formula or double-contour integral formula in a biorthogonal ensemble was done in \cite{Wang-Zhang21}. In \cite{Wang-Zhang21}, only the limiting kernel at the ``hard'' edge of the interval is computed, and it turns out that the hard edge case is technically simpler than the soft edge and bulk cases.
Another approach to derive bulk universality in orthoganal polynomial ensembles without relying on the Christoffel-Darboux formula was developed recently in \cite{Eichinger-Lukic-Simanek21,Swiderski-Trojan21, Swiderski-Trojan21a, Swiderski-Trojan22, Swiderski-Trojan22a}, and universality results in orthogonal polynomial ensembles using a perturbative approach were obtained in \cite{Lubinsky09}.

The main technical result of this paper is to prove the universality of the model introduced in \eqref{prob}--\eqref{eq:a_j_are_equispaced}, that is, the sine correlation kernel occurs in the bulk and the Airy correlation kernel occurs at the edges. All of our arguments are based on the asymptotics of $p^{(n)}_j$ and $q^{(n)}_j$ obtained in \cite{Claeys-Wang11}. Besides confirming the expected universality results for the model, our purpose is more on the methodology: we showcase two methods through which the asymptotics of the correlation kernel, or equivalently, the universality of the model, can be derived from the asymptotics of the biorthogonal polynomials. We hope to convince the reader that the asymptotics of biorthogonal polynomials derived by the Deift-Zhou steepest-descent method for vector-valued Riemann-Hilbert problems are essentially enough for the universality of the biorthogonal ensembles, with the help of arguments analogous to those in our paper. We also expect that arguments similar to ours can be applied to more complicated universal limiting kernels for other biorthogonal ensembles.

\subsubsection*{Statement of results}

Let $\mu$ be the unique equilibrium measure which minimizes
\begin{equation} \label{energy0}
  I(\mu) \equiv \frac{1}{2} \iint \log \frac{1}{\lvert y-x \rvert } d\mu(y)d\mu(x) + \frac{1}{2} \iint \log \frac{1}{\lvert e^y-e^x \rvert }  d\mu(y)d\mu(x) + \int V(x)d\mu(x)
\end{equation}
among all Borel probability measures $\mu$ supported on $\mathbb R$. If $V$ is strongly convex, it is shown in \cite{Claeys-Wang11} that $\mu$ is supported on a single interval $[a,b]$ and that it has a smooth density $\psi(x)$ there that vanishes like a square root function at the endpoints. To be precise,
\begin{equation} \label{psia}
  d\mu(x) = \psi(x)dx, \quad x \in (x, y) \quad \text{with} \quad
  \begin{aligned}
    \psi(x) \sim {}& \alpha(x-a)^{1/2}, & x \searrow & a, \\
    \psi(x) \sim {}& \beta(b-x)^{1/2}, & x \nearrow & b,
  \end{aligned}
\end{equation}
and $\alpha, \beta > 0$. The density $\psi$ and the constants $\alpha,\beta$ have explicit expressions which will be given in Section \ref{section: eq}. The measure $\mu$ describes the macroscopic behavior of the eigenvalues in \eqref{jpdf} as the dimension $n$ of the random matrices tends to infinity.

Our result is the following.
\begin{theorem}\label{theorem: univ}
  Let $V$ be strongly convex and real analytic on $\mathbb R$, and denote $[a,b]$ for the support of the equilibrium measure $\mu$.
  Fix $x^* \in (a, b)$. The correlation kernel $K_n$ defined in \eqref{kernel} has the following scaling limits:
  \begin{align}
    & \lim_{n\to\infty} \frac{e^\frac{F'(x^*)(\xi-\eta)}{\pi\psi(x^*)}}{\pi \psi(x^*)n}K^{(n)}_n\left(x^*+\frac{\xi}{\pi\psi(x^*)n}, x^*+\frac{\eta}{\pi\psi(x^*)n} \right) & = {}& \frac{\sin\pi(\xi-\eta)}{\pi(\xi-\eta)}, \label{limsin} \\
    & \lim_{n\to\infty} \frac{e^{n(F(u_n)-F(v_n))}}{(\pi \beta n)^{2/3}} K^{(n)}_n \left( b + \frac{\xi}{(\pi\beta n)^{\frac{2}{3}}}, b + \frac{\eta}{(\pi\beta n)^{\frac{2}{3}}} \right) & = {}& \frac{\Ai(\xi)\Ai'(\eta)-\Ai'(\xi)\Ai(\eta)}{\xi-\eta}, \label{limAiryb} \\
    & \lim_{n\to\infty} \frac{e^{n(F(\widehat u_n)-F(\widehat v_n))}}{(\pi \alpha n)^{2/3}}K^{(n)}_n \left( a - \frac{\xi}{(\pi\alpha n)^{\frac{2}{3}}}, a - \frac{\eta}{(\pi\alpha n)^{\frac{2}{3}}} \right) & = {}& \frac{\Ai(\xi)\Ai'(\eta)-\Ai'(\xi)\Ai(\eta)}{\xi-\eta}. \label{limAirya}
  \end{align}
  Here the density function $\psi$, the constants $\alpha$ and $\beta$ are defined by \eqref{psia}, the variables $u_n, v_n$ and $\widehat u_n, \widehat v_n$ depend on $\xi, \eta$ as
  \begin{equation}
    u_n=b+ \frac{\xi}{(\pi\beta n)^{2/3}},\quad  
    v_n=b+ \frac{\eta}{(\pi\beta n)^{2/3}},\quad  \widehat u_n=a- \frac{\xi}{(\pi\alpha n)^{2/3}}, \quad \widehat v_n=a- \frac{\eta}{(\pi\alpha n)^{2/3}},
  \end{equation}
  and the function $F$ is defined as
\begin{equation} \label{eq:defn_F}
  F(x) = \frac{1}{2} \int \log \left\lvert \frac{e^x - e^y}{x-y} \right\rvert d\mu(y).
\end{equation}
\end{theorem}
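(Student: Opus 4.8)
The plan is to start from the representation \eqref{kernel} of the kernel as a bilinear sum over the biorthogonal polynomials $p^{(n)}_j$ and $q^{(n)}_j$, and to exploit the Deift--Zhou asymptotics of these polynomials obtained in \cite{Claeys-Wang11}. Since no Christoffel--Darboux identity is available, I would \emph{not} try to telescope the sum \eqref{kernel} directly; instead I would realize $K^{(n)}_n$ as a contour integral by interpreting the partial sum $\sum_{j=0}^{n-1}$ as a residue-count. Concretely, using that $p^{(n)}_j$ and $q^{(n)}_j$ are entries (or linear combinations of entries) of the solution of a vector-valued Riemann--Hilbert problem $Y=Y_n$, one expresses
$K^{(n)}_n(x,y)$ as a double integral $\frac{1}{(2\pi i)^2}\oint\oint$ of a ratio built from $Y_n(s)^{-1}Y_n(t)$, with the two contours encircling the relevant cut $[a,b]$ (\resp{} its image $[e^a,e^b]$ under $f(\lambda)=e^\lambda$). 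This is the standard device by which a finite bilinear sum in reproducing-kernel form is turned into an object to which steepest descent applies, and it replaces both the Christoffel--Darboux formula and the ad hoc double-contour formula that exists only for quadratic $V$.

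Next I would substitute the large-$n$ parametrix for $Y_n$ into this contour representation. On each regime one uses a different local model: in the bulk near $x^*\in(a,b)$ the outer parametrix (built from the $g$-function and the equilibrium measure $\mu$, with phase $e^{n(\text{something})}$) governs the integrand, and after the rescaling $x=x^*+\xi/(\pi\psi(x^*)n)$, $y=x^*+\eta/(\pi\psi(x^*)n)$ the stationary-phase points collide and the saddle-point evaluation produces exactly the oscillatory exponentials $e^{\pm i\pi(\xi-\eta)}$ whose difference is the sine kernel; the prefactor $e^{F'(x^*)(\xi-\eta)/(\pi\psi(x^*))}$ and the Jacobian $1/(\pi\psi(x^*)n)$ are precisely what is needed to absorb the conjugating function $g(x)=e^{nF(x)}$ (recall the kernel is only defined up to conjugation $g(x)K^{(n)}_n g(y)^{-1}$) and the local change of variables. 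Near the soft edges $b$ and $a$ one instead inserts the Airy parametrix of \cite{Claeys-Wang11}; with the scaling by $(\pi\beta n)^{-2/3}$ (\resp{} $(\pi\alpha n)^{-2/3}$, and $\psi$ vanishing like a square root there with constants $\beta,\alpha$ as in \eqref{psia}) the contour integral converges to the standard double-integral representation of the Airy kernel $\frac{\Ai(\xi)\Ai'(\eta)-\Ai'(\xi)\Ai(\eta)}{\xi-\eta}$, and the exponential factors $e^{n(F(u_n)-F(v_n))}$, $e^{n(F(\widehat u_n)-F(\widehat v_n))}$ again account for the conjugation freedom. The constants $\alpha,\beta$ and the function $F$ entering the statement are exactly the ones dictated by the local coordinates of the Airy parametrix and the definition \eqref{eq:defn_F}.

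I expect the main obstacle to be the passage from the bilinear sum \eqref{kernel} to a workable contour representation: unlike the orthogonal-polynomial case, the ``dual'' family $q^{(n)}_j(f(y))$ lives on the $f$-side, so one must carefully match the Cauchy transforms of $p^{(n)}_j$ and of $q^{(n)}_j\circ f$ against the \emph{same} Riemann--Hilbert solution, keeping track of the two distinct two-point interactions $|\lambda_j-\lambda_i|$ and $|e^{\lambda_j}-e^{\lambda_i}|$. Getting the contours, their orientations, and the analytic continuation of the integrand correct — in particular making sure the residue sum truly reproduces $\sum_{j=0}^{n-1}$ and not a shifted or reflected range — is the delicate bookkeeping step. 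Once that representation is in hand, the three limits are parallel: one deforms each contour through the relevant saddle(s), inserts the corresponding (outer or Airy) parametrix with controlled error $O(n^{-1})$ (or $O(n^{-1/3})$ near the edges) uniformly for $\xi,\eta$ in compacts, and reads off the limiting kernel. A secondary technical point is uniformity of the parametrix estimates as the spectral parameter approaches the endpoints, which is needed so that the edge scalings interact correctly with the transition region between bulk and edge parametrices; this is handled exactly as in the standard one-cut analysis, using that $\psi$ has the clean square-root vanishing \eqref{psia}.
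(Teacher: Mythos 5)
Your proposal has a fundamental gap: the entire plan rests on converting the bilinear sum \eqref{kernel} into a double-contour integral built from the Riemann--Hilbert solution, but the paper explicitly states (in the abstract, and again in the ``Goal of the paper'' discussion) that precisely such a representation is \emph{not} available for this model with non-quadratic $V$. The reason is structural, not bookkeeping. For an orthogonal-polynomial ensemble, the contour representation you describe works because the Christoffel--Darboux telescoping collapses $\sum_{j=0}^{n-1}$ into a two-term expression involving only $Y_n$ at the top degree, which can then be written as a residue. For a biorthogonal ensemble, that telescoping simply does not exist: the sum $\sum_{j=0}^{n-1}\frac{1}{h_j}p_j(x)q_j(e^y)$ does not telescope, and the Riemann--Hilbert problem underlying \cite{Claeys-Wang11} is a vector-valued (not full matrix) problem in which a formula for $K^{(n)}_n$ in terms of $Y_n(s)^{-1}Y_n(t)$ is not known. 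You flag ``making sure the residue sum truly reproduces $\sum_{j=0}^{n-1}$'' as a delicate step; this is in fact the step that fails, and it is the reason the paper exists.

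The paper's route is different in kind, and avoids contour representations of the kernel entirely. For the Airy limits \eqref{limAiryb}--\eqref{limAirya}, it splits $\sum_{j=0}^{n-1}$ into four ranges in $j$, kills the low- and middle-degree blocks by exponential estimates (using zero-location for small $j$, outer asymptotics and the variational inequality \eqref{var ineq} for intermediate $j$, and edge decay for $j$ slightly below $n$), and shows directly that the top block $K_n^{(4)}$ becomes a Riemann sum for $\int_0^{\infty}\Ai(\xi+y)\Ai(\eta+y)\,dy$, at which point the identity \eqref{eq:AiryCD} — a continuous analogue of Christoffel--Darboux — gives the Airy kernel. This requires the $t$-dependent equilibrium measures $\mu_{j/n}$, the drift $b'(1)$ of the right endpoint, and uniformity of the asymptotics in $t=j/n$, none of which feature in your saddle-point picture. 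For the sine limit \eqref{limsin}, the paper multiplies the kernel by $\exp_{\lfloor\delta n\rfloor}(u)-\exp(v)$ and expands each of $\exp_{\lfloor\delta n\rfloor}(x)\widetilde p_j^{(n)}(x)$ and $\exp(y)\widetilde q_j^{(n)}(y)$ back in the biorthogonal families, producing a double sum \eqref{eq:tri_division} that plays the role of an \emph{approximate} Christoffel--Darboux identity: there is no exact cancellation, but one proves $J_1\to 0$ and that only finitely many terms of $J_2$ near $(j,k)=(n-1,n)$ contribute, and then a closed-form residue computation (Lemma \ref{lemma sine3}) combined with the bulk asymptotics evaluates that finite block to $\frac{e^{x^*}}{\pi}\sin\pi(\xi-\eta)$. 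Even if you could somehow obtain a contour formula, the bulk phase analysis you sketch would miss this algebraic step, which is where the specific structure of $f(\lambda)=e^\lambda$ (via the coefficients $a_{j,k}$ and the identities \eqref{eq:J(I(x))}) enters essentially. In short: your approach is the ``standard'' one that the paper is explicitly demonstrating cannot be used here, and you would need to replace it wholesale with a sum-splitting plus approximate-CD argument of the type the paper actually carries out.
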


\subsubsection*{Outline and heuristics of the proofs}

In Section \ref{section: eq}, we will collect general results about the equilibrium measure $\mu$ and about the biorthogonal polynomials $p_j^{(n)}$ and $q_j^{(n)}$. Most of those results are taken from \cite{Claeys-Wang11}, or are consequences of results in that paper. In Section \ref{section: Airy}, we will prove the Airy kernel limit \eqref{limAiryb}, and in Section \ref{section: sine}, we will prove the sine kernel limit \eqref{limsin}. 

In the Airy case, the well-known identity
\begin{equation}\label{eq:AiryCD}
  \int_0^{\infty} \Ai(\xi + y)\Ai(\eta + y) dy = \frac{\Ai(\xi)\Ai'(\eta)-\Ai'(\xi)\Ai(\eta)}{\xi - \eta},
\end{equation}
which should be seen as a continuous analogue of the Christoffel-Darboux formula for orthogonal polynomials, is of crucial importance. Instead of proving convergence of the correlation kernel to the right hand side of \eqref{eq:AiryCD}, we directly prove that the sum in \eqref{kernel} converges to the integral on the left hand side of \eqref{eq:AiryCD}. This method is similar to the one from \cite{Wang-Zhang21} for the hard edge case.

For the sine kernel, we will write the correlation kernel as a double sum which can be thought of as an analogue of a Christoffel-Darboux formula. Although no exact cancellations take place in this double sum, it will turn out that only few terms contribute to its large $n$ limit, such that, at least asymptotically, it is reminiscent of the right hand side of \eqref{eq:CD}.

\section{Equilibrium measure and asymptotics for biorthogonal polynomials}\label{section: eq}

\subsection{Equilibrium measure}

In this section, we collect some properties about the equilibrium measure $\mu_t$ defined as the unique Borel probability measure on $\mathbb R$ minimizing 
\begin{equation} \label{energy}
  I(\mu) \equiv  \frac{1}{2} \iint \log \frac{1}{\lvert y-x \rvert } d\mu(y)d\mu(x) + \frac{1}{2} \iint \log \frac{1}{\lvert e^y-e^x \rvert }  d\mu(y)d\mu(x) +\frac{1}{t} \int V(x)d\mu(x).
\end{equation}
For $t=1$, $\mu_1 = \mu$, the equilibrium measure in \eqref{energy0} and \eqref{psia}. If $V$ is strongly convex, then so is $\frac{1}{t}V(x)$, and this means that we can use the results obtained in \cite{Claeys-Wang11} about the equilibrium measure for any $t>0$. Those results can be summarized as follows.

Set
\begin{equation}\label{sab}
  \Jlike_{c_1, c_0}(s) = c_1s + c_0 - \log\frac{s-\frac{1}{2}}{s+\frac{1}{2}},\quad \mathbb C\setminus\left[-\frac{1}{2},\frac{1}{2}\right],
\end{equation}
with $c_1 >0$, $c_0 \in \realR$, and the logarithm corresponding to arguments between $-\pi$ and $\pi$. 
For any $c_1 >0$, $c_0 \in \realR$,  $\Jlike_{c_1,c_0}(s)$ has two critical points $s^-(c_1)<s^+(c_1)$ on the real line, given by (cf.~\cite[(1.24)]{Claeys-Wang11}
\begin{equation} \label{eq:s_b_in_c_10}
s^\pm(c_1)=\pm\sqrt{\frac{1}{4}+\frac{1}{c_1}}.
\end{equation}
The inverse image of the interval $[\Jlike_{c_1,c_0}(s^-(c_1)), \Jlike_{c_1,c_0}(s^+(c_1))]$ under $\Jlike$ consists of two complex conjugate curves $\gamma_1=\gamma_1(c_1), \gamma_2=\gamma_2(c_1)$. We write $\gamma=\gamma(c_1)$ for the counterclockwise union of $\gamma_1$ and $\gamma_2$. 

For any $t > 0$, there are $c_0(t)\in\mathbb R$ and $c_1(t)>0$ uniquely determined  by (see \cite[Lemma 2]{Claeys-Wang11})
\begin{align}
  1= {}& \frac{c_1(t)}{2\pi i t }\int_{\gamma(c_1(t)} V'(\Jlike_{c_1(t),c_0(t)}(s))ds, \label{intro mod eq 1} \\
  1= {}& \frac{1}{2\pi i t}\int_{\gamma(c_1(t))} \frac{V'(\Jlike_{c_1(t),c_0(t)}(s))}{s-\frac{1}{2}}ds. \label{intro mod eq 2}
\end{align}
In the subsequent part of the paper, we write $\Jlike(s) := \Jlike_{c_1(t), c_0(t)}(s)$, $\gamma:=\gamma(c_1(t))$ when there is no possible confusion, hence suppressing the dependence on $t$ in our notations.

The equilibrium measure $\mu_t$ is supported on a single interval $[a(t),b(t)]$, and the endpoints are given by
\begin{equation} \label{def:ab}
  a(t)=\Jlike(s_a(t)),\qquad b(t)=\Jlike(s_b(t)),\qquad s_a(t):=s^-(c_1(t)),\qquad s_b(t):=s^+(c_1(t)),
\end{equation}
or equivalently in terms of $c_0(t), c_1(t)$ by (cf.~\cite[(1.21)--(1.22)]{Claeys-Wang11})
\begin{gather}
  c_0(t) = {} \frac{b(t) + a(t)}{2}, \label{eq:first_def_of_c_0} \\
 c_1(t) s_b(t) - \log \frac{s_b(t) - \frac{1}{2}}{s_b(t) + \frac{1}{2}}=\frac{b(t) - a(t)}{2}. \label{eq:first_def_of_c_1}
\end{gather} 
By taking derivatives on both sides of \eqref{eq:first_def_of_c_0} and \eqref{eq:first_def_of_c_1}, we obtain
\begin{equation} \label{eq:deri_a_b}
  a'(t)=c_0'(t) - c_1'(t)s_b(t), \quad b'(t)=c_0'(t) + c_1'(t)s_b(t).
\end{equation}  

Let us denote $\Jinv_+$ (resp.\ $\Jinv_-$) for the inverse of $\Jlike$ which maps $[a(t),b(t)]$ to $\gamma_1\subset\mathbb C^+$ (resp.\ $\gamma_2\subset\mathbb C^-$). The equilibrium measure $\mu_t$ is supported on $[a(t), b(t)]$ with a continous density $\psi_t$ that is given by (see \cite[Theorem 3]{Claeys-Wang11})
\begin{equation} \label{eq:density_psi_t}
  \psi_t(x) = \frac{1}{2\pi^2}\int^{b(t)}_{a(t)} V''(u)\log\left\lvert \frac{\Jinv_+(u) - \Jinv_-(x)}{\Jinv_+(u) - \Jinv_+(x)}\right\rvert du.
\end{equation}
It follows after a straightforward calculation that
\begin{align}
  \psi_t(x) = {}& \alpha(t)(x-a(t))^{1/2} (1 + \bigO(x - a(t)), & x \searrow & a(t), \label{psita} \\
  \psi_t(x) = {}& \beta(t)(b(t)-x)^{1/2} (1 + \bigO(x - b(t)), & x \nearrow & b(t), \label{psitb}
\end{align}
with $\alpha(t)$ and $\beta(t)$ given by 
\begin{align}
  \alpha(t) = {}& -\frac{1}{\pi^2c_1(t) \lvert s_a(t)\rvert^{1/2}} \int^{b(t)}_{a(t)} V''(y) \Im\frac{1}{\Jinv_+(y) - s_a(t)} dy > 0, \\
  \beta(t) = {}& -\frac{1}{\pi^2 c_1(t) \lvert s_b(t)^{1/2} \rvert} \int_{a(t)}^{b(t)} V''(y) \Im\frac{1}{\Jinv_+(y)-s_b(t)} dy > 0.
\end{align}
After the change of variables $y=\Jlike(s)$, noting that $s_a(t) = -s_b(t)$ and that 
\[\Jlike'(s)=c_1(t)(s\pm s_b(t))\left(\frac{\frac{1}{2}\mp s_b(t)}{s-\frac{1}{2}}+\frac{\frac{1}{2}\pm s_b(t)}{s+\frac{1}{2}}\right)
  ,\]
and introducing the integrals
\begin{equation}\label{PQ}
  P_t = \frac{1}{2\pi i} \int_\gamma \frac{V''(\Jlike_{c_1(t), c_0(t)}(s))}{s - \frac{1}{2}} ds, \quad Q_t = \frac{1}{2\pi i}\int_\gamma \frac{V''(\Jlike_{c_1(t), c_0(t)}(s))}{s + \frac{1}{2}} ds,
\end{equation}  
we can rewrite $\alpha(t), \beta(t)$ as 
\begin{equation} \label{alpha}
  \begin{split}
    \alpha(t) = {}& \frac{1}{\pi s_b(t)^{1/2}}\left( \left( \frac{1}{2}-s_b(t) \right) P_t + \left( \frac{1}{2}+s_b(t) \right) Q_t \right) \\
    = {}& \frac{\frac{1}{2}-s_b(t)}{\pi s_b(t)^{1/2}} \left( P_t - \left( 1 + \frac{c_1(t)}{2} + c_1(t) s_b(t) \right) Q_t \right), 
  \end{split}
\end{equation}
\begin{equation} \label{beta}
  \begin{split}
    \beta(t) = {}& \frac{1}{\pi s_b(t)^{1/2}}\left( \left( \frac{1}{2}+s_b(t) \right) P_t + \left( \frac{1}{2}-s_b(t) \right) Q_t \right) \\
    = {}& \frac{\frac{1}{2}+s_b(t)}{\pi s_b(t)^{1/2}} \left( P_t - \left( 1 + \frac{c_1(t)}{2} - c_1(t) s_b(t) \right) Q_t \right).
  \end{split}
\end{equation}

We first establish the monotonicity of the endpoints $a(t), b(t)$ as a function of $t$.
\begin{lemma} \label{lemma endpoints}
For any $t > 0$, the equilibrium measure $\mu_t$ is supported on the interval $[a(t), b(t)]$. Moreover, $a(t)$ and $b(t)$ depend smoothly on $t$ and
  \begin{equation}\label{a1b1der}
    a'(t) = \frac{\frac{1}{2} - s_b(t)}{\pi \alpha(t)s_b(t)^{1/2}}<0,\quad b'(t) = \frac{\frac{1}{2} + s_b(t)}{\pi \beta(t)s_b(t)^{1/2}}>0.
  \end{equation}
\end{lemma}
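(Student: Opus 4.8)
The claim that $\mu_t$ is supported on $[a(t),b(t)]$ and that $a,b$ depend smoothly on $t$ is already recorded in the text leading up to the lemma (it is the content of the results quoted from \cite{Claeys-Wang11}, together with the implicit function theorem applied to \eqref{intro mod eq 1}--\eqref{intro mod eq 2}, which gives smoothness of $c_0(t),c_1(t)$ and hence of $a(t)=\Jlike(s_a(t))$, $b(t)=\Jlike(s_b(t))$). So the real work is to establish the two displayed formulas for $a'(t)$ and $b'(t)$ in \eqref{a1b1der} and to read off the sign. My plan is to start from \eqref{eq:deri_a_b}, namely $a'(t)=c_0'(t)-c_1'(t)s_b(t)$ and $b'(t)=c_0'(t)+c_1'(t)s_b(t)$, and to compute $c_0'(t)$ and $c_1'(t)$ by differentiating the two defining equations \eqref{intro mod eq 1}--\eqref{intro mod eq 2} in $t$.

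Concretely, I would rewrite \eqref{intro mod eq 1}--\eqref{intro mod eq 2} as $t = \frac{c_1}{2\pi i}\int_\gamma V'(\Jlike)\,ds$ and $t = \frac{1}{2\pi i}\int_\gamma \frac{V'(\Jlike)}{s-\frac12}\,ds$, differentiate both sides with respect to $t$, and use $\partial_t \Jlike_{c_1,c_0}(s) = c_1'(t)s + c_0'(t)$ together with $\partial_{c_1}(\gamma)$-contributions vanishing because the integrand is analytic across $\gamma$ deformations (the endpoints of $\gamma$ are critical points of $\Jlike$, so boundary terms drop — this is the standard trick). After differentiation, the integrals that appear are exactly combinations of $P_t$ and $Q_t$ defined in \eqref{PQ}, because $V''(\Jlike)\,\partial_t\Jlike = V''(\Jlike)(c_1's+c_0')$ and one expands $s = (s-\tfrac12)+\tfrac12$ etc. Solving the resulting $2\times 2$ linear system for $c_0'(t),c_1'(t)$ in terms of $P_t,Q_t$ and substituting into \eqref{eq:deri_a_b} should, after the algebra, collapse to precisely the right-hand sides of \eqref{alpha} and \eqref{beta} divided by the appropriate factors — that is, $a'(t) = (\tfrac12 - s_b(t))/(\pi\alpha(t)s_b(t)^{1/2})$ and $b'(t) = (\tfrac12 + s_b(t))/(\pi\beta(t)s_b(t)^{1/2})$.

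The sign statement is then immediate: $s_b(t) = s^+(c_1(t)) = \sqrt{\tfrac14 + \tfrac1{c_1(t)}} > \tfrac12$ since $c_1(t)>0$ by \eqref{eq:s_b_in_c_10}, so $\tfrac12 - s_b(t) < 0$ and $\tfrac12 + s_b(t) > 0$; since $\alpha(t),\beta(t)>0$ and $s_b(t)^{1/2}>0$, we get $a'(t)<0$ and $b'(t)>0$, as claimed. The main obstacle I anticipate is purely computational: correctly tracking the contour-differentiation (making sure the variation-of-$\gamma$ terms really cancel, which hinges on $s_a,s_b$ being critical points of $\Jlike$) and then not making an algebraic slip when matching the solved expressions for $c_0',c_1'$ against the two equivalent forms of $\alpha(t),\beta(t)$ in \eqref{alpha}--\eqref{beta}. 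A useful consistency check along the way is that $b'(t) - a'(t)$ and $b'(t)+a'(t)$ should match the $t$-derivatives of \eqref{eq:first_def_of_c_1} and \eqref{eq:first_def_of_c_0} respectively, which gives an independent handle on $c_1'(t)$ and $c_0'(t)$.
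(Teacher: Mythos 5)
Your plan is essentially the paper's own proof: differentiate the defining relations \eqref{intro mod eq 1}--\eqref{intro mod eq 2} with respect to $t$, solve the resulting $2\times 2$ system for $c_0'(t),c_1'(t)$ in terms of $P_t,Q_t$, substitute into \eqref{eq:deri_a_b}, recognize the right-hand sides of \eqref{alpha}--\eqref{beta}, and read off signs from $s_b(t)=\sqrt{1/4+1/c_1(t)}>1/2$. The only cosmetic difference is that the paper packages the differentiation as a Jacobian computation for the map $F$ of \eqref{def F}, checking $\det\partial(U,V)/\partial(u,v)=\pi s_b(t)\alpha(t)\beta(t)>0$ and then inverting; also, your worry about contour-variation terms is unnecessary here since $\gamma$ is a closed contour around $[-1/2,1/2]$ and the integrands are analytic off that slit, so one simply holds $\gamma$ fixed (no boundary-term argument at $s_a,s_b$ is needed).
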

\begin{proof}
  The fact that the support of $\mu_t$ consists of a single interval for strongly convex $V$ was proved in \cite{Claeys-Wang11}.
  Let the mapping $F: (u, v) \to (U, V)$ be defined by 
  \begin{equation} \label{def F}
    (U, V) = F(u, v) = \left(\frac{u}{2\pi i }\int_\gamma V'(\Jlike_{u, v}(s))ds, \, \frac{1}{2\pi i }\int_\gamma \frac{V'(\Jlike_{u, v}(s))}{s-\frac{1}{2}}ds\right).
  \end{equation}
 The Jacobian matrix is given by
  \begin{equation} \label{Jacobian}
    \frac{\partial(U, V)}{\partial(u, v)} =
    \begin{bmatrix}
      \frac{1}{2\pi i }\int_\gamma [V'(\Jlike_{u, v}(s)) + V''(\Jlike_{u, v}(s)s] ds & \frac{1}{2\pi i }\int_\gamma u V''(\Jlike_{u, v}(s))ds \\
      \frac{1}{2\pi i }\int_\gamma V''(\Jlike_{u, v}(s) \frac{s}{s - \frac{1}{2}} ds & \frac{1}{2\pi i }\int_\gamma V''(\Jlike_{u, v}(s) \frac{1}{s - \frac{1}{2}} ds
    \end{bmatrix}.
  \end{equation}
  Noting that
  \begin{multline}
    \frac{1}{2\pi i }\int_\gamma \left[ V'(\Jlike_{u, v}(s)) + V''(\Jlike_{u, v}(s))s \right] ds - \frac{1}{2\pi i }\int_\gamma \frac{1}{2} V''(\Jlike_{u, v}(s)) \left( \frac{1}{s - \frac{1}{2}} + \frac{1}{s + \frac{1}{2}} \right) ds \\
    = \frac{1}{2\pi i }\int_\gamma \frac{d}{ds}[V'(\Jlike_{u, v}(s))s] ds = 0,
  \end{multline}
  \begin{multline}
    \frac{1}{2\pi i} \int_{\gamma} V''(\Jlike_{u, v}(s)) ds + \frac{1}{2\pi i }\int_\gamma \frac{1}{u} V''(\Jlike_{u, v}(s)) \left( \frac{1}{s + \frac{1}{2}} - \frac{1}{s + \frac{1}{2}} \right) ds \\
    = \frac{1}{2\pi i }\int_\gamma \frac{d}{ds}[u^{-1} V'(\Jlike_{u, v}(s))] ds = 0,
  \end{multline}
we have
  \begin{equation}
    \left. \frac{\partial(U, V)}{\partial(u, v)} \right\rvert_{u = c_1(t), \, v = c_0(t)} =
    \begin{bmatrix}
      \frac{1}{2}(P_t + Q_t) & P_t - Q_t \\
      \frac{1}{c_1(t)}(P_t - Q_t) + \frac{1}{2}P_t & P_t
    \end{bmatrix},
  \end{equation}
  and, with the help of expressions \eqref{alpha} and \eqref{beta} for $\alpha(t)$ and $\beta(t)$, we have
  \begin{equation}
    \left. \det \frac{\partial(U, V)}{\partial(u, v)} \right\rvert_{u = c_1(t), \, v = c_0(t)} = P_tQ_t - \frac{1}{c_1(t)}(P_t - Q_t)^2  = \pi s_b(t) \alpha(t) \beta(t) > 0.
  \end{equation}
  By \eqref{intro mod eq 1} and \eqref{intro mod eq 2}, $c_1(t)$ and $c_0(t)$ are defined by the relation $F(c_1(t),c_0(t))=(t,t)$, and they are smooth functions of $t$ as the Jacobian of $F$ is non-singular at $(c_1(t),c_0(t))$. Moreover, taking derivatives with respect to $t$ in $F(c_1(t),c_0(t))=(t,t)$, by \eqref{def F}, we have
  \begin{equation}
    c_1'(t) = \frac{Q_t}{P_tQ_t - \frac{1}{c_1(t)}(P_t - Q_t)^2}, \quad c_0'(t) = \frac{\frac{Q_t}{2} - \frac{1}{c_1(t)}(P_t - Q_t)}{P_tQ_t - \frac{1}{c_1(t)}(P_t - Q_t)^2}.
  \end{equation}
  At last, by \eqref{eq:deri_a_b}, we have
  \begin{align}
    a'(t) = {}& \frac{Q_t \left(\frac{1}{2} - s_b(t) + \frac{1}{c_1(t)} \right) - \frac{1}{c_1(t)}P_t}{P_tQ_t - \frac{1}{c_1(t)}(P_t - Q_t)^2} = \frac{1}{P_t - (1 + \frac{c_1(t)}{2} + c_1(t)s_b(t))Q_t}, \\
    b'(t) = {}& \frac{Q_t \left(\frac{1}{2} + s_b(t) + \frac{1}{c_1(t)} \right) - \frac{1}{c_1(t)}P_t}{P_tQ_t - \frac{1}{c_1(t)}(P_t - Q_t)^2} = \frac{1}{P_t - (1 + \frac{c_1(t)}{2} - c_1(t)s_b(t))Q_t}.
  \end{align}
  Recalling the expressions \eqref{alpha} and \eqref{beta} for $\alpha(t),\beta(t)$, we obtain \eqref{a1b1der}.
\end{proof}

The next lemma is about the position of the endpoints $a(t), b(t)$ of the support of the equilibrium measure $\mu_t$. Since we assume the external field $V$ is strongly convex, it has a unique absolute minimum that we denote as $x_{\min}$.  The external field $V(x)-tx$ has also a unique minimum, which we denote as $\widehat x_{\min}(t)$. Observe that $\widehat{x}_{\min}(t)$ is an increasing function of $t$, that $\widehat x_{\min}(t)>x_{\min}$ for $t>0$, and that $\lim_{t\to 0}\widehat x_{\min}(t)=x_{\min}$.
\begin{lemma}\label{lemma zeros}
  \begin{enumerate}[label=(\alph*)]
  \item \label{enu:lemma zeros_1}
    For any $t > 0$, $x_{\min}<b(t)$, $a(t)<\widehat x_{\min}(t)$. 
  \item \label{enu:lemma zeros_1a}
    $\lim_{t\searrow 0} a(t) = \lim_{t\searrow 0} b(t) = x_{\min}$.
  \end{enumerate}
\end{lemma}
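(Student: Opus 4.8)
The plan is to prove part \ref{enu:lemma zeros_1} using the variational (Euler--Lagrange) conditions satisfied by the equilibrium measure $\mu_t$, together with the monotonicity of the endpoints established in Lemma \ref{lemma endpoints}, and then to obtain part \ref{enu:lemma zeros_1a} by combining part \ref{enu:lemma zeros_1} with that same monotonicity.

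For part \ref{enu:lemma zeros_1}, recall that $\mu_t$ minimizes the energy functional $I$ in \eqref{energy}, so it satisfies Euler--Lagrange conditions of the form
\begin{equation}
  \int \log\frac{1}{|x-y|}\,d\mu_t(y) + \int \log\frac{1}{|e^x-e^y|}\,d\mu_t(y) + \frac{1}{t}V(x)
  \begin{cases} = \ell_t, & x\in[a(t),b(t)],\\ \geq \ell_t, & x\in\mathbb{R}, \end{cases}
\end{equation}
for some constant $\ell_t$. Differentiating the equality on $(a(t),b(t))$ gives $-2\,\mathrm{p.v.}\!\int\frac{d\mu_t(y)}{x-y} - \int\frac{e^x\,d\mu_t(y)}{e^x-e^y} + \frac{1}{t}V'(x) = 0$ on the support. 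I would then argue that the function $x\mapsto \frac{1}{t}V'(x) - \int\frac{e^x}{e^x-e^y}d\mu_t(y)$ controls the sign of the effective field at the endpoints. A cleaner route: the density $\psi_t$ vanishes like a square root at $a(t)$ and $b(t)$ with positive constants $\alpha(t),\beta(t)>0$ (equations \eqref{psita}--\eqref{psitb}), which forces the one-sided derivative of the (smooth) effective potential to be strictly positive just outside $[a(t),b(t)]$ and strictly negative just inside. Evaluating the Euler--Lagrange identity at $x=b(t)$ and at $x=a(t)$, and using that $\int\frac{e^x}{e^x-e^y}d\mu_t(y)$ is real and, at an endpoint, a convergent principal-value-free integral, I would deduce inequalities relating $V'(a(t))$, $V'(b(t))$ to $t$ and to moments of $\mu_t$. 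Concretely, since $V$ is strongly convex, $V'$ is strictly increasing, $V'(x_{\min})=0$, and $V'(\widehat x_{\min}(t))=t$; I want to show $V'(b(t))>0=V'(x_{\min})$ and $V'(a(t))<t=V'(\widehat x_{\min}(t))$, which give $b(t)>x_{\min}$ and $a(t)<\widehat x_{\min}(t)$ by monotonicity of $V'$. The sign of $V'(b(t))$ should come from the fact that the "attraction'' terms $\int\frac{d\mu_t(y)}{b(t)-y}>0$ and $\int\frac{e^{b(t)}}{e^{b(t)}-e^y}d\mu_t(y)>0$ (both integrands positive since $y<b(t)$ on the support) must be balanced by $\frac{1}{t}V'(b(t))$, forcing $V'(b(t))>0$; symmetrically at $a(t)$ the integrands $\int\frac{d\mu_t(y)}{a(t)-y}<0$ and $\int\frac{e^{a(t)}}{e^{a(t)}-e^y}d\mu_t(y)<0$, and the requirement that the total field vanish gives $\frac{1}{t}V'(a(t)) = \text{(negative)} + \int\frac{e^{a(t)}}{e^{a(t)}-e^y}d\mu_t(y)<\int\frac{e^{a(t)}}{e^{a(t)}-e^y}d\mu_t(y)<1$, hence $V'(a(t))<t$. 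I expect the main obstacle to be making the endpoint evaluation of the principal-value integral $\mathrm{p.v.}\!\int\frac{d\mu_t(y)}{x-y}$ rigorous at $x=a(t),b(t)$ — one must use the square-root vanishing of $\psi_t$ to see that this integral has a finite one-sided limit with the expected sign, and confirm that no hidden cancellation occurs between the two interaction terms.

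For part \ref{enu:lemma zeros_1a}, I would use that by Lemma \ref{lemma endpoints} the endpoints are monotone: $a(t)$ is strictly decreasing and $b(t)$ strictly increasing in $t>0$, so the limits $\lim_{t\searrow 0}a(t)$ and $\lim_{t\searrow 0}b(t)$ exist in $[-\infty,\infty]$ and satisfy $\lim_{t\searrow 0}a(t)\geq a(1)$ is not what we want — rather, as $t\searrow 0$, $a(t)$ increases and $b(t)$ decreases, so $\lim_{t\searrow 0}a(t) = \sup_{t>0}a(t)$ and $\lim_{t\searrow 0}b(t)=\inf_{t>0}b(t)$, and by part \ref{enu:lemma zeros_1}, $a(t)<\widehat x_{\min}(t)$ with $\widehat x_{\min}(t)\to x_{\min}$, so $\lim_{t\searrow 0}a(t)\leq x_{\min}$; and $b(t)>x_{\min}$ gives $\lim_{t\searrow 0}b(t)\geq x_{\min}$. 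It remains to get the reverse inequalities, i.e. $\lim_{t\searrow 0}a(t)\geq x_{\min}$ and $\lim_{t\searrow 0}b(t)\leq x_{\min}$; equivalently, the support interval shrinks to the single point $x_{\min}$. This I would obtain from the explicit parametrization \eqref{def:ab}: $b(t)-a(t) = 2\bigl(c_1(t)s_b(t) - \log\frac{s_b(t)-1/2}{s_b(t)+1/2}\bigr)$ by \eqref{eq:first_def_of_c_1}, so it suffices to show $c_1(t)\to 0$ and $s_b(t)\to\infty$ appropriately, or directly that $b(t)-a(t)\to 0$, as $t\searrow 0$. From the defining equations \eqref{intro mod eq 1}--\eqref{intro mod eq 2}, as $t\to 0$ the right-hand sides blow up unless $c_1(t)\to 0$; a short asymptotic analysis of $\Jlike_{c_1,c_0}$ as $c_1\to 0$ (where $s^\pm(c_1)=\pm\sqrt{1/4+1/c_1}\to\pm\infty$ and $\Jlike$ degenerates) shows the support length $\to 0$, and then $c_0(t)=\tfrac{b(t)+a(t)}{2}$ together with $V'(a(t))<t\to 0$, $V'(b(t))>0$ pins the common limit at $x_{\min}$. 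The potential obstacle here is controlling the degenerate $c_1\to 0$ limit of the integrals in \eqref{intro mod eq 1}--\eqref{intro mod eq 2}; if that proves delicate, an alternative is a soft compactness argument — any subsequential weak-$*$ limit of $\mu_t$ as $t\searrow 0$ must be a point mass at the minimizer of $V$, namely $\delta_{x_{\min}}$, because the $\frac{1}{t}\int V\,d\mu$ term dominates — which immediately forces $[a(t),b(t)]\to\{x_{\min}\}$.
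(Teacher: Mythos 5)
Your proposal is correct, and for part \ref{enu:lemma zeros_1} it takes a genuinely different route from the paper's. The paper argues by translation: if $b(t)\le x_{\min}$, shift $\mu_t$ to the right by a small $s$; the $\log\frac{1}{|x-y|}$ energy is translation-invariant, the $\log\frac{1}{|e^x-e^y|}$ energy satisfies $\iint\log\frac{1}{|e^{x+s}-e^{y+s}|}\,d\mu\,d\mu=\iint\log\frac{1}{|e^x-e^y|}\,d\mu\,d\mu - s$, and $\frac1t\int V\,d\mu$ strictly decreases since $V'<0$ on $[a(t),b(t))$, contradicting minimality; the bound $a(t)<\widehat x_{\min}(t)$ then follows by the reflection $V(x)\mapsto V(-x)+tx$. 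You instead differentiate the Euler--Lagrange equality to obtain $\frac1t V'(x)=\int\frac{d\mu_t(y)}{x-y}+\int\frac{e^x\,d\mu_t(y)}{e^x-e^y}$ on $(a(t),b(t))$ and evaluate it at the endpoints, using the square-root vanishing of $\psi_t$ to make the principal values converge there. Both right-hand integrals are strictly positive at $x=b(t)$ and strictly negative at $x=a(t)$, giving $V'(b(t))>0$ and $V'(a(t))<0$; the latter is in fact a slightly stronger conclusion ($a(t)<x_{\min}$) than the lemma states, and your detour via ``$<1$'' is unnecessary but harmless. This endpoint-evaluation argument is valid; the one point you correctly flag — continuity of the Cauchy transforms up to the endpoints — is guaranteed by \eqref{psita}--\eqref{psitb}, whereas the paper's shift argument has the small advantage of not invoking edge regularity of $\psi_t$ at all. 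For part \ref{enu:lemma zeros_1a}, the first route you sketch (monotone endpoints from Lemma \ref{lemma endpoints}, then degeneration of \eqref{intro mod eq 1}--\eqref{intro mod eq 2} forcing $c_1(t)\to 0$, hence $b(t)-a(t)\to 0$, then part \ref{enu:lemma zeros_1} pins the common limit at $x_{\min}$) is essentially the paper's argument; your alternative compactness/energy-domination argument — any weak-$*$ subsequential limit must concentrate at $x_{\min}$ because the $\frac1t\int V\,d\mu$ term dominates the uniformly bounded-below logarithmic energies on a compactly contained support — is also sound and, being soft, bypasses the explicit $c_1(t),s_b(t)$ asymptotics entirely.
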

\begin{proof}
\begin{itemize}
\item[\ref{enu:lemma zeros_1}]
  Assume first that $b(t) \leq x_{\min}$. Consider the logarithmic energy defined in \eqref{energy} and assume that $\mu_t$ is its minimizer supported on $[a(t), b(t)]$. We define the measure $\mu^s_t$ by its shifted version $d\mu^s_t(x) = d\mu_t(x-s)$ that is supported on $[a(t) + s, b(t) + s]$. If we substitute $\mu^s_t$ into $\mu$ in \eqref{energy}, then the energy $I(\mu^s_t)$ is a function of $s$. Since $\mu_t = \mu^0_t$ is the minimizer, it is necessary that $\left. \frac{d}{ds} I(\mu^s_t) \right\rvert_{s = 0} = 0$. However, we observe that the first term on the right side of \eqref{energy} is unchanged as $s$ varies; the second term on the right side of \eqref{energy} decreases as $s$ increases, and the last term there also decreases since $V'(x) < 0$ for $x \in [a(t), b(t))$. We thus have $\left. \frac{d}{ds} I(\mu^s_t) \right\rvert_{s = 0} < 0$, which is in contradiction with the fact that $\mu$ minimizes $I(\mu)$. It follows that $x_{\min}<b(t)$.
  
  If we replace $V(x)$ by $V(-x) + tx$ in \eqref{energy}, it is straightforward to verify that the minimizer $\mu_t$ with $d\mu_t(x) = \psi_t(x)dx$ supported on $[a(t), b(t)]$ is replaced by $\mu^r_t$ with $d\mu^r_t(x) = \psi_t(-x)dx$ supported on $[-b(t), -a(t)]$. It follows from the argument above that $-a(t)$ is bigger then the minimum of $V(-x)+x$, which is $-\widehat x_{\min}(t)$, and then $a(t) < \widehat{x}_{\min}(t)$.
  
\item[\ref{enu:lemma zeros_1a}]
  By \eqref{a1b1der}, we have that as $t\searrow 0$, $b(t) - a(t) \searrow l^0 \geq 0$, and then  by \eqref{eq:first_def_of_c_1} $c_1(t) \searrow c^0_1 \geq 0$. If $c^0_1 > 0$, then we can see that \eqref{intro mod eq 1} cannot hold if $t$ is small enough. Hence, as $t \searrow 0$, $a(t)$ and $b(t)$ converge to the same point. Combining this with the results in part \ref{enu:lemma zeros_1} and using the fact that $\lim_{t\to 0}\widehat x_{\min}(t)=x_{\min}$, we obtain the result.
\end{itemize}
\end{proof}

\subsection{Zeros and asymptotics of biorthogonal polynomials}

Detailed uniform in $z\in\mathbb C$ large $n$ asymptotics for the biorthogonal polynomials $p_{n+k}^{(n,V)}(z)$ and $q_{n+k}^{(n,V)}(z)$ have been obtained in \cite{Claeys-Wang11} for strongly convex $V$. In our setting, these asymptotics thus also apply to the external field $\frac{1}{t}V$. Write $t=j/n$. In order to describe the asymptotics, uniform in $t$, for the polynomials $p_{j}^{(j,V/t)}(z)=p_{j}^{(n,V)}(z)$ and $q_{j}^{(j,V/t)}(z)=q_{j}^{(n,V)}(z)$ corresponding to the orthogonality weight $e^{-nV(x)}=e^{-\frac{j}{t}V(x)}$, we need to introduce some notations. 

Define (cf.~\cite[(1.33)]{Claeys-Wang11})
\begin{align}
  \gfn_t(z) \equiv {}& \int \log(z-s) d\mu_t(s), \label{eq:defn_gfn} \\
  \widetilde \gfn_t(z)\equiv {}& \int \log(e^z-e^s)d\mu_t(s), \label{eq:defn_gfn_tilde}
\end{align}
for $z\in\mathbb C\setminus(-\infty,b(t)]$, with the logarithms taking the principal value, such that
\begin{gather}
  \Re \gfn_{t, +}(x) = \Re \gfn_{t, -}(x), \quad \Re \widetilde{\gfn}_{t, +}(x) = \Re \widetilde{\gfn}_{t, -}(x), \qquad x\in (-\infty, b(t)), \label{eq:properties_of_g_func_at_R} \\
  -\Im \gfn_{t, +}(x) = \Im \gfn_{t, -}(x) = -\Im \widetilde{\gfn}_{t, +}(x) = \Im \widetilde{\gfn}_{t, -}(x) =
  \begin{cases}
    \pi \int^{b(t)}_x d\mu_t(s), & x \in (a(t), b(t)), \\
    \pi, & x \in (-\infty, a(t)].
  \end{cases}
\end{gather}
Thus the function
\begin{equation}\label{def Ft}
  F_t(z) := \frac{t}{2}(\widetilde{\gfn}_t(z) - \gfn_t(z)), \quad z \in \stripS = \{z \in \compC \mid \lvert \Im z \rvert < \pi \}
\end{equation}
is an analytic function on $\stripS$. We note that $F_1(x)$ for $x \in \realR$ agrees with $F(x)$ in \eqref{eq:defn_F}. We have the Euler-Lagrange variational conditions (cf.~\cite[(1.19) and (1.21)]{Claeys-Wang11})
\begin{align}
  \gfn_{t,\pm}(x) + \widetilde{\gfn}_{t,\mp}(x) - \frac{1}{t}V(x) - \ell_t = {}& 0, & x \in {}& [a(t), b(t)], \label{var eq} \\
  \gfn_{t,\pm}(x) + \widetilde{\gfn}_{t,\mp}(x) - \frac{1}{t}V(x) - \ell_t < {}& 0, & x \in {}& \mathbb R \setminus[a(t),b(t)], \label{var ineq}
\end{align}
where $\ell_t$ is defined such that for any $y \in [a(t), b(t)]$,
\begin{equation}
  \ell_t = \int \log \lvert x - y \rvert d\mu_t(x) + \int \log \lvert e^x - e^y \rvert d\mu_t(x) - \frac{V(y)}{t}.
\end{equation}

In terms of $c_0(t)$, $c_1(t)$ defined by \eqref{intro mod eq 1} and \eqref{intro mod eq 2}, we further define
\begin{equation}\label{def Gk}
  G_{t, k}(s) := c_1(t)^k \frac{(s + \frac{1}{2})(s - \frac{1}{2})^k}{\sqrt{s^2 - \frac{1}{4} - \frac{1}{c_1(t)}}}, \quad \hat{G}_{t, k}(s) := i\frac{e^{k(\frac{c_1(t)}{2} + c_0(t))}}{\sqrt{c_1(t)}} \frac{(s - \frac{1}{2})^{-k}}{\sqrt{s^2 - \frac{1}{4} - \frac{1}{c_1(t)}}},
\end{equation}
where the square root $\sqrt{s^2 - 1/4 - 1/c_1(t)}$ has its branch cut along $\gamma_1$ in $G_{t, k}(s)$, along $\gamma_2$ in $\hat{G}_{t, k}(s)$, and $\sqrt{s^2 - 1/4 - 1/c_1(t)} \sim s$ as $s \to \infty$ in both cases. Then we define
\begin{align}
  r_{t, k}(x) := {}& 2\lvert G_{t, k}(\Jinv_+(x)) \rvert, & \theta_{t, k}(x) := \arg(G_{t, k}(\Jinv_+(x))), \\
  \hat{r}_{t, k}(x) := {}& 2\lvert \hat{G}_{t, k}(\Jinv_-(x)) \rvert, & \hat{\theta}_{t, k}(x) := \arg(\hat{G}_{t, k}(\Jinv_-(x))).
\end{align}

Below, we summarize the asymptotic results for $p^{(n)}_j(x)$ and $q^{(n)}_j(e^x)$ that are needed in this paper. We give the reference of each result in \cite{Claeys-Wang11}. To simplify the notations, we denote
\begin{equation} \label{eq:defn_widetilde_pq}
  \widetilde{p}^{(n)}_j(x) = e^{-\frac{j}{2} \ell_{j/n}} e^{-\frac{nV(x)}{2}} p^{(n)}_j(x), \quad \widetilde{q}^{(n)}_j(x) = \frac{e^{\frac{j}{2} \ell_{j/n}}}{h^{(n)}_j} e^{-\frac{nV(x)}{2}} q^{(n)}_j(e^x).
\end{equation}
\begin{lem} \label{lem:asy_collected}
  \begin{enumerate}[label=(\alph*)]
  \item (\cite[Theorem 3(e)]{Claeys-Wang11} Asymptotics for the norming constants) \label{enu:lem:asy:1} As $n\to\infty$, we have
    \begin{equation}\label{as h}
      h_j^{(n)}=2\pi c_1(t)^{1/2}e^{j\ell_{t}}(1+\bigO(n^{-1})),
    \end{equation}
    uniformly for  $t=j/n$ in any compact $K\subset(0,+\infty)$.
    
  \item (\cite[Theorem 3(a)]{Claeys-Wang11} Outer asymptotics on the real line) \label{enu:lem:asy:2}
    For any $\epsilon>0$ and for any compact $K\subset(0,+\infty)$, there exist $n_0\in\mathbb N$ and a compact $U_{\epsilon} \subset \realR \setminus \{ 0 \}$ such that for any $n\geq n_0$, $t=j/n\in K$ and $x \in \mathbb R \setminus [a(t) - \epsilon, b(t) + \epsilon]$, we have
    \begin{equation}
      \frac{e^{nF_{t}(x)} \widetilde{p}_j^{(n)}(x)}{e^{\frac{j}{2} (\widetilde{\gfn}_{t}(x) + \gfn_{t}(x) - \frac{n}{j}V(x) - \ell_{t})}} \in U_{\epsilon}, \quad \frac{e^{-nF_{t}(x)} \widetilde{q}_j^{(n)}(x)}{e^{\frac{j}{2} (\widetilde{\gfn}_{t}(x) + \gfn_{t}(x) - \frac{n}{j}V(x) - \ell_{t})}} \in U_{\epsilon}.
    \end{equation}
  \item (\cite[Theorem 3(c) and (d)]{Claeys-Wang11} Microscopic scaling limit at the edge) \label{enu:lem:asy:3}
 Let $x=b(t) + u/(\pi \beta(t) j)^{2/3}$ and $y = a(t) - v/(\pi \alpha(t)j)^{2/3}$. For any compacts $K\subset(0,+\infty)$ and $U\subset\mathbb R$, we have as $n\to\infty$ that
    \begin{align} 
      \frac{1}{(\pi \beta(t) j)^{1/6}} e^{nF_{t}(x)} \widetilde{p}_j^{(n)}(x) = {}& \frac{\sqrt{2\pi}}{c_1(t)^{\frac{1}{2}}\left(s_b(t) - \frac{1}{2} \right) s_b(t)^{1/4}} \Ai(u) + \bigO(n^{-\frac{1}{3}}), \label{pj Airy} \\
      \frac{1}{(\pi \beta(t)j)^{1/6}} e^{-nF_{t}(x)} \widetilde{q}_j^{(n)}(x) = {}& \frac{1}{\sqrt{2\pi} c_1(t)^{\frac{1}{2}} s_b(t)^{1/4}} \Ai(u) + \bigO(n^{-\frac{1}{3}}),\label{qj Airy} \\
      \frac{1}{(\pi \alpha(\frac{j}{n}) j)^{1/6}} e^{nF_{t}(y)} \widetilde{p}_j^{(n)}(y) = {}& \frac{(-1)^j \sqrt{2\pi}}{c_1(t)^{\frac{1}{2}}\left(s_b(t) + \frac{1}{2} \right) s_b(t)^{1/4}} \Ai(v) + \bigO(n^{-\frac{1}{3}}), \label{pj Airy2} \\
      \frac{1}{(\pi \alpha(t)j)^{1/6}} e^{-nF_{t}(y)} \widetilde{q}_j^{(n)}(y) = {}& \frac{(-1)^j}{\sqrt{2\pi} c_1(t)^{\frac{1}{2}} s_b(t)^{1/4}} \Ai(v) + \bigO(n^{-\frac{1}{3}}),\label{qj Airy2}
    \end{align}
    uniformly in $t=j/n\in K$ and $u, v\in U$.
    
  \item (\cite[Theorem 3(c) and (d)]{Claeys-Wang11} Upper bound at the edge) \label{enu:lem:asy:4}
    For any compact $K\subset(0,+\infty)$, there exist constants $\epsilon, c>0$ such that
    \begin{align}
      e^{nF_{t}(x)} \widetilde{p}_j^{(n)}(x) = {}& \bigO(n^{\frac{1}{6}} e^{-cn^{2/3}(x-b(t))}), & e^{-nF_{t}(x)} \widetilde{q}_j^{(n)}(x) = {}& \bigO(n^{\frac{1}{6}} e^{-cn^{2/3}(x-b(t))}), \label{pj edge est} \\ 
      e^{nF_{t}(y)} \widetilde{p}_j^{(n)}(y) = {}& \bigO(n^{\frac{1}{6}} e^{-cn^{2/3}(a(t) - y)}), & e^{-nF_{t}(y)} \widetilde{q}_j^{(n)}(y) = {}& \bigO(n^{\frac{1}{6}} e^{-cn^{2/3}(a(t) - y)}),
    \end{align}
    as $n\to\infty$, uniformly in $t=j/n\in K$ and in $x\in[b(t)-\epsilon,b(t)+\epsilon], y\in[a(t)-\epsilon,a(t)+\epsilon]$. These are not sharp estimates, particularly not for $x<b(t)$, $y>a(t)$.
  \item (\cite[Theorem 3(c) and (d)]{Claeys-Wang11} Edge asymptotics upper bound, improvement for $x<b(t)$ and $y > a(t)$) \label{enu:lem:asy:4a}
    For any compact $K\subset(0,+\infty)$, there exist constants $\epsilon, c>0$ such that
    \begin{align}
      e^{nF_{t}(x)} \widetilde{p}_j^{(n)}(x) = {}& \bigO((b(t) - x + n^{-\frac{2}{3}})^{-\frac{1}{4}}), & e^{-nF_{t}(x)} \widetilde{q}_j^{(n)}(x) = {}& \bigO((b(t) - x + n^{-\frac{2}{3}})^{-\frac{1}{4}}), \label{pj edge est2} \\ 
      e^{nF_{t}(y)} \widetilde{p}_j^{(n)}(y) = {}& \bigO((y - a(t) + n^{-\frac{2}{3}})^{-\frac{1}{4}}), & e^{-nF_{t}(y)} \widetilde{q}_j^{(n)}(y) = {}& \bigO((y - a(t) + n^{-\frac{2}{3}})^{-\frac{1}{4}}),
    \end{align}
    as $n\to\infty$, uniformly in $t=j/n\in K$ and in $x\in [b(t) - \epsilon, b(t)]$ and $y \in [a(t), a(t) + \epsilon]$.  These are not sharp estimates.
  \item (\cite[Theorem 3(b)]{Claeys-Wang11} Bulk asymptotics) \label{enu:lem:asy:5}
    Let $x = x^* + u/(\pi \varphi_t(x^*)n)$. For any $\epsilon>0$ and for any compacts $K\subset(0,+\infty)$, $U\subset\mathbb C$, we have uniformly in $x^* \in (a(t) + \epsilon, b(t) - \epsilon)$, $u\in U$, and $t=j/N\in K$ that
    \begin{equation} \label{eq:bulk_p}
      e^{nF_{t}(x)} \widetilde{p}^{(n)}_{j + k}(x) = r_{t, k}(x^*) \left[ \cos \left( n\pi \int^{b(t)}_{x^*} d\mu_{t}(\xi) + \theta_{t, k}(x^*) - u \right) + \bigO(n^{-1}) \right],
    \end{equation}
    \begin{multline} \label{eq:bulk_q}
      e^{-nF_{t}(x)} \widetilde{q}^{(n)}_{j + k}(x) = \\
      \frac{\hat{r}_{t, k}(x^*)}{\sqrt{2\pi} c_1(t)^{k + \frac{1}{2}} e^{k(\frac{c_1(t)}{2} + c_0)}} \left[ \cos \left( n\pi \int^{b(t)}_{x^*} d\mu_{t}(\xi) + \hat{\theta}_{t, k}(x^*) - u \right) + \bigO(n^{-1}) \right],
    \end{multline}
    as $n\to\infty$.
  \item (\cite[Theorem 3(a)]{Claeys-Wang11} Another outer asymptotics)
    Let $k\in\mathbb Z$ be fixed. For any compact $U\subset\compC \setminus [a(1),b(1)]$, we have (with $c_1 = c_1(1)$, $\gfn = \gfn_1$, $G_k = G_{1, k}$, $\widetilde{G}_{-k} = \widetilde{G}_{1, -k}$, $\ell = \ell_t$, and for all quantities depending implicitly on $t$, it is assumed that $t = 1$)
    \begin{align}
      p_{n + k}^{(n)}(z) = {}& G_k(\Jinv_1(z)) e^{n\gfn(z)} (1 + \bigO(n^{-1}) = c^k_1 \frac{(s + \frac{1}{2})(s - \frac{1}{2})^k}{\sqrt{s^2 - \frac{1}{4} - \frac{1}{c_1}}} e^{n\gfn(z)} (1 + \bigO(n^{-1})), \label{eq:asy_of_p_n+k} \\
      Cq_{n - k}^{(n)}(z) = {}& \widetilde{G}_{-k}(\Jinv_1(z)) e^{-n\gfn(z) + n\ell} (1 + \bigO(n^{-1})) \notag \\
                             &\qquad\qquad  = \frac{i}{\sqrt{c_1}} \frac{e^{-k(\frac{c_1}{2} + c_0)} (s - \frac{1}{2})^k}{\sqrt{s^2 - \frac{1}{4} - \frac{1}{c_1}}} e^{-n\gfn(z) + n\ell} (1 + \bigO(n^{-1})) \label{eq:asy_of_q_n-j}, \\
      h_{n - k}^{(n)} = {}& 2\pi c^{-k + \frac{1}{2}}_1 e^{-j(\frac{c_1}{2} + c_0)} e^{n\ell} (1 + \bigO(n^{-1})),\label{as hnk}
    \end{align}
    as $n\to\infty$, uniformly in $z\in U$, where we denote $s = \Jinv_1(z)$, and $\Jinv_1$ is an inverse function of $\Jlike_{c_1(1), c_0(1)}$ as defined in \cite[(1.26)]{Claeys-Wang11}. Here $Cq^{(n)}_j$ is the Cauchy transform of $q^{(n)}_j$
    \begin{equation} \label{eq:Cauchy_trans}
      Cq^{(n)}_j(z) = \frac{-1}{2\pi i z} \int^{\infty}_{-\infty} \frac{q^{(n)}_j(e^s)}{1 - s/z} e^{-nV(s)} ds.
    \end{equation}
  \end{enumerate}
\end{lem}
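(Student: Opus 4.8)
The plan is to treat Lemma~\ref{lem:asy_collected} as a compilation of results from \cite{Claeys-Wang11}: the work is to apply the Deift--Zhou steepest-descent analysis of that paper to the rescaled external field $V/t$ and to verify that the error bounds hold uniformly in $t=j/n$ when $t$ ranges over a compact $K\subset(0,+\infty)$. First I would note that $(V/t)''\ge c/t$, so $V/t$ is strongly convex with convexity constant $\ge c/\max K>0$ for $t\in K$; hence the Riemann--Hilbert analysis of \cite{Claeys-Wang11} for $p^{(n)}_j$, $q^{(n)}_j$ and the Cauchy transform $Cq^{(n)}_j$ applies verbatim with $n$ replaced by $j=nt$ and orthogonality weight $e^{-nV}=e^{-(j/t)V}$. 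Relabelling then produces every displayed formula with the $t$-dependent objects $\mu_t$, $\gfn_t$, $\wt\gfn_t$, $F_t$, $a(t)$, $b(t)$, $G_{t,k}$, $\hat G_{t,k}$, $r_{t,k}$, $\theta_{t,k}$, and so on.

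The main point, and the main obstacle, is the uniformity in $t$, since \cite{Claeys-Wang11} states its estimates only for a fixed external field. Here I would argue that nothing in the steepest descent degenerates over $K$: by Lemma~\ref{lemma endpoints} the endpoints $a(t),b(t)$ (hence $c_0(t),c_1(t)$) are smooth in $t$ and stay in a compact set; by \eqref{eq:density_psi_t} and \eqref{psita}--\eqref{psitb} the density $\psi_t$ vanishes exactly like a square root at the endpoints, and \eqref{alpha}--\eqref{beta} show $\alpha(t),\beta(t)$ are bounded above and bounded away from $0$ on $K$; and the functions $\gfn_t,\wt\gfn_t$, the outer (global) parametrix, and the Airy local parametrices at $a(t),b(t)$ depend real-analytically on $t$. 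Consequently the final small-norm Riemann--Hilbert problem has jump $I+\bigO(n^{-1})$ uniformly for $t\in K$, which yields the uniform $\bigO(n^{-1})$ bounds in parts~\ref{enu:lem:asy:1}, \ref{enu:lem:asy:2}, \ref{enu:lem:asy:5}, and, on the $n^{2/3}$-scale at the edges, the uniform $\bigO(n^{-1/3})$ bounds in part~\ref{enu:lem:asy:3}.

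It then remains to match each part with its source. Part~\ref{enu:lem:asy:1} is \cite[Theorem~3(e)]{Claeys-Wang11}. Part~\ref{enu:lem:asy:2} is \cite[Theorem~3(a)]{Claeys-Wang11} once one uses the cancellations built into \eqref{eq:defn_widetilde_pq} and \eqref{def Ft} to see that the displayed ratios equal $e^{-j\gfn_t(x)}p^{(n)}_j(x)$ and a corresponding expression for $q$, whose leading terms are, up to the explicit constant in \eqref{as h}, a fixed branch of $G_{t,0}\circ\Jinv$ (resp.\ of $\hat G_{t,0}\circ\Jinv$), continuous and non-vanishing on $\mathbb R\setminus[a(t)-\ep,b(t)+\ep]$ and bounded as $x\to\pm\infty$, hence confined to a compact $U_\ep\subset\mathbb R\setminus\{0\}$. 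Parts~\ref{enu:lem:asy:3}--\ref{enu:lem:asy:4a} I would read off from the Airy local parametrix of \cite[Theorem~3(c),(d)]{Claeys-Wang11}: the explicit leading constants come from matching that parametrix to the outer one at $b(t)$ and $a(t)$, the factor $(-1)^j$ at the left edge from the $e^{\pm i\pi j}$ terms in the left-edge parametrix, the bound~\ref{enu:lem:asy:4} from $\Ai(\zeta)=\bigO(e^{-c\zeta^{3/2}})$ for $\zeta>0$, and the refinement~\ref{enu:lem:asy:4a} from $\Ai(-\zeta)=\bigO((1+\zeta)^{-1/4})$ combined with the $n^{2/3}$-scaling of the conformal map. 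Part~\ref{enu:lem:asy:5} is \cite[Theorem~3(b)]{Claeys-Wang11}, where the degree shift $j\mapsto j+k$ accounts for the extra factors $c_1(t)^k(s-\frac{1}{2})^{\pm k}$ in $G_{t,k},\hat G_{t,k}$ and hence for the $k$-dependence of the amplitudes and phases. Finally, item~(g) is the $t=1$ specialization of \cite[Theorem~3(a),(e)]{Claeys-Wang11} in $\compC\setminus[a(1),b(1)]$, with \eqref{eq:Cauchy_trans} appearing as the Cauchy-transform component of the Riemann--Hilbert vector; its asymptotics are read off directly from the outer parametrix there. Throughout, the hard part is the bookkeeping for $t$-uniformity, which Lemma~\ref{lemma endpoints} and the non-degenerate formulas \eqref{alpha}--\eqref{beta} render routine if tedious.
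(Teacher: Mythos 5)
The paper gives no proof of this lemma: it is stated as a compilation of results from \cite{Claeys-Wang11}, with the key rescaling $p_j^{(n,V)}=p_j^{(j,V/t)}$ for $t=j/n$ explained just before the lemma, and each part is simply traced back to the relevant item of \cite[Theorem 3]{Claeys-Wang11}. Your proposal takes exactly this route and moreover correctly pinpoints the one point left implicit in the paper — uniformity in $t$ over a compact $K\subset(0,\infty)$ — and gives the right reason for it, namely that $c_0(t),c_1(t),a(t),b(t),\alpha(t),\beta(t)$ and the outer/Airy parametrices depend smoothly and non-degenerately on $t$ by Lemma~\ref{lemma endpoints} and \eqref{alpha}--\eqref{beta}, so the small-norm bound in the final Riemann--Hilbert step is uniform.
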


\begin{lemma} \label{enu:lemma zeros_2}
  For any $\epsilon > 0$, there is a $\delta > 0$ such that for all $n$ large enough, the zeros of $p^{(n)}_j(x)$ and $q^{(n)}_j(e^x)$ lie in the interval $(x_{\min} - \epsilon, x_{\min} + \epsilon)$ for all $j \leq \delta n$.
\end{lemma}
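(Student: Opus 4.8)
I would argue in three stages: first that the relevant zeros are real, then localize them for $j/n$ bounded away from $0$ using the known asymptotics, and finally treat the remaining small range $j\le\delta_0 n$ by a direct argument. \emph{Stage 1 (real zeros).} By biorthogonality one has $\int_{\mathbb R} p_j^{(n)}(x)\,e^{kx}\,e^{-nV(x)}\,dx=0$ and $\int_{\mathbb R} x^k\,q_j^{(n)}(e^x)\,e^{-nV(x)}\,dx=0$ for $k=0,\dots,j-1$, since $q_0^{(n)},\dots,q_{j-1}^{(n)}$ (resp.\ $p_0^{(n)},\dots,p_{j-1}^{(n)}$) span the polynomials of degree $<j$. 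If $p_j^{(n)}$ changed sign at only $m<j$ points $\xi_1<\dots<\xi_m$, then $p_j^{(n)}(x)\prod_{l=1}^m(e^x-e^{\xi_l})$ would have constant sign on $\mathbb R$ (each factor $e^x-e^{\xi_l}$ changes sign only at $\xi_l$, since $x\mapsto e^x$ is increasing), yet its integral against $e^{-nV(x)}dx$ must vanish because $\prod_l(e^x-e^{\xi_l})$ is a degree-$m$ polynomial in $e^x$ with $m\le j-1$ --- a contradiction. So $p_j^{(n)}$ has exactly $j$ simple real zeros; the same argument with the auxiliary factor $\prod_l(x-\xi_l)$, together with the fact that $q_j^{(n)}(e^x)$ has at most $j$ real zeros in $x$ (as $x\mapsto e^x$ is injective), shows $q_j^{(n)}(e^x)$ has exactly $j$ simple real zeros in $x$.

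\emph{Stage 2 ($j/n$ bounded below).} Fix $\epsilon>0$. By part \ref{enu:lemma zeros_1a} of Lemma~\ref{lemma zeros}, $a(t),b(t)\to x_{\min}$ as $t\searrow0$, so I can choose $\delta>0$ with $[a(t),b(t)]\subset(x_{\min}-\epsilon/2,\,x_{\min}+\epsilon/2)$ for $0<t\le\delta$. For any fixed $\delta_0\in(0,\delta)$, the outer asymptotics on the real line (part \ref{enu:lem:asy:2} of Lemma~\ref{lem:asy_collected}), applied with the compact set $[\delta_0,\delta]$ and tolerance $\epsilon/2$, yields $n_0$ such that for $n\ge n_0$ and $\delta_0\le j/n\le\delta$ the functions $\widetilde p_j^{(n)},\widetilde q_j^{(n)}$, hence $p_j^{(n)}$ and $q_j^{(n)}(e^x)$, do not vanish for $x\notin[a(j/n)-\epsilon/2,\,b(j/n)+\epsilon/2]$. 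Combined with the choice of $\delta$, all zeros then lie in $(x_{\min}-\epsilon,x_{\min}+\epsilon)$ whenever $\delta_0 n\le j\le\delta n$.

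\emph{Stage 3 ($j\le\delta_0 n$) --- the main obstacle.} What remains is $1\le j\le\delta_0 n$, with $\delta_0$ still free; this includes bounded $j$, where $t=j/n\to0$ leaves every compact subset of $(0,\infty)$, so Lemma~\ref{lem:asy_collected} does not apply. Here I would use that, $V$ being strongly convex with minimizer $x_{\min}$, the weight $e^{-nV(x)}$ concentrates at $x_{\min}$ on the scale $n^{-1/2}$: for fixed $j$, substituting $x=x_{\min}+s\,n^{-1/2}$ and expanding $e^{kx}$ in powers of $s\,n^{-1/2}$ turns the orthogonality of $p_j^{(n)}$, to leading order, into vanishing-moment conditions against $1,s,\dots,s^{j-1}$ for the Gaussian weight $e^{-\frac12 V''(x_{\min})s^2}$, so an appropriate rescaling of $p_j^{(n)}(x_{\min}+s\,n^{-1/2})$ converges locally uniformly to a nonzero multiple of a Hermite polynomial in $s$, whose zeros are bounded; hence the zeros of $p_j^{(n)}$, and likewise those of $q_j^{(n)}(e^x)$, tend to $x_{\min}$. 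The hard point is to make this uniform over the whole range $j\le\delta_0 n$, i.e.\ as $t=j/n\searrow0$. I see two routes: either revisit the steepest-descent analysis of \cite{Claeys-Wang11} for the external field $\tfrac1tV$ and check that its error estimates remain uniform as $t\searrow0$ (the Riemann--Hilbert problem degenerating, after rescaling near $x_{\min}$, to the model Gaussian one), or argue by potential theory that the normalized zero-counting measures of $p_j^{(n)}$ and of $q_j^{(n)}(e^x)$ are, up to an $o(1)$ error uniform in $t=j/n\le\delta_0$, supported in $[a(j/n),b(j/n)]$. Either way, combined with part \ref{enu:lemma zeros_1a} of Lemma~\ref{lemma zeros}, choosing $\delta_0$ small enough confines all the zeros to $(x_{\min}-\epsilon,x_{\min}+\epsilon)$, which completes the proof.
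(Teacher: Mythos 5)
Your Stages 1 and 2 are sound: Stage 1 gives a clean direct proof that the zeros are real and simple (essentially the AT/Chebyshev-system property), and Stage 2 correctly localizes the zeros for $\delta_0 n\le j\le\delta n$ via Lemma \ref{lemma zeros}\ref{enu:lemma zeros_1a} and the outer asymptotics in Lemma \ref{lem:asy_collected}\ref{enu:lem:asy:2}. But Stage 3, which you identify as the main obstacle, is left genuinely open: both routes you sketch (redoing the Riemann--Hilbert analysis with uniformity as $t\searrow 0$, or a potential-theoretic argument on zero-counting measures) would require substantial new work that you do not carry out, and your Hermite-polynomial heuristic for fixed $j$ does not by itself give uniformity over the entire range $1\le j\le\delta_0 n$.

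The paper sidesteps Stage 3 entirely with a soft argument you are already halfway toward in Stage 1: the \emph{interlacing} of zeros. Since $\{1,x,\dots\}$ together with $\{1,e^x,\dots\}$ forms an AT system (\cite[Section 4.4]{Nikishin-Sorokin91}), the zeros of $p_{j}^{(n)}$ and $p_{j+1}^{(n)}$ interlace, and likewise for $q_j^{(n)}(e^x)$. Thus once the zeros of $p_k^{(n)}$ are confined to $(x_{\min}-\epsilon,x_{\min}+\epsilon)$ for some $k\approx\delta n$ (your Stage 2, applied at $j/n$ near $\delta$), interlacing automatically confines the zeros of $p_j^{(n)}$ for \emph{every} $j\le k$ to the same interval. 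This one observation replaces all of your Stage 3; no asymptotics uniform in $t\searrow 0$ are needed. I would recommend replacing Stage 3 with this interlacing argument.
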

\begin{proof}
  By part \ref{enu:lemma zeros_1a} of Lemma \ref{lemma zeros}, for any $\epsilon>0$, there exists $\delta>0$ such that for $t\in (0,\delta)$, the support of $\mu_t$ is strictly included in $(x_{\min}-\epsilon/2, x_{\min}+\epsilon/2)$. From the asymptotics of the biorthogonal polynomials $p_k^{(n)}$ and $q_k^{(n)}$ in Lemma \ref{lem:asy_collected}, especially part \ref{enu:lem:asy:2}, it follows that for $n,k$ large and $k/n$ sufficiently close to $\delta$, all the zeros of $p_k^{(n)}(x)$ and of $q_k^{(n)}(e^x)$ lie in $[x_{\min}-\epsilon,x_{\min}+\epsilon]$. Moreover, like the zeros of orthogonal polynomials, the zeros of $p_k^{(n)}(x)$ interlace, as well as those of $q_k^{(n)}(e^x)$, which means that the zeros of $p_j^{(n)}(x)$ and of $q_j^{(n)}(e^x)$ lie in $[x_{\min}-\epsilon,x_{\min}+\epsilon]$ for any $j\leq k$. The interlacing property follows from the fact that the biorthogonal polynomials form an AT system \cite[Section 4.4]{Nikishin-Sorokin91}.
\end{proof}

Next we give rough bounds for the biorthogonal polynomials with small degrees and the norming constants, obtained by elementary estimates based on the zeros of the polynomials. 

\begin{lem} \label{lem:zeros_additional}
  Let $\epsilon > 0$, and let $\delta > 0$ be associated to $\epsilon$ as in Lemma \ref{enu:lemma zeros_2}. Suppose $n$ is large and $0 \leq j < \delta n$.
  \begin{enumerate}[label=(\alph*)]
  \item (Upper bound for multiple orthogonal polynomials of lower degrees) \label{enu:lem:asy:6}
    \begin{align}
      \lvert p^{(n)}_j(x) \rvert \leq {}&
      \begin{cases}
        (x - (x_{\min} - \epsilon))^j, & \text{$x \geq x_{\min} + \epsilon$}, \\
        ((x_{\min} + \epsilon) - x)^j, & \text{$x \leq x_{\min} - \epsilon$}, \\
        (2\epsilon)^j, & \text{$x \in [x_{\min} - \epsilon, x_{\min} + \epsilon]$},
      \end{cases} \label{eq:p_lower_degree} \\
      \lvert q^{(n)}_j(x) \rvert \leq {}&
      \begin{cases}
        (e^x - e^{x_{\min} - \epsilon})^j, & \text{$x \geq x_{\min} + \epsilon$}, \\
        (e^{x_{\min} + \epsilon} - e^x)^j, & \text{$x \leq x_{\min} - \epsilon$}, \\
        (e^{x_{\min} + \epsilon} - e^{x_{\min} - \epsilon})^j, & \text{$x \in [x_{\min} - \epsilon, x_{\min} + \epsilon]$}.
      \end{cases} \label{eq:q_lower_degree}
    \end{align}
  \item (Lower bound for the norming constants of lower degrees) \label{enu:lem:asy:7}
    \begin{equation} \label{eq:h_lower_degree}
      h^{(n)}_j \geq e^{-n(V(x_{\min}) + C(\epsilon))},
    \end{equation}
    where $C(\epsilon) > 0$ depends on $V$ but not on $j, n$, and $C(\epsilon) \to 0$ as $\epsilon \to 0$.
  \end{enumerate}
\end{lem}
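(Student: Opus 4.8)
I would prove the two estimates separately, in both cases using only the location of zeros from Lemma~\ref{enu:lemma zeros_2}: once $n$ is large and $j<\delta n$, the monic polynomial $p^{(n)}_j$ has $j$ real simple zeros, all in $I_\epsilon:=(x_{\min}-\epsilon,x_{\min}+\epsilon)$, and likewise the function $x\mapsto q^{(n)}_j(e^x)$ has $j$ real simple zeros, all in $I_\epsilon$ (equivalently, the polynomial $q^{(n)}_j$ has its $j$ zeros in $(e^{x_{\min}-\epsilon},e^{x_{\min}+\epsilon})$). Given this, the whole lemma reduces to elementary estimates.

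For part~\ref{enu:lem:asy:6} I would simply factor the monic polynomials and bound each factor. Writing $p^{(n)}_j(x)=\prod_{k=1}^j(x-z_k)$ with $z_k\in I_\epsilon$: if $x\ge x_{\min}+\epsilon$ then $0\le x-z_k\le x-(x_{\min}-\epsilon)$; if $x\le x_{\min}-\epsilon$ then $0\le z_k-x\le(x_{\min}+\epsilon)-x$; and if $x\in I_\epsilon$ then $|x-z_k|\le2\epsilon$. Multiplying over $k$ gives \eqref{eq:p_lower_degree}. The bound \eqref{eq:q_lower_degree} follows in exactly the same way from $q^{(n)}_j(e^x)=\prod_{k=1}^j(e^x-e^{\zeta_k})$, where $\zeta_k\in I_\epsilon$ are the zeros of $q^{(n)}_j(e^\cdot)$, using that $s\mapsto e^s$ is increasing so that the three regimes $x\ge x_{\min}+\epsilon$, $x\le x_{\min}-\epsilon$, $x\in I_\epsilon$ translate into the corresponding regimes for $e^x$.

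Part~\ref{enu:lem:asy:7} is the step that needs an idea, and I expect it to be the main obstacle. Starting from $h^{(n)}_j=\int_{\mathbb R}p^{(n)}_j(x)q^{(n)}_j(e^x)e^{-nV(x)}\,dx$, a naive split into a neighbourhood of $x_{\min}$ and its complement is useless for a \emph{lower} bound, since the integrand changes sign. Instead I would exploit the freedom in the second factor: by biorthogonality, for every monic polynomial $R$ of degree $j$,
\[
  \int_{\mathbb R}p^{(n)}_j(x)\,R(e^x)\,e^{-nV(x)}\,dx=h^{(n)}_j ,
\]
because $R(e^x)-q^{(n)}_j(e^x)$ is a polynomial of degree $\le j-1$ in $e^x$, hence a linear combination of $q^{(n)}_0(e^x),\dots,q^{(n)}_{j-1}(e^x)$, each of which is orthogonal to $p^{(n)}_j$ against $e^{-nV}$. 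Choosing $R(y)=\prod_{k=1}^j(y-e^{z_k})$, where $z_1,\dots,z_j$ are the zeros of $p^{(n)}_j$, yields
\[
  h^{(n)}_j=\int_{\mathbb R}\prod_{k=1}^j(x-z_k)\,(e^x-e^{z_k})\,e^{-nV(x)}\,dx ,
\]
whose integrand is now \emph{pointwise nonnegative}, since $x-z_k$ and $e^x-e^{z_k}$ always carry the same sign.

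With a nonnegative integrand the rest is routine: I would restrict the integral to $J:=[x_{\min}+2\epsilon,x_{\min}+3\epsilon]$, which lies to the right of all $z_k$. On $J$ one has $x-z_k\ge\epsilon$ and $e^x-e^{z_k}\ge e^{x_{\min}+2\epsilon}-e^{x_{\min}+\epsilon}\ge\epsilon e^{x_{\min}}$, while $e^{-nV(x)}\ge e^{-nV(x_{\min}+3\epsilon)}$ because $V$ is increasing on $[x_{\min},\infty)$; hence $h^{(n)}_j\ge\epsilon\,(\epsilon^2 e^{x_{\min}})^j\,e^{-nV(x_{\min}+3\epsilon)}$. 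Taking logarithms, using $0\le j<\delta n$ and $\epsilon^2 e^{x_{\min}}<1$ for small $\epsilon$, and using $\tfrac1n|\log\epsilon|\to0$ to absorb the boundary term for $n$ large, this gives $h^{(n)}_j\ge e^{-n(V(x_{\min})+C(\epsilon))}$ with $C(\epsilon)=\bigl(V(x_{\min}+3\epsilon)-V(x_{\min})\bigr)+\delta\,|\log(\epsilon^2 e^{x_{\min}})|+\epsilon$. The first and third terms vanish as $\epsilon\to0$; for the middle one I would just take $\delta$ smaller than the value supplied by Lemma~\ref{enu:lemma zeros_2} (say $\delta\le|\log\epsilon|^{-2}$), so that $\delta\,|\log(\epsilon^2 e^{x_{\min}})|\to0$ as well. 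Apart from the sign-killing substitution in part~\ref{enu:lem:asy:7}, everything here is bookkeeping.
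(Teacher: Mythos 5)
Your proof is correct and takes essentially the same approach as the paper. Part~(a) is identical: factor the monic polynomials over their zeros (all confined to $(x_{\min}-\epsilon,x_{\min}+\epsilon)$ by Lemma~\ref{enu:lemma zeros_2}), bound each linear factor, and multiply. For part~(b) the paper makes exactly your "sign-killing" observation, only with the symmetric choice: it replaces $p^{(n)}_j$ by the monic polynomial $p(x)=\prod_i(x-x_i)$ whose zeros $x_i$ are those of $q^{(n)}_j(e^\cdot)$, whereas you replace $q^{(n)}_j(e^x)$ by $\prod_k(e^x-e^{z_k})$ built from the zeros $z_k$ of $p^{(n)}_j$. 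Either substitution is justified by biorthogonality and produces the same pointwise-nonnegative integrand $\prod_i(x-x_i)(e^x-e^{x_i})e^{-nV(x)}$, after which the restriction to $[x_{\min}+2\epsilon,x_{\min}+3\epsilon]$ and the crude lower bound proceed exactly as you wrote. Your final bookkeeping, including the remark that $\delta$ may be shrunk (say $\delta\le|\log\epsilon|^{-2}$) to guarantee $C(\epsilon)\to0$, fills in the step the paper dismisses as "a standard estimate" and is a worthwhile addition.
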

\begin{proof}
  Part \ref{enu:lem:asy:6} is a direct consequence of Lemma \ref{enu:lemma zeros_2}. To prove part \ref{enu:lem:asy:7}, we note that by the orthogonality \eqref{kernel},
  \begin{equation}\label{hnj}
    h^{(n)}_j = \int_{\realR} p(x) q^{(n)}_j(e^x) e^{-nV(x)} dx,
  \end{equation}
  for any monic polynomial $p(x)$ of degree $j$. Assume that the zeros of $q^{(n)}_j(e^x)$ are $x_1, x_2, \dotsc, x_j \in (x_{\min} - \epsilon, x_{\min} + \epsilon)$. We let $p(x) = \prod^{j}_{i = 1} (x - x_i)$, such that the integrand in \eqref{hnj} is positive. We then have
  \begin{equation} \label{eq:lower_bound_h}
    \begin{split}
      h^{(n)}_j = {}& \int_{\realR} \left( \prod^j_{i = 1} (x - x_i) \right) \left( \prod^j_{i = 1} (e^x - e^{x_i}) \right)e^{-nV(x)} dx \\
      > {}& \int_{x_{\min} + 2\epsilon}^{x_{\min}+3\epsilon} (x - (x_{\min} + \epsilon))^j (e^x - e^{x_{\min} + \epsilon})^j e^{-n V(x)} dx,
    \end{split}
  \end{equation}
  
  Noting that $e^{-nV(x)}$ attains its minimum on the interval $[x_{\min} + 2\epsilon, x_{\min}+3\epsilon]$ at $x_{\min} + 3\epsilon$ due to the strong convexity, we obtain 
  \[h^{(n)}_j>\epsilon^j(e^{x_{\min}+2\epsilon}-e^{x_{\min}+\epsilon})^je^{-nV(x_{\min}+3\epsilon)}.
    \]
 The desired result now follows by a standard estimate of the right-hand side in the above equation.
\end{proof}

\subsection{Useful corollaries}

First, let $F_t$ be defined by \eqref{def Ft}. Since $F_t(z)$ is analytic with respect to $z$ on $\stripS$, the derivative $\frac{\partial}{\partial z} F_t(z)$ exists and is continuous there. On the other hand, for $z \in \stripS \setminus [a(t), b(t)]$, by the definition of $F_t(z)$ and the formula \eqref{eq:density_psi_t} for the density of $\mu_t$, we know that $\frac{\partial}{\partial t} F_t(z)$ also exists and is continuous there. Furthermore, if $w \in [a(t), b(t)]$ or in the vinicity of this interval, we have 
\begin{equation}
  \frac{\partial}{\partial t} F_t(w) = \frac{1}{2\pi i} \oint \frac{\partial}{\partial t} F_t(z) \frac{dz}{z - w},
\end{equation}
where the contour integral is over a closed contour within $\stripS$ that encloses $[a(t), b(t)]$. In this way, we have that the derivative $\frac{\partial}{\partial t} F_t(z)$ exists and is continuous on $\stripS$. Therefore, we have the following result:

\begin{cor}
Let $K_1 \subset \stripS$ and $K_2 \subset (0, \infty)$ be two compact sets. Then there exists $C_0 > 0$ such that for all $u, v \in K_1$ and $s, t \in K_2$, we have the estimates
  \begin{equation} \label{eq:continuity_of_F}
    \lvert F_t(u) - F_t(v) \rvert < C_0 \lvert u - v \rvert, \qquad \lvert F_s(u) - F_t(u) \rvert < C_0 \lvert s-t \rvert
  \end{equation}
  and
  \begin{equation} \label{eq:continuity_of_F_second}
    \lvert F_s(u) - F_s(v) - F_t(u) + F_t(v) \rvert < C_0 \lvert u - v \rvert \lvert s - t \rvert.
  \end{equation}
\end{cor}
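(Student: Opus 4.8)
The plan is to obtain all three inequalities by integrating suitable bounded derivatives of $F_t$ along \emph{straight segments}. Since the strip $\stripS$ is convex, I may assume $K_1$ is convex (replace it by its convex hull, which is a compact convex subset of $\stripS$) and then enlarge it slightly to a compact convex set $\widehat K_1 \subset \stripS$ with $K_1 \subset \operatorname{int}\widehat K_1$; similarly let $\widehat K_2 \subset (0,\infty)$ be a compact interval containing $K_2$, and put $\Gamma := \partial \widehat K_1$, a rectifiable simple closed curve in $\stripS$. Then for any $u, v \in K_1$ the segment $[v,u]$ lies in $K_1 \subset \operatorname{int}\widehat K_1$, so no ``close versus far'' case distinction will be needed, and it suffices to bound $\partial_z F_t$ and $\partial_t \partial_z F_t$ on $K_1 \times \widehat K_2$.

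First, $F_t(z)$ is jointly continuous in $(z,t)$ on $\stripS \times (0,\infty)$ — this is clear from the definition \eqref{def Ft} together with the smooth dependence of $\mu_t$ on $t$ (endpoints by Lemma~\ref{lemma endpoints}, density by \eqref{eq:density_psi_t}) — and analytic in $z$, so by the Cauchy integral formula
\[
  \partial_z F_t(w) = \frac{1}{2\pi i} \oint_\Gamma \frac{F_t(z)}{(z-w)^2}\, dz, \qquad w \in \operatorname{int}\widehat K_1,
\]
which is jointly continuous, hence bounded on $K_1 \times \widehat K_2$ by some $M_1$. This already gives the first estimate in \eqref{eq:continuity_of_F}: $\lvert F_t(u) - F_t(v) \rvert = \left\lvert \int_{[v,u]} \partial_z F_t(w)\, dw \right\rvert \le M_1 \lvert u-v \rvert$. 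For the second estimate in \eqref{eq:continuity_of_F}, the discussion preceding the statement gives that $\partial_t F_t(z)$ exists and is continuous on $\stripS$, hence bounded on $\widehat K_1 \times \widehat K_2$ by some $M_2$; then $\lvert F_s(u) - F_t(u) \rvert = \left\lvert \int_t^s \partial_\tau F_\tau(u)\, d\tau \right\rvert \le M_2 \lvert s-t \rvert$, the integration path from $t$ to $s$ staying in $\widehat K_2$.

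For \eqref{eq:continuity_of_F_second} I would differentiate the Cauchy representation once more, now in $t$. Since $F_t(z)$ and $\partial_t F_t(z)$ are both continuous in $(z,t)$ on the compact set $\Gamma \times \widehat K_2$, differentiation under the integral sign is licit and gives
\[
  \partial_t \partial_w F_t(w) = \frac{1}{2\pi i} \oint_\Gamma \frac{\partial_t F_t(z)}{(z-w)^2}\, dz,
\]
which is jointly continuous in $(w,t)$ and hence bounded on $K_1 \times \widehat K_2$ by some $M_3$. (Alternatively: Morera's theorem shows $\partial_t F_t(z)$ is analytic in $z$, after which one more application of Cauchy's formula gives the same conclusion.) Then, for fixed $w$, $\partial_w F_s(w) - \partial_w F_t(w) = \int_t^s \partial_\tau \partial_w F_\tau(w)\, d\tau$, so that
\[
  F_s(u) - F_s(v) - F_t(u) + F_t(v) = \int_{[v,u]} \bigl( \partial_w F_s(w) - \partial_w F_t(w) \bigr)\, dw
\]
has modulus at most $M_3 \lvert u-v \rvert\, \lvert s-t \rvert$. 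Setting $C_0 := \max\{M_1, M_2, M_3\} + 1$ then yields all three inequalities.

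I expect the one step worth genuine care to be the justification that $\partial_t F_t(w)$ has a continuous $z$-derivative, that is, the interchange of $\partial_t$ with the Cauchy integral (equivalently, the Morera/Weierstrass argument for analyticity of $\partial_t F_t$ in $z$); once this regularity is recorded, each of the three bounds is a one-line integration of a bounded derivative over a segment contained in $\widehat K_1$, and the convexity of the strip removes any need for a separate treatment of points that are far apart.
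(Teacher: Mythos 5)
Your argument is correct and matches the paper's approach: the paper establishes (in the discussion immediately preceding the corollary) that $\partial_z F_t$ and $\partial_t F_t$ exist and are jointly continuous on $\stripS\times(0,\infty)$, using precisely the Cauchy-integral device you invoke to extend $\partial_t F_t$ across $[a(t),b(t)]$, and then leaves the three Lipschitz bounds as an immediate consequence, which you obtain by integrating the bounded derivatives $\partial_z F_t$, $\partial_t F_t$, and $\partial_t\partial_z F_t$ over segments in a slightly enlarged compact. The only minor point is that you should ensure $\Gamma$ is chosen at positive distance from $K_1$ so that $(z-w)^{-2}$ is uniformly bounded for $z\in\Gamma$, $w\in K_1$; this is automatic once $K_1\subset\operatorname{int}\widehat K_1$ as you arrange, so the interchange of $\partial_t$ with the contour integral is fully justified.
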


Next, we state a corollary of Lemmas \ref{lem:asy_collected} and \ref{lem:zeros_additional}. We use the same notation as in these lemmas.
\begin{cor}
  Let $C_2>0$ be a large enough constant. For any $M>0$, there exist constants $C_1, C_3$ (depending on $C_2$ but not on $n$) such that the following inequalities hold for $n$ sufficiently large and $0\leq k/n<M$.
  \begin{enumerate}[label=(\alph*)]
  \item \label{cor:2a}
    We have
    \begin{equation} \label{eq:outer_estimate_orthogonals}
      \int_{\realR \setminus [-C_2, C_2]} \lvert \widetilde{p}^{(n)}_k(x) \rvert^2 (e^x + e^{-x}) dx < e^{-C_1n}, \quad \int_{\realR \setminus [-C_2, C_2]} \lvert \widetilde{q}^{(n)}_k(x) \rvert^2 (e^x + e^{-x}) dx < e^{-C_1 n}.
    \end{equation}  
  \item \label{cor:2b}
    For any $z = x + iy$ such that $x \in \realR$ and $y \in (-n^{-1}C_2, n^{-1}C_2)$, with $F(z) = F_1(z)$ defined in \eqref{eq:defn_F} and \eqref{def Ft},
    \begin{equation} \label{eq:rough_bound_of_summand}
      \lvert e^{nF(z)} \widetilde{p}^{(n)}_k(z) \rvert < e^{\frac{1}{3} C_1 n}, \quad  \lvert e^{-nF(z)} \widetilde{q}^{(n)}_k(z) \rvert < e^{\frac{1}{3} C_1 n}.
    \end{equation}
  \item \label{cor:2c}
    We have
    \begin{equation} \label{eq:inner_estimate_orthogonals}
      \int_{[-C_2, C_2]} \lvert \widetilde{p}^{(n)}_k(x) \rvert^2 dx < e^{C_3 n}, \quad \int_{[-C_2, C_2]} \lvert \widetilde{q}^{(n)}_k(x) \rvert^2 dx < e^{C_3 n}.
    \end{equation}
  \item \label{cor:2d}
    For any $\delta>0$,  $\delta<k/n<M$, there exists a constant $C_4>0$ depending on $C_2$ such that  
    \begin{equation} \label{eq:estimate_of_essential_part}
      \int_{[-C_2, C_2]} \lvert e^{nF_{k/n}(x)} \widetilde{p}^{(n)}_k(x) \rvert^2 dx < C_4, \quad \int_{[-C_2, C_2]} \lvert e^{-nF_{k/n}(x)} \widetilde{q}^{(n)}_k(x) \rvert^2 dx < C_4.
    \end{equation}
  \end{enumerate}
\end{cor}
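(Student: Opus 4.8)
The plan is to fix $M>0$, choose a small $\epsilon>0$, let $\delta=\delta(\epsilon)$ be as in Lemma~\ref{enu:lemma zeros_2}, and treat the ranges $0\le k<\delta n$ and $\delta n\le k<Mn$ separately: in the first range only the elementary bounds of Lemmas~\ref{enu:lemma zeros_2} and~\ref{lem:zeros_additional} are needed, whereas in the second range $t=k/n$ stays in the compact set $[\delta,M]\subset(0,\infty)$, so all of Lemma~\ref{lem:asy_collected} applies uniformly. The constants are produced in the order $C_2$ (fixed large only once the $C_2$-dependence of everything else is visible), then $C_1=C_1(C_2)$, then $C_3$, then $C_4$; the factor $\tfrac13$ in \ref{cor:2b} appears because the exponential growth rate occurring there will be $\bigO(C_2)$ while $C_1$ will be of order $C_2^2$. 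The common mechanism behind \ref{cor:2a} and \ref{cor:2b} is that, by \eqref{eq:defn_widetilde_pq}, $|\widetilde p^{(n)}_k|^2$ and $|\widetilde q^{(n)}_k|^2$ both carry a factor $e^{-nV}$, and by strong convexity \eqref{convex} one has $V(x)\ge c_0x^2$ for $|x|$ large; this Gaussian decay dominates the at-most-exponential factor $e^{Cn}(|x|+C)^{2k}$ coming from the other quantities as soon as $|x|>C_2$. For $\widetilde q^{(n)}_k$ one additionally uses \eqref{as h} to handle $1/h^{(n)}_k$ when $t\in[\delta,M]$ and \eqref{eq:h_lower_degree} when $t<\delta$.

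For \ref{cor:2a}, the goal is a pointwise bound $|\widetilde p^{(n)}_k(x)|^2\le Ce^{Cn}(2|x|)^{2k}e^{-nV(x)}$ for $|x|>C_2$, with $C$ independent of $C_2$. When $\delta n\le k<Mn$ and $x\notin[a(t)-\epsilon,b(t)+\epsilon]$, this follows from the outer asymptotics \eqref{enu:lem:asy:2} --- which hold for \emph{all} such $x$, with the displayed ratio confined to the fixed compact $U_\epsilon\subset\realR\setminus\{0\}$ --- via the identity $(\wt{\gfn}_t+\gfn_t-\tfrac1tV-\ell_t)-(\wt{\gfn}_t-\gfn_t)=2\gfn_t-\tfrac1tV-\ell_t$ and the estimate $\Re\gfn_t(x)=\int\log|x-s|\,d\mu_t(s)\le\log(2|x|)$; when $0\le k<\delta n$ it follows directly from \eqref{eq:p_lower_degree}. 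Multiplying by $e^x+e^{-x}$ and integrating over $|x|>C_2$, the exponent of the integrand is $Cn+2Mn\log(2|x|)+|x|-nV(x)+\bigO(1)$, which is $\le-\tfrac12 nc_0x^2$ once $C_2$ is large, so the tail integral is $\le e^{-C_1n}$ with, say, $C_1=\tfrac14 c_0C_2^2$. The $\widetilde q^{(n)}_k$ version is identical using $(\wt{\gfn}_t+\gfn_t-\tfrac1tV-\ell_t)+(\wt{\gfn}_t-\gfn_t)=2\wt{\gfn}_t-\tfrac1tV-\ell_t$, where $\Re\wt{\gfn}_t(x)=\int\log|e^x-e^s|\,d\mu_t(s)\le x+C$ as $x\to+\infty$ and is bounded as $x\to-\infty$ --- in either tail still dominated by $e^{-nV(x)}$ --- together with \eqref{eq:q_lower_degree} and \eqref{eq:h_lower_degree}. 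For \ref{cor:2c}, on the compact $[-C_2,C_2]$ write $|\widetilde p^{(n)}_k(x)|=|e^{-nF_t(x)}|\,|e^{nF_t(x)}\widetilde p^{(n)}_k(x)|$ with $t=k/n$: for $t\in[\delta,M]$ the bulk/edge/outer asymptotics \eqref{enu:lem:asy:3}--\eqref{enu:lem:asy:5} give $|e^{nF_t(x)}\widetilde p^{(n)}_k(x)|\le Cn^{1/6}$ on $[-C_2,C_2]$, while $|e^{-nF_t(x)}|\le e^{Cn}$ there by joint continuity of $F_t(x)$; for $t<\delta$ use \eqref{eq:p_lower_degree} and boundedness of $V$ and of $t\ell_t$ on $[0,\delta]$. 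Integrating over an interval of length $2C_2$ gives \ref{cor:2c}, and likewise for $\widetilde q^{(n)}_k$.

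For \ref{cor:2b}, first record that the zeros of $p^{(n)}_k$ and of $x\mapsto q^{(n)}_k(e^x)$ are real (they form an AT system, as in the proof of Lemma~\ref{enu:lemma zeros_2}) and, for $C_2$ large, all lie in $[-C_2,C_2]$: for $k<\delta n$ by Lemma~\ref{enu:lemma zeros_2}, and for $\delta n\le k<Mn$ because the ratio in \eqref{enu:lem:asy:2} is bounded away from $0$, so $\widetilde p^{(n)}_k$ has no zero outside $[a(t)-\epsilon,b(t)+\epsilon]$, while $a(t),b(t)$ are bounded on $[\delta,M]$. Hence $|p^{(n)}_k(z)|\le(|z|+C_2)^k$ and $|q^{(n)}_k(e^z)|\le(e^{\Re z}+e^{C_2})^k$. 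Next, since $F=F_1$ is analytic on $\stripS$, for $z=x+iy$ with $|y|<C_2/n$ we have $|\Re F(z)-F(x)|\le C_2n^{-1}\sup_{|w-x|\le C_2/n}|F'(w)|$ and similarly $|\Re V(z)-V(x)|\le C_2n^{-1}\sup|V'|$; for $|x|\le C_2$ these are $\bigO(C_2^2/n)$, i.e.\ $\bigO(1)$ contributions to the exponent, and for $|x|>C_2$ one checks $|F'|$ stays bounded while $|V'|=\bigO(|x|)$, so the error remains negligible against $e^{-nV(x)}$. Combining with \eqref{eq:defn_widetilde_pq} gives $|e^{nF(z)}\widetilde p^{(n)}_k(z)|\le e^{\bigO(C_2^2)}e^{nF(x)}e^{-\frac n2V(x)}(|x|+2C_2)^ke^{-\frac k2\ell_t}$. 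For $|x|\le C_2$ the exponential rate in $n$ of the right-hand side is at most $\max_{[-C_2,C_2]}F_1+\bigO(\log C_2)=\bigO(C_2)$, which is $<\tfrac13 C_1$ for $C_2$ large, so the bound $e^{\frac13 C_1n}$ holds for $n$ large (the $e^{\bigO(C_2^2)}$ prefactor being absorbed); for $|x|>C_2$ the same estimate gives a bound $e^{Cn}e^{-cnx^2}<1$, so \ref{cor:2b} holds trivially there. The $\widetilde q^{(n)}_k$ case is identical, controlling $1/h^{(n)}_k$ by \eqref{as h} and \eqref{eq:h_lower_degree}.

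For \ref{cor:2d} we have $t=k/n\in(\delta,M)$; split $[-C_2,C_2]$ into the outer region $[-C_2,a(t)-\epsilon]\cup[b(t)+\epsilon,C_2]$, the two edge neighbourhoods $[a(t)-\epsilon,a(t)+\epsilon]$, $[b(t)-\epsilon,b(t)+\epsilon]$ of fixed width, and the bulk $[a(t)+\epsilon,b(t)-\epsilon]$. On the bulk, \eqref{eq:bulk_p} (with its index $k$ set to $0$, its parameter $u$ set to $0$, and $x^*=x$) gives $|e^{nF_t(x)}\widetilde p^{(n)}_k(x)|\le r_{t,0}(x)+\bigO(n^{-1})$, bounded uniformly for $t\in[\delta,M]$ at distance $\ge\epsilon$ from the edges; this piece contributes $\bigO(1)$. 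On the outer region, \eqref{enu:lem:asy:2} gives $|e^{nF_t(x)}\widetilde p^{(n)}_k(x)|\le\sup U_\epsilon$, since the exponent $\tfrac{nt}{2}(\wt{\gfn}_t+\gfn_t-\tfrac1tV-\ell_t)$ has negative real part by the variational inequality \eqref{var ineq} and \eqref{eq:properties_of_g_func_at_R}; this also contributes $\bigO(1)$. On the edge neighbourhood of $b(t)$, split at $b(t)$: for $x\ge b(t)$, \eqref{pj edge est} gives $\int|e^{nF_t}\widetilde p^{(n)}_k|^2=\bigO(n^{1/3})\int e^{-2cn^{2/3}(x-b(t))}dx=\bigO(n^{-1/3})$, and for $x\le b(t)$ the improved bound \eqref{pj edge est2} gives $\int|e^{nF_t}\widetilde p^{(n)}_k|^2=\bigO(1)\int(b(t)-x+n^{-2/3})^{-1/2}dx=\bigO(1)$; the neighbourhood of $a(t)$ is handled the same way. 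Summing the four pieces bounds $\int_{[-C_2,C_2]}|e^{nF_{k/n}(x)}\widetilde p^{(n)}_k(x)|^2dx$ by some constant $C_4$, and the $\widetilde q^{(n)}_k$ case uses \eqref{eq:bulk_q}, \eqref{pj edge est}, \eqref{pj edge est2} and the $q$-part of \eqref{enu:lem:asy:2} in exactly the same way. The main difficulties I anticipate are the constant bookkeeping between \ref{cor:2a} and \ref{cor:2b} --- keeping the rate $\bigO(C_2)$ below $\tfrac13 C_1=\Theta(C_2^2)$ --- and the soft-edge integrability in \ref{cor:2d}, where neither \eqref{pj edge est} (which blows up for $x<b(t)$) nor \eqref{pj edge est2} alone (which is only $L^2$-finite, not $L^\infty$-small, near $b(t)$) suffices, so one genuinely needs both, patched at $b(t)$.
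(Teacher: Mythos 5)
Your proof is correct and follows exactly the split the paper uses — Lemma~\ref{lem:zeros_additional} (together with Lemma~\ref{enu:lemma zeros_2}) for the low-degree range $0\le k<\delta n$, Lemma~\ref{lem:asy_collected} for $\delta n\le k<Mn$ with $t=k/n$ confined to a compact of $(0,\infty)$, and for part~\ref{cor:2d} the bulk asymptotics together with both edge bounds, patched at $b(t)$ (\resp\ $a(t)$) exactly as you describe. The paper's own proof is a one-sentence pointer to these lemmas, so you have simply supplied the bookkeeping. Two small points worth being aware of: first, in part~\ref{cor:2d} you also invoke the outer asymptotics \ref{enu:lem:asy:2} together with the variational inequality \eqref{var ineq} on the region between the support and $\pm C_2$; the paper's phrasing mentions only the bulk and edge bounds but this extra piece is indeed needed and your treatment of it is correct. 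Second, your assertion that $t\ell_t$ stays bounded on $(0,\delta]$ is used but not justified; it is true (it follows, e.g., from combining the lower bound \eqref{eq:h_lower_degree} with a matching elementary upper bound for $h^{(n)}_j$, or from the behaviour of the support width as $t\searrow0$), but in the spirit of the rest of your argument it deserves at least a one-line remark, since for $k<\delta n$ the factor $e^{\pm k\ell_{k/n}/2}$ in \eqref{eq:defn_widetilde_pq} cannot be controlled by the asymptotics \eqref{as h}, which only apply for $t$ in a compact of $(0,\infty)$.
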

\begin{proof}
  The  estimates in parts \ref{cor:2a} to \ref{cor:2c} are consequences of Lemma \ref{lem:asy_collected} for $\delta n\leq k<Mn$, and of Lemma \ref{lem:zeros_additional} for $0\leq k<\delta n$, with $\delta>0$ sufficiently small. They are all straightforward to prove. Estimate \ref{cor:2d} follows from the bulk asymptotics and the two upper bounds for the edge asymptotics in Lemma \ref{lem:asy_collected}.
\end{proof}

\section{Airy kernel}\label{section: Airy}

In this section we prove the Airy kernel limit \eqref{limAiryb} near the right endpoint $b$. The Airy kernel limit \eqref{limAirya} near the left endpoint can be proved in parallel, or alternatively we can use the symmetry of the model with respect to $V(x) \mapsto V(- x)+\frac{n-1}{n}x$ to map the left edge to the right. We omit the details.

In the rest of the section, we will need the auxiliary quantities $a(t)$, $b(t)$, $c_1(t)$, $c_0(t)$, $\alpha(t)$, $\beta(t)$ related to the equilibrium measure $\mu_t$ for $0<t\leq 1$. Whenever we write $a$, $b$, $c_1$, $c_0$, $\alpha$, $\beta$ without indicating the $t$-dependence, we refer to their values $a(1)$, $b(1)$, $c_1(1)$, $c_0(1)$, $\alpha(1)$, $\beta(1)$ for $t=1$. This notation is consistent with the one used in the introduction to state the results. We also remind the reader that $F(u)$ defined in \eqref{eq:defn_F} is equal to the $t = 1$ case of $F_t(u)$ defined in \eqref{def Ft}.

In this section we denote
\begin{equation} \label{eq:defn_x_and_y}
  u = b + \frac{\xi}{(\pi\beta n)^{2/3}}, \quad v = b + \frac{\eta}{(\pi\beta n)^{2/3}},
\end{equation}
where $\xi$ and $\eta$ are in a compact subset $U\subset \realR$.

In order to study large $n$ asymptotics, we split the correlation kernel $K_n(u,v)$ defined in \eqref{kernel} in $4$ parts:
\begin{equation}
K_n=K_n^{(1)}+K_n^{(2)}+K_n^{(3)}+K_n^{(4)},
\end{equation}
where, with $\widetilde{p}_j$ and $\widetilde{q}_j$ defined in \eqref{eq:defn_widetilde_pq},
\begin{align}
  K_n^{(1)} (u, v) = {}& \sum^{\lfloor \delta n\rfloor}_{j=0} \widetilde{p}_j^{(n)}(u) \widetilde{q}_j^{(n)}(v), & K_n^{(2)} (u, v) = {}& \sum^{\lfloor(1 - \delta')n\rfloor}_{j=\lfloor \delta n\rfloor +1} \widetilde{p}_j^{(n)}(u) \widetilde{q}_j^{(n)}(v), \\ 
  K_n^{(3)} (u, v) = {}& \sum^{\lfloor \left(1-Mn^{-2/3}\right)n\rfloor}_{j=\lfloor (1 - \delta')n\rfloor + 1} \widetilde{p}_j^{(n)}(u) \widetilde{q}_j^{(n)}(v), & K_n^{(4)} (u, v) = {}& \sum^{n-1}_{j=\lfloor \left(1-Mn^{-2/3}\right)n\rfloor +1} \widetilde{p}_j^{(n)}(u) \widetilde{q}_j^{(n)}(v).
\end{align}
Here we choose $\delta$ and $\delta'$ to be sufficiently small positive numbers whose values will be fixed in the proof of Lemma \ref{lemma K1}. $M$ can be any sufficiently large number independent of $n$. The proof of \eqref{limAiryb} is based on the following technical lemmas, which we will prove later in this section.
\begin{lemma} \label{lemma K1}
  There exists a constant $c>0$ such that for $n$ large enough
  \begin{equation} \label{estimateK1}
    \lvert e^{nF(u)} K_n^{(1)}(u, v) e^{-nF(v)} \rvert < e^{-cn},
  \end{equation}
  and
  \begin{equation}\label{estimateK2}
    \lvert e^{nF(u)} K_n^{(2)}(u, v) e^{-nF(v)} \rvert < e^{-c n}.
  \end{equation}
\end{lemma}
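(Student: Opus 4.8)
\textbf{Plan of proof for Lemma \ref{lemma K1}.} The two bounds treat the contributions of the biorthogonal polynomials of low degree ($0\le j\le\lfloor\delta n\rfloor$, the sum $K_n^{(1)}$) and of "bulk" degree ($\lfloor\delta n\rfloor<j\le\lfloor(1-\delta')n\rfloor$, the sum $K_n^{(2)}$) to the kernel evaluated at the two points $u,v$ near the right edge $b$. The common strategy is: (i) control each factor $\widetilde p^{(n)}_j(u)$ and $\widetilde q^{(n)}_j(v)$ individually, gaining an exponentially small factor from the fact that $u,v$ lie strictly to the \emph{right} of the support of $\mu_t$ for every $t=j/n$ in the relevant range; (ii) multiply the two bounds and sum over $j$, absorbing the (at most polynomial) number of terms and the conjugating factor $e^{nF(u)-nF(v)}$, which is bounded since $F$ is analytic near $b$ by \eqref{eq:continuity_of_F}.

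For $K_n^{(1)}$: fix $\epsilon>0$ small and let $\delta>0$ be as in Lemma \ref{enu:lemma zeros_2}, so that for $j\le\delta n$ all zeros of $p^{(n)}_j$ and $q^{(n)}_j(e^{\cdot})$ lie in $(x_{\min}-\epsilon,x_{\min}+\epsilon)$; by Lemma \ref{lemma zeros}\ref{enu:lemma zeros_1a} we may further shrink $\delta$ so that $x_{\min}+\epsilon<b=b(1)$, hence $u,v>x_{\min}+\epsilon$ for $n$ large. Then the crude bounds of Lemma \ref{lem:zeros_additional}\ref{enu:lem:asy:6} give $\lvert p^{(n)}_j(u)\rvert\le(u-x_{\min}+\epsilon)^{j}$ and $\lvert q^{(n)}_j(e^v)\rvert\le(e^v-e^{x_{\min}-\epsilon})^{j}$, while the norming constant is bounded below by \eqref{eq:h_lower_degree}. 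Inserting these together with the Gaussian factors $e^{-nV(u)/2}$, $e^{-nV(v)/2}$ from \eqref{eq:defn_widetilde_pq} and the $e^{\pm\frac j2\ell_{j/n}}$ prefactors (which are bounded in modulus uniformly for $j/n\in[0,\delta]$), the product $\lvert\widetilde p^{(n)}_j(u)\widetilde q^{(n)}_j(v)\rvert$ is bounded by $C^{j}e^{-n(V(u)+V(v))/2+nV(x_{\min})+nC(\epsilon)}$ for a constant $C$; summing the geometric-type series over $j\le\delta n$ and choosing $\delta$ small enough that $C^{\delta}<e^{\min_{[b-o(1),b+o(1)]}(V(\cdot)-V(x_{\min}))/4}$, and $\epsilon$ small enough that $C(\epsilon)$ is negligible, yields the bound $e^{-cn}$ after multiplying by the bounded factor $e^{nF(u)-nF(v)}$. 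The main point is that $V(u)+V(v)>2V(x_{\min})$ strictly when $u,v$ are bounded away from $x_{\min}$, which by strong convexity of $V$ gives the needed exponential gain, and $\delta$ must be chosen to beat the base $C$ of the geometric series.

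For $K_n^{(2)}$: here $j/n$ ranges over a compact $K=[\delta,1-\delta']\subset(0,1)$, so we may use the sharp asymptotics of Lemma \ref{lem:asy_collected}. For each such $t=j/n$, since $t<1$ the endpoint $b(t)<b(1)=b$ by the monotonicity $b'(t)>0$ of Lemma \ref{lemma endpoints}; choosing $\delta'>0$ small we get $b(t)\le b(1-\delta')=:b^*<b$, so for $n$ large $u,v>b^*\ge b(t)+c$ for a fixed $c>0$ uniformly in $j$. Thus $u,v$ lie outside $[a(t)-\epsilon_0,b(t)+\epsilon_0]$ for a suitable fixed $\epsilon_0$, and the outer estimate Lemma \ref{lem:asy_collected}\ref{enu:lem:asy:2} applies: $e^{nF_t(u)}\widetilde p^{(n)}_j(u)$ is, up to a bounded factor in $U_{\epsilon_0}$, equal to $e^{\frac j2(\widetilde\gfn_t(u)+\gfn_t(u)-\frac nj V(u)-\ell_t)}=e^{\frac n2(\gfn_{t}(u)+\widetilde\gfn_t(u)-\frac1tV(u)-\ell_t)\cdot t}$; by the variational inequality \eqref{var ineq}, the exponent $\gfn_{t,\pm}(u)+\widetilde\gfn_{t,\mp}(u)-\frac1tV(u)-\ell_t$ is strictly negative for $u\notin[a(t),b(t)]$, and since $u$ stays at distance $\ge c$ from $[a(t),b(t)]$ uniformly, this negativity is bounded away from $0$ uniformly in $t\in K$. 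The same holds for $\widetilde q^{(n)}_j(v)$ with the sign of $F_t(v)$ reversed. Hence $\lvert e^{nF_t(u)}\widetilde p^{(n)}_j(u)\,e^{-nF_t(v)}\widetilde q^{(n)}_j(v)\rvert\le e^{-c'n}$ uniformly in $j$. Finally we convert $F_t$ back to $F=F_1$ using the second estimate in \eqref{eq:continuity_of_F}: $\lvert nF(u)-nF_t(u)\rvert\le C_0 n\lvert 1-t\rvert\le C_0 n$, which is \emph{not} small — so this naive conversion is too lossy. Instead I would keep the $e^{nF_t}$ normalization inside the sum, writing $e^{nF(u)}K_n^{(2)}(u,v)e^{-nF(v)}=\sum_j e^{n(F(u)-F_t(u))}\big(e^{nF_t(u)}\widetilde p_j(u)\big)\big(e^{-nF_t(v)}\widetilde q_j(v)\big)e^{-n(F(v)-F_t(v))}$ and use the \emph{difference} estimate \eqref{eq:continuity_of_F_second}: $\lvert F(u)-F_t(u)-F(v)+F_t(v)\rvert\le C_0\lvert u-v\rvert\lvert 1-t\rvert\le C_0\lvert u-v\rvert$, and $\lvert u-v\rvert=O(n^{-2/3})\to0$, so the combined conjugation factor $e^{n(F(u)-F_t(u))-n(F(v)-F_t(v))}$ is bounded (indeed $\to1$). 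Summing $n$ terms each $\le e^{-c'n}$ times a bounded factor gives $\le ne^{-c'n}<e^{-cn}$.

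\textbf{Main obstacle.} The delicate point is precisely the last one: the conjugating factor $e^{nF(u)}\,\cdot\,e^{-nF(v)}$ must be matched against the natural normalization $e^{nF_t}$ coming from the asymptotics, and $\lvert F(u)-F_t(u)\rvert$ is only $O(1)$, not $o(1)$, so one cannot discard it term-by-term. The resolution is to exploit that $K_n$ is conjugation-covariant and only the \emph{difference} $F(u)-F(v)$ (equivalently $(F-F_t)(u)-(F-F_t)(v)$) enters, which is $O(\lvert u-v\rvert)=O(n^{-2/3})$ by \eqref{eq:continuity_of_F_second}; this is the reason the corollary on continuity of $F_t$ was stated with the mixed second-difference bound. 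A secondary technical nuisance is uniformity in $j$ of the strict negativity of the Euler–Lagrange exponent at $u,v$, which follows from continuity in $t\in K$ of the equilibrium data together with $u,v$ being uniformly bounded away from the (moving) support $[a(t),b(t)]$; for $K_n^{(1)}$ the analogous uniformity comes instead from the elementary polynomial bounds of Lemma \ref{lem:zeros_additional} and strong convexity of $V$, and the only care needed is to pick $\delta,\epsilon$ small enough to win the geometric series.
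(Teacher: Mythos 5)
Your proposal follows the paper's proof essentially step for step: the same cut of the sum at $j=\lfloor\delta n\rfloor$ and $j=\lfloor(1-\delta')n\rfloor$; for $K_n^{(1)}$ the elementary zero-based bounds of Lemma~\ref{lem:zeros_additional} combined with strong convexity of $V$ to beat the geometric factor $C^{\delta n}$; for $K_n^{(2)}$ the outer asymptotics of Lemma~\ref{lem:asy_collected}\ref{enu:lem:asy:2} together with the strict Euler--Lagrange inequality \eqref{var ineq}; and in both cases the observation (your ``main obstacle'') that one must conjugate through $e^{nF_t}$ term by term and invoke the mixed second-difference bound \eqref{eq:continuity_of_F_second}, since $\lvert F(u)-F_t(u)\rvert$ alone is $O(1)$. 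This is exactly the paper's route, and the key insight about the conjugating factor is identified correctly.

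Two small slips, neither fatal. First, in the $K_n^{(1)}$ argument you assert that the $e^{\pm\frac{j}{2}\ell_{j/n}}$ prefactors ``are bounded in modulus uniformly for $j/n\in[0,\delta]$''; this is unnecessary (and questionable, since $\ell_t$ is not obviously bounded as $t\searrow 0$) because those factors cancel identically in the product $\widetilde p^{(n)}_j(u)\widetilde q^{(n)}_j(v)$ by \eqref{eq:defn_widetilde_pq}. Second, in the $K_n^{(2)}$ argument the chain ``$u,v>b^*\ge b(t)+c$'' is not right: with $b^*=b(1-\delta')$ you have $b(t)\le b^*$ but certainly not $b^*\ge b(t)+c$ uniformly, since $b(t)\to b^*$ as $t\to 1-\delta'$. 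The correct uniform separation is $u,v-b(t)\ge(b-b^*)/2>0$ for $n$ large, because $u,v=b+O(n^{-2/3})$ while $b(t)\le b^*<b$. Both points are cosmetic and do not affect the validity of the argument.
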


\begin{lemma}\label{lemma K3}
  There exist constants $c',c''>0$ independent of $M, n$ such that for $M, n$ large enough,
\begin{equation}\label{estimateK3}
  \left\lvert e^{nF(u)} K^{(3)}_n(u, v) e^{-nF(v)} \right\rvert \leq c'n^{2/3}e^{-c'' M}.
\end{equation}
\end{lemma}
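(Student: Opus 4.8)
The plan is to exploit the fact that for $j$ in the range $(1-\delta')n < j \le (1-Mn^{-2/3})n$ the scaling point $u,v = b + O(n^{-2/3})$ lies \emph{outside} the support $[a(t),b(t)]$ of $\mu_t$ by a macroscopic-to-mesoscopic amount (since $b(t) < b(1) = b$ strictly for $t<1$ by Lemma \ref{lemma endpoints}, and $b - b(t) \asymp 1-t = j/n$ near $t=1$), so that the edge upper bounds of Lemma \ref{lem:asy_collected}\ref{enu:lem:asy:4} apply with exponential decay. First I would set $t = j/n$ and write, using \eqref{eq:defn_widetilde_pq}, $e^{nF(u)}\widetilde p^{(n)}_j(u) = e^{n(F(u)-F_t(u))}\cdot e^{nF_t(u)}\widetilde p^{(n)}_j(u)$ and likewise for $\widetilde q^{(n)}_j(v)$; the first factor is controlled by the second estimate in \eqref{eq:continuity_of_F}, which gives $|e^{n(F(u)-F_t(u))}| \le e^{C_0 n |1-t|} = e^{C_0(n-j)}$, and this needs to be beaten by the exponential gain coming from the edge bound. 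The edge bound \eqref{pj edge est} gives $|e^{nF_t(u)}\widetilde p^{(n)}_j(u)| = O(n^{1/6} e^{-cn^{2/3}(u - b(t))})$, and since $u - b(t) = (b - b(t)) + O(n^{-2/3}) \ge c'(1-t) - O(n^{-2/3}) = c'(n-j)/n - O(n^{-2/3})$, the exponent is $\le -c'' (n-j) n^{-2/3} \cdot (\text{something}) + O(n^{1/3})$; one must check that the constant in front of $(n-j)n^{-2/3}$ wins. Actually the cleaner bookkeeping: write $m = n-j$, so $m$ ranges over $Mn^{1/3} \le m < \delta' n$, and the $j$-th summand is bounded by $C n^{1/3} e^{2C_0 m} e^{-2c'' m\, n^{-1/3}}$ after multiplying the bounds for $\widetilde p$ and $\widetilde q$; hmm, the $e^{2C_0 m}$ term is problematic for large $m$, so I should instead use the \emph{sharper} bound \eqref{pj edge est2} from part \ref{enu:lem:asy:4a}, or, better, observe that for $j$ bounded away from $n$ (i.e.\ $t \le 1-\delta'$ held fixed on the lower end already handled by Lemma \ref{lemma K1}, and here $t \in [1-\delta', 1-Mn^{-2/3}]$ so $t$ is \emph{close} to $1$), $|1-t| = m/n$ is small, so $e^{2C_0 m}$ is not actually exponential in $n$ but the point is $m/n \to 0$; the dominant competition is between $e^{-2c'' m n^{-1/3}}$ (decaying in $m$) and $e^{2C_0 m/n \cdot n} = e^{2C_0 m}$ — no: $e^{2 C_0 n |1-t|} = e^{2C_0 m}$ grows in $m$. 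So the right move is to keep the full strength of the edge decay: $u - b(t) \ge c'(1-t)$ for $1-t$ small, hence $n^{2/3}(u-b(t)) \ge c' n^{2/3}(1-t) = c' m n^{-1/3}$, giving a factor $e^{-2c' c'' m n^{-1/3}}$ which for $m \ge Mn^{1/3}$ is $\le e^{-2c'c'' M}$ and decays geometrically as $m$ increases \emph{provided} $n^{-1/3} \cdot (\text{edge const}) > (\text{the }C_0\text{ from } F_t\text{-continuity})$, i.e.\ for $n$ large the edge decay rate per unit $m$, namely $c' c'' n^{-1/3}$, is smaller than $C_0$ — so this does \emph{not} immediately converge.

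Let me reconsider: the resolution must be that the $e^{n(F(u)-F_t(u))}$ factor is harmless because $F$ is real-analytic and one can absorb it more carefully, or that one uses estimate \ref{cor:2d} (the $L^2$ bound $\int |e^{nF_{k/n}(x)}\widetilde p^{(n)}_k(x)|^2 dx < C_4$) combined with pointwise control. I think the honest approach is: split further if needed, but the key estimate is that the per-term bound is $C n^{1/3} e^{-c(n-j)n^{-1/3}}$ with a genuinely negative exponent — this works if the edge bound, written in terms of the actual distance $u - b(t)$, dominates. Since $b(t) = b - |b'(1)|(1-t) + O((1-t)^2)$ with $b'(1) > 0$ by \eqref{a1b1der}, we get $u - b(t) = |b'(1)|(1-t) + O(n^{-2/3}) + O((1-t)^2) \ge \tfrac12 |b'(1)|(1-t)$ for $1-t \le \delta'$ with $\delta'$ small and $n$ large, hence $n^{2/3}(u - b(t)) \ge \tfrac12|b'(1)| n^{2/3}(1-t) = \tfrac12 |b'(1)|(n-j)n^{-1/3}$. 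The edge bound then gives $|e^{nF_t(u)}\widetilde p^{(n)}_j(u)| \le C n^{1/6} \exp(-\tfrac{c}{2}|b'(1)|(n-j)n^{-1/3})$, and similarly for $\widetilde q$; together with $|e^{n(F(u)-F_t(u))}| \le e^{C_0(n-j)}$ — wait, $(n-j)$ vs $(n-j)n^{-1/3}$: the bad factor is $e^{C_0(n-j)}$ which is much larger.

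So the real fix: $|F(u) - F_t(u)| \le C_0|1-t|\cdot$? No — re-examine \eqref{eq:continuity_of_F}: it says $|F_s(u) - F_t(u)| < C_0|s-t|$, so $|F(u) - F_t(u)| = |F_1(u) - F_t(u)| < C_0|1-t| = C_0 m/n$, hence $n|F(u)-F_t(u)| < C_0 m$. Indeed $e^{C_0 m}$. The only way geometric summability in $m$ can hold is if we do \emph{not} lose $e^{C_0 m}$, which means we should not pass through $F_t$ at all for the $\widetilde p \widetilde q$ \emph{product}: note $F(u) - F(v) = O(n^{-2/3})$ since $u - v = O(n^{-2/3})$, and the product $\widetilde p^{(n)}_j(u)\widetilde q^{(n)}_j(v)$ already has the $\ell_t$ factors cancelling; the natural object is $e^{nF_t(u)}\widetilde p^{(n)}_j(u) \cdot e^{-nF_t(v)}\widetilde q^{(n)}_j(v)$ whose product of edge bounds is $O(n^{1/3} e^{-cn^{2/3}(u-b(t)) - c n^{2/3}(v - b(t))})$, and then $e^{nF(u) - nF(v)} K^{(3)}_n = \sum_j e^{n(F(u) - F_t(u))} e^{-n(F(v) - F_t(v))} [e^{nF_t(u)}\widetilde p_j(u)][e^{-nF_t(v)}\widetilde q_j(v)]$ — the prefactor is $e^{n(F(u)-F_t(u)) - n(F(v) - F_t(v))}$, and by \eqref{eq:continuity_of_F_second} this is $\le e^{C_0 |u - v| |1 - t|} = e^{C_0 \cdot O(n^{-2/3}) \cdot m/n} = e^{O(m n^{-5/3})}$, which is bounded (even $\to 1$)! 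That is the key cancellation I was missing: the $F$–$F_t$ discrepancy enters with opposite signs at $u$ and $v$, and the second-difference estimate \eqref{eq:continuity_of_F_second} makes it negligible.

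With that, the per-term bound is $C n^{1/3} \exp(-cn^{2/3}[(u - b(t)) + (v - b(t))]) \le C n^{1/3}\exp(-c\, |b'(1)|(n-j)n^{-1/3})$ for $n$ large and $\delta'$ small (using $b(t) \le b - \tfrac12|b'(1)|(1-t)$ and $u,v \ge b - O(n^{-2/3})$, absorbing the $O(n^{-2/3})$ slack into the constant). Summing over $j$ from $\lfloor(1-\delta')n\rfloor+1$ to $\lfloor(1-Mn^{-2/3})n\rfloor$, i.e.\ over $m = n - j$ from $\sim Mn^{1/3}$ up to $\sim \delta' n$: the sum is $\le C n^{1/3}\sum_{m \ge Mn^{1/3}} e^{-c|b'(1)| m n^{-1/3}}$, a geometric series with ratio $e^{-c|b'(1)|n^{-1/3}} < 1$, whose sum is $\le C n^{1/3} \cdot \frac{e^{-c|b'(1)|Mn^{1/3} \cdot n^{-1/3}}}{1 - e^{-c|b'(1)|n^{-1/3}}} = C n^{1/3}\cdot \frac{e^{-c|b'(1)|M}}{1 - e^{-c|b'(1)|n^{-1/3}}} \le C' n^{2/3} e^{-c''M}$, using $1 - e^{-x} \ge \tfrac12 x$ for small $x > 0$ so that $(1-e^{-c|b'(1)|n^{-1/3}})^{-1} \le C n^{1/3}$. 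This is exactly \eqref{estimateK3} with $c' = C'$, $c'' = c|b'(1)|$. The main obstacle, as the scratch-work above shows, is organizing the estimate so that the $e^{C_0 n|1-t|}$ loss from replacing $F$ by $F_t$ does \emph{not} appear: one must use the second-difference bound \eqref{eq:continuity_of_F_second} on the combination at $u$ and $v$ together rather than the first-order bound \eqref{eq:continuity_of_F} term by term, and one must have the quantitative linear lower bound $b - b(t) \ge c(1-t)$ near $t = 1$, which follows from $b'(1) > 0$ in Lemma \ref{lemma endpoints}. A secondary point is justifying that $\delta'$ can be chosen small enough (depending only on $V$) that the $O((1-t)^2)$ error in $b - b(t)$ and the $O(n^{-2/3})$ shift in $u,v$ are both dominated on the whole range; this is routine given the smoothness of $b(t)$.
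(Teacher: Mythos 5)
Your final plan coincides with the paper's proof: both combine the edge upper bound \eqref{pj edge est}, the second-difference estimate \eqref{eq:continuity_of_F_second} applied to the prefactor $e^{n(F(u)-F_t(u))-n(F(v)-F_t(v))}$, the linear separation $b-b(t)\gtrsim 1-t$ coming from $b'(1)>0$ in Lemma \ref{lemma endpoints}, and a geometric sum over $m=n-j$ producing the extra $n^{1/3}$. One arithmetic slip in the scratch work should be corrected, though: \eqref{eq:continuity_of_F_second} bounds $F_1(u)-F_1(v)-F_t(u)+F_t(v)$ itself, so after multiplying by $n$ the prefactor is $e^{nC_0|u-v|\,|1-t|}=e^{O(mn^{-2/3})}$, not $e^{O(mn^{-5/3})}$, and over the full range $m\le\delta'n$ this can be as large as $e^{O(n^{1/3})}$ — it is \emph{not} bounded, let alone tending to $1$. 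The argument survives because $mn^{-2/3}\ll mn^{-1/3}$, so the edge gain $e^{-cmn^{-1/3}}$ still dominates; that comparison of exponential rates, rather than boundedness of the prefactor, is what you should state (and is implicitly what your final paragraph relies on).
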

\begin{lemma}\label{lemma K4}
  For any $M > 0$, 
 \begin{equation}\label{K Airy 2}
   \lim_{n\to\infty} \frac{1}{(\pi\beta n)^{2/3}} e^{nF(u)} K_n^{(4)}(u, v) e^{-nF(v)} = \int_0^{(\pi\beta)^{2/3} b'(1) M} \Ai(\xi + y)\Ai(\eta + y)dy.
 \end{equation}  
\end{lemma}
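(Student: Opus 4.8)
\textbf{Proof plan for Lemma \ref{lemma K4}.}

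The plan is to use the microscopic edge asymptotics \eqref{pj Airy}--\eqref{qj Airy} to replace each summand $\widetilde p_j^{(n)}(u)\widetilde q_j^{(n)}(v)$ by an explicit expression in terms of Airy functions, and then to recognize the resulting sum as a Riemann sum converging to the integral on the right-hand side of \eqref{K Airy 2}. First I would parametrize the summation index in the regime $\lfloor(1-Mn^{-2/3})n\rfloor+1\le j\le n-1$ by writing $j=n-m$, so that $m$ ranges over $1\le m\le \lceil Mn^{1/3}\rceil$ and $t=j/n=1-m/n$. Since $m/n=\bigO(n^{-2/3})\to 0$, we have $t\to 1$ uniformly, so $b(t)\to b$, $\beta(t)\to\beta$, $c_1(t)\to c_1$, $s_b(t)\to s_b$, and by the smoothness from Lemma \ref{lemma endpoints}, $b(t)=b-b'(1)\tfrac{m}{n}+\bigO((m/n)^2)$. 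The key point is that $u$ and $v$ are attached to the \emph{fixed} edge $b=b(1)$, whereas the asymptotics in Lemma \ref{lem:asy_collected}\ref{enu:lem:asy:3} are stated relative to the \emph{moving} edge $b(t)$; so I must re-express $u=b+\xi(\pi\beta n)^{-2/3}$ as $b(t)+\hat u/(\pi\beta(t)j)^{2/3}$ with $\hat u = \xi + (\pi\beta)^{2/3}b'(1)m + o(1)$, uniformly for $m$ in the relevant range, and similarly $v\leftrightarrow \hat v=\eta+(\pi\beta)^{2/3}b'(1)m+o(1)$. Note $\hat u,\hat v$ stay in a compact set only after we also truncate at $m\le M n^{1/3}$, which is exactly why the upper limit $(\pi\beta)^{2/3}b'(1)M$ appears.

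Next I would substitute the asymptotic formulas. Multiplying \eqref{pj Airy} by \eqref{qj Airy} and using $(\pi\beta(t)j)^{1/6}\cdot(\pi\beta(t)j)^{1/6}=(\pi\beta(t)j)^{1/3}=(\pi\beta n)^{1/3}(1+o(1))$, together with $\beta(t)\to\beta$, $c_1(t)\to c_1$, $s_b(t)\to s_b$, the prefactors combine as
\begin{equation}
\frac{\sqrt{2\pi}}{c_1^{1/2}(s_b-\tfrac12)s_b^{1/4}}\cdot\frac{1}{\sqrt{2\pi}\,c_1^{1/2}s_b^{1/4}}=\frac{1}{c_1(s_b-\tfrac12)s_b^{1/2}}.
\end{equation}
Here one uses the identities $s_b-\tfrac12 = -(\tfrac12+s_b)\big/\big(c_1(\tfrac12+s_b)^2/ ?\big)$ — more precisely the relation $c_1(\tfrac12-s_b)(\tfrac12+s_b) = -1$ coming from $s_b^2=\tfrac14+\tfrac1{c_1}$, hence $c_1(s_b-\tfrac12)s_b^{1/2}=-(\tfrac12+s_b)^{-1}\cdot$(something), to check that this prefactor equals $(\pi\beta n)^{-1/3}\cdot(\pi\beta n)^{1/3}$-normalized to exactly $1$; I expect the bookkeeping to reduce, via the formula \eqref{beta} for $\beta=\beta(1)$, precisely to the clean statement that
\begin{equation}
e^{nF(u)}\widetilde p_j^{(n)}(u)\,e^{-nF(v)}\widetilde q_j^{(n)}(v)=\frac{(\pi\beta n)^{1/3}}{(\pi\beta n)^{2/3}}\Big[\Ai(\hat u)\Ai(\hat v)+\bigO(n^{-1/3})\Big],
\end{equation}
after also accounting for the mismatch between $F(u)$ (at $t=1$) and $F_t(u)$ (at $t=j/n$): by \eqref{eq:continuity_of_F} and \eqref{eq:continuity_of_F_second}, $n(F(u)-F_t(u))-n(F(v)-F_t(v))=\bigO(n\cdot|u-v|\cdot|1-t|)=\bigO(n\cdot n^{-2/3}\cdot n^{-2/3})=\bigO(n^{-1/3})\to0$, so this mismatch is harmless. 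Note also the factors $(-1)^j$ in \eqref{pj Airy2}--\eqref{qj Airy2} do not appear here since we use the right-edge formulas \eqref{pj Airy}--\eqref{qj Airy}, which carry no sign.

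Having done this, $\tfrac{1}{(\pi\beta n)^{2/3}}e^{nF(u)}K_n^{(4)}e^{-nF(v)}$ becomes $\tfrac{1}{(\pi\beta n)^{1/3}}\sum_{m=1}^{\lceil Mn^{1/3}\rceil}\big[\Ai(\hat u_m)\Ai(\hat v_m)+\bigO(n^{-1/3})\big]$ where $\hat u_m=\xi+(\pi\beta)^{2/3}b'(1)\tfrac{m}{n}\cdot n^{?}$ — carefully, $\hat u_m-\xi = (\pi\beta)^{2/3}(b-b(t))n^{2/3}/\beta^{?}$; with $b-b(t)\approx b'(1)m/n$ this is $(\pi\beta)^{2/3}b'(1)\,m\,n^{-1/3}$, so setting $y_m=(\pi\beta)^{2/3}b'(1)\,m\,n^{-1/3}$ gives a partition of $[0,(\pi\beta)^{2/3}b'(1)M]$ with mesh $(\pi\beta)^{2/3}b'(1)n^{-1/3}$, and the sum is exactly $\sum_m \Ai(\xi+y_m)\Ai(\eta+y_m)\,\Delta y_m\big/\big((\pi\beta)^{2/3}b'(1)\big)\cdot(\pi\beta)^{2/3}b'(1) = \sum_m\Ai(\xi+y_m)\Ai(\eta+y_m)\Delta y_m$ up to the normalization $(\pi\beta n)^{-1/3}$ matching $\Delta y_m$. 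The error term contributes $\lceil Mn^{1/3}\rceil\cdot\bigO(n^{-1/3})\cdot(\pi\beta n)^{-1/3}=\bigO(n^{-1/3})\to0$. Since $\Ai$ decays super-exponentially, the Riemann sum converges to $\int_0^{(\pi\beta)^{2/3}b'(1)M}\Ai(\xi+y)\Ai(\eta+y)\,dy$, uniformly in $\xi,\eta\in U$, which is \eqref{K Airy 2}.

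\textbf{Main obstacle.} The delicate part is the \emph{uniformity in $m$} when transferring from the moving-edge coordinates $(\hat u,\hat v)$ of Lemma \ref{lem:asy_collected}\ref{enu:lem:asy:3} to the fixed-edge coordinates: the cited asymptotics hold uniformly for the scaled variable in a fixed compact $U$, but $\hat u_m$ grows linearly with $m$ up to size $\sim M$, so we genuinely need the compact to be $[-\,\mathrm{const},\,(\pi\beta)^{2/3}b'(1)M+\mathrm{const}]$ and we need the $\bigO(n^{-1/3})$ in \eqref{pj Airy}--\eqref{qj Airy} to be uniform over that whole range and over $t=1-m/n$ in a shrinking-but-controlled neighborhood of $1$ — which is granted by the statement "uniformly in $t=j/n\in K$ and $u,v\in U$" applied with $K$ a fixed compact containing $1$ in its interior and $U$ the enlarged compact. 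A secondary technical point is controlling the replacement $b(t)\mapsto b$, i.e.\ showing $\hat u_m = \xi+(\pi\beta)^{2/3}b'(1)\,m\,n^{-1/3}+\bigO(m^2 n^{-4/3}+n^{-1/3})$ with the error uniform for $m\le Mn^{1/3}$ (so that $m^2n^{-4/3}\le M^2 n^{-2/3}\to0$), using the smoothness of $b(\cdot)$ and $\beta(\cdot)$ from Lemma \ref{lemma endpoints}; and likewise that the prefactor simplification to $1$ is exact, not merely asymptotic, which is a finite computation with $c_1,s_b,\beta$ via \eqref{beta} and $s_b^2=\tfrac14+\tfrac1{c_1}$.
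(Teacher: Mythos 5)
Your proposal follows essentially the same route as the paper's proof: Taylor-expand $b(t),\beta(t),c_1(t),s_b(t)$ around $t=1$ for $j/n\in[1-Mn^{-2/3},1]$, re-express $u,v$ in the moving-edge coordinates of Lemma \ref{lem:asy_collected}\ref{enu:lem:asy:3}, absorb the $F\mapsto F_{j/n}$ mismatch via \eqref{eq:continuity_of_F_second}, recognize a Riemann sum, and simplify the prefactor via \eqref{a1b1der} and $s_b^2=\tfrac14+\tfrac1{c_1}$.

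One small inaccuracy worth flagging: the prefactor does not reduce to ``exactly $1$'' at the level of a single summand, as your displayed intermediate formula suggests. After multiplying \eqref{pj Airy} and \eqref{qj Airy}, the per-term factor is $\frac{(\pi\beta n)^{1/3}}{c_1(s_b-\frac12)s_b^{1/2}}$, and the factor $c_1(s_b-\frac12)s_b^{1/2}$ survives until after the Riemann-sum conversion brings in the Jacobian $1/b'(1)$ and the overall $(\pi\beta n)^{-2/3}$; only the combination satisfies $\pi\beta\, b'(1)\, c_1(s_b-\tfrac12)s_b^{1/2}=1$ by \eqref{a1b1der} and $c_1(s_b^2-\tfrac14)=1$. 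This is the exact bookkeeping the paper carries out. Your ``main obstacle'' paragraph correctly identifies the genuine subtlety (uniformity of the cited edge asymptotics over the enlarged compact $U\supset[-\mathrm{const},(\pi\beta)^{2/3}b'(1)M+\mathrm{const}]$ and over $t$ near $1$), which the paper handles in the same way by applying Lemma \ref{lem:asy_collected}\ref{enu:lem:asy:3} with $K$ a compact neighborhood of $1$ and $U$ enlarged to cover the $M$-truncated range.
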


The Airy limit \eqref{limAiryb} follows from the lemmas above. By the identity \eqref{eq:AiryCD}, we have that for any $\epsilon > 0$, if $M$ is large enough, then
\begin{equation} \label{eq:ineq_Airy_1}
  \left\lvert \frac{\Ai(\xi)\Ai'(\eta)-\Ai'(\xi)\Ai(\eta)}{\xi - \eta} - \int_0^{(\pi\beta)^{2/3} b'(1) M} \Ai(\xi + y)\Ai(\eta + y)dy \right\rvert < \frac{\epsilon}{3}.
\end{equation}
By Lemma \ref{lemma K3}, if $M$ is large enough, then for all large enough $n$,
\begin{equation} \label{eq:ineq_Airy_2}
  \left\lvert \frac{1}{(\pi \beta n)^{2/3}} e^{nF(u)} K^{(3)}_n(u, v) e^{-nF(v)} \right\rvert \leq \frac{\epsilon}{3}.
\end{equation}
At last by Lemmas \ref{lemma K1} and \ref{lemma K4}, for $n$ large enough,
\begin{equation} \label{eq:ineq_Airy_3}
  \left\lvert \frac{1}{(\pi \beta n)^{2/3}} e^{nF(u)}\left( K_n^{(1)}(u, v) + K_n^{(2)}(u, v) \right)e^{-nF(v)} \right\rvert < \frac{\epsilon}{3}.
\end{equation}
The limit identity \eqref{K Airy 2}, the inequalities \eqref{eq:ineq_Airy_1}, \eqref{eq:ineq_Airy_2}, \eqref{eq:ineq_Airy_3} together with the arbitrariness of $\epsilon$ yield \eqref{limAiryb}.

\begin{proof}[Proof of Lemma \ref{lemma K1}]
  First we prove \eqref{estimateK1}. Assume that $\epsilon > 0$ is small enough such that $b > x_{\min} + 2\epsilon$ and $V(x_{\min}) + C(\epsilon) < V(b - \epsilon)$, where $C(\epsilon)$ is the constant in Lemma \ref{lem:zeros_additional}\ref{enu:lem:asy:7}. Then we take $\tilde{\delta}$ to be a small enough positive constant such that $a(\tilde{\delta})$ and $b(\tilde{\delta})$ are inside $(x_{\min} - \epsilon, x_{\min} + \epsilon)$. It is straightforward to check that we can choose $\delta<\tilde\delta$ positive and small enough such that we have the following inequalities for all $t \in [0, \delta]$ and $x, y \in [b - \epsilon, b + \epsilon]$:
  \begin{equation} \label{eq:estimate_K^1}
    \begin{split}
      & \frac{e^{-\frac{1}{2}V(x)} e^{-\frac{1}{2}V(y)}}{e^{-(V(x_{\min}) + C(\epsilon))}} (x - (x_{\min} - \epsilon))^t (e^y - e^{x_{\min} - \epsilon})^t \\
      \leq {}& \frac{e^{-V(b - \epsilon)}}{e^{-(V(x_{\min}) + C(\epsilon))}} ((b - \epsilon) - (x_{\min} - \epsilon))^{\delta} (e^{b - \epsilon} - e^{x_{\min} - \epsilon})^{\delta} \\
      < {}& e^{-C'},
    \end{split}
  \end{equation}
  where $C'$ is a positive constant. Using the estimates \eqref{eq:p_lower_degree}, \eqref{eq:q_lower_degree} and \eqref{eq:h_lower_degree}, we have that \eqref{eq:estimate_K^1} implies that for all $j < \delta n$,
  \begin{equation}
    \lvert \widetilde{p}_j^{(n)}(u) \widetilde{q}_j^{(n)}(v) \rvert = \lvert (h^{(n)}_j)^{-1} p_j^{(n)}(u) q_j^{(n)}(e^v) \rvert < e^{-C'n}.
  \end{equation}
  Finally using the estimate \eqref{eq:continuity_of_F} for the term $e^{n(F(u) - F(v))}$ and the fact that $|u-v|\leq C''n^{-2/3}$, we obtain 
  \begin{equation}
    \lvert \widetilde{p}_j^{(n)}(u) \widetilde{q}_j^{(n)}(v)e^{n(F(u)-F(v))} \rvert < e^{-(C'-\frac{C''}{n^{2/3}} C_0)n}.
  \end{equation}
  Taking the sum for $j$ from $0$ to $\lfloor \delta n\rfloor$, the inequality \eqref{estimateK1} follows with any $c <C'$ for $n$ sufficiently large.

  Next we prove \eqref{estimateK2}. For $\delta < j/n \leq 1 - \delta'$, there exists $\epsilon' > 0$ such that both $u$ and $v$ are greater than $b(j/n) + \epsilon'$, if $n$ is large enough. We can thus use the outer asymptotics in  Lemma \ref{lem:asy_collected}\ref{enu:lem:asy:2}. We obtain, uniformly in $t=j/n$,
  \begin{equation} \label{eq:est_K_2}
    \lvert e^{n(F_{t}(u) - F_{t}(v))} \widetilde{p}_j^{(n)}(u) \widetilde{q}_j^{(n)}(v) \rvert = 
    \bigO \left(e^{\frac{j}{2} (\widetilde{\gfn}_{t}(u) + \gfn_{t}(u) - \frac{1}{t}V(u) - \ell_{t})} e^{\frac{j}{2} (\widetilde{\gfn}_{t}(v) + \gfn_{t}(v) - \frac{1}{t}V(v) - \ell_{t})} \right),
  \end{equation}
  as $j,n\to\infty$.
  Using the inequality \eqref{var ineq}, we conclude that for all large enough $n$ and $\delta n < j \leq (1 - \delta')n$, there is $C''' > 0$ such that
  \begin{equation}
    \lvert e^{n(F_{t}(u) - F_{t}(v))} \widetilde{p}_j^{(n)}(u) \widetilde{p}_j^{(n)}(v) \rvert < e^{-C'''n}.
  \end{equation}
  Finally, by \eqref{eq:continuity_of_F_second}, for $n$ large enough,
  \begin{equation}
    e^{n(F_1(u) - F_1(v)) - n(F_{j/n}(u) - F_{j/n}(v))} < e^{nC_0(1 - \frac{j}{n}) \frac{\lvert \xi - \eta \rvert}{(\pi\beta n)^{2/3}}} < e^{\frac{C'''}{2} n},
  \end{equation}
  where $C_0$ is the same as in \eqref{eq:continuity_of_F_second}, and then
  \begin{equation}    
    \begin{split}
      & \lvert e^{nF(u)} K_n^{(2)}(u, v) e^{-nF(v)} \rvert  \\
      \leq {}& \sum^{\lfloor(1 - \delta')n\rfloor}_{j=\lfloor \delta n\rfloor +1} e^{n(F_1(u) - F_1(v)) - n(F_{j/n}(u) - F_{j/n}(v))} \lvert e^{n(F_{j/n}(u) - F_{j/n}(v))} \widetilde{p}_j^{(n)}(u) \widetilde{q}_j^{(n)}(v) \rvert \\
      < & ne^{-\frac{C'''}{2} n}.
    \end{split}
  \end{equation}
  Thus we prove \eqref{estimateK2} for any $c < C'''/2$ if $n$ is sufficiently large.
\end{proof}

\begin{proof}[Proof of Lemma \ref{lemma K3}]
  For all $t=j/n \in [1 - \delta', 1]$, we use the upper bound at the edge in Lemma \ref{lem:asy_collected}\ref{enu:lem:asy:4}. Together with  \eqref{eq:continuity_of_F_second}, we obtain
  \begin{equation} \label{eq:essential_step_K^3}
    \begin{split}
       &\lvert e^{nF(u)} \widetilde{p}_j(u) \widetilde{q}_j(v) e^{-nF(v)} \rvert = {} e^{n(F_1(u) - F_1(v) - F_{t}(u) + F_{t}(v))} \lvert e^{nF_{t}(u)} \widetilde{p}_j(u) \widetilde{q}_j(v) e^{-nF_{t}(v)} \rvert \\
       &\qquad\qquad\leq {} C'n^{1/3}\exp \left( \frac{C_0 n^{\frac{1}{3}}}{(\pi \beta)^{\frac{2}{3}}} \lvert \xi - \eta \rvert \lvert 1 - t \rvert \right) e^{-cn^{2/3}(u - b(j/n))} e^{-cn^{2/3}(v - b(j/n))}.
    \end{split}
  \end{equation}
  where $C_0$ is the constant in \eqref{eq:continuity_of_F_second} and $c$ is the constant in \eqref{pj edge est}. By Lemma \ref{lemma endpoints}, $b(t)$ is differentiable at $t=1$ with $b'(1)>0$, and this implies the existence of a constant $C'' < 0$ such that 
\begin{equation}
  \lvert b(1) - b(t) \rvert \geq C''(1 - t)
\end{equation}
for $t \in [1 - \delta', 1]$, where we assume that $\delta'$ is small enough. Hence we have the inequality
\begin{multline} \label{eq:one_term_in_Lemma_K_3}
  \lvert e^{nF(u)} \widetilde{p}_j(u) \widetilde{q}_j(v) e^{-nF(v)} \rvert \\
  \leq C' n^{1/3}\exp \left( \frac{C_0 n^{\frac{1}{3}}}{(\pi \beta)^{\frac{2}{3}}} \lvert \xi - \eta \rvert \lvert 1 - \frac{j}{n} \rvert \right) e^{-c \left( C'' n^{\frac{2}{3}} (1 - \frac{j}{n}) + \frac{\xi}{(\pi \beta)^{\frac{2}{3}}} \right)} e^{-c \left( C'' n^{\frac{2}{3}} (1 - \frac{j}{n}) + \frac{\eta}{(\pi \beta)^{\frac{2}{3}}} \right)}.
\end{multline}
Taking the sum for all $j$ from $\lfloor (1-\epsilon_2)n+1\rfloor$ to $\lfloor \left(1-Mn^{-2/3}\right)n\rfloor$, we prove the lemma with $c', c''$ in \eqref{estimateK3} determined by $c, C_0, C', C''$ in \eqref{eq:one_term_in_Lemma_K_3}.
\end{proof}

\begin{proof}[Proof of Lemma \ref{lemma K4}]
  By Taylor expansion, for all $j/n \in [1 - M n^{-2/3}, 1]$, $n\to\infty$,
  \begin{gather} \label{b expansion}
    b(j/n) = b - b'(1)\frac{n - j}{n} + \bigO(n^{-\frac{4}{3}}), \\
    \beta(j/n) = \beta + \bigO(n^{-\frac{2}{3}}), \quad c_1(j/n) = c_1 + \bigO(n^{-\frac{2}{3}}), \quad s_b(j/n) = s_b + \bigO(n^{-\frac{2}{3}}).
  \end{gather}
  The expressions \eqref{eq:defn_x_and_y} for $u$ and $v$ can be rewritten as
  \begin{equation}
    u = b(j/n) + \frac{\xi + \frac{n - j}{n^{\frac{1}{3}}} b'(1) (\pi \beta)^{\frac{2}{3}}}{(\pi \beta j)^{\frac{2}{3}}} + \bigO(n^{-\frac{4}{3}}), \quad v = b(j/n) + \frac{\eta + \frac{n - j}{n^{\frac{1}{3}}} b'(1) (\pi \beta)^{\frac{2}{3}}}{(\pi \beta j)^{\frac{2}{3}}} + \bigO(n^{-\frac{4}{3}}).
  \end{equation}
  By Lemma \ref{lem:asy_collected}\ref{enu:lem:asy:3}, we have
  \begin{equation*} \label{eq:essential_step_K^4}
    \begin{split}
      & e^{nF(u)} \widetilde{p}_j(u) \widetilde{q}_j(v) e^{-nF(v)} \\
      = {}& e^{n(F_1(u) - F_1(v)) - n(F_{j/n}(u) - F_{j/n}(v))} \left( e^{nF_{j/n}(u)} \widetilde{p}_j(u) \right) \left( e^{-nF_{j/n}(v)} \widetilde{q}_j(v) \right) \\
      = {}& e^{n(F_1(u) - F_1(v)) - n(F_{j/n}(u) - F_{j/n}(v))} \frac{ (\pi \beta(\frac{j}{n})j)^{\frac{1}{3}}}{c_1(\frac{j}{n}) (s_b(\frac{j}{n}) - \frac{1}{2}) s_b(\frac{j}{n})^{\frac{1}{2}}} \\
      & \times \Ai \left( \xi + \frac{n - j}{n^{\frac{1}{3}}} b'(1) (\pi \beta)^{\frac{2}{3}} + \bigO(n^{-\frac{2}{3}}) \right) \Ai \left( \eta + \frac{n - j}{n^{\frac{1}{3}}} b'(1) (\pi \beta)^{\frac{2}{3}} + \bigO(n^{-\frac{2}{3}}) \right) (1 + \bigO(n^{-\frac{1}{3}})).
      \end{split}
      \end{equation*}
     By \eqref{eq:continuity_of_F_second}, we have $e^{n(F_1(u) - F_1(v)) - n(F_{j/n}(u) - F_{j/n}(v))} = 1 + \bigO(n^{-1/3})$, and this implies 
       \begin{equation} \label{eq:essential_step_K^4-2}
    \begin{split}
      & e^{nF(u)} \widetilde{p}_j(u) \widetilde{q}_j(v) e^{-nF(v)} \\
      = {}& \frac{ (\pi \beta n)^{\frac{1}{3}}}{c_1 (s_b - \frac{1}{2}) s_b^{\frac{1}{2}}} \Ai \left( \xi + \frac{n - j}{n^{\frac{1}{3}}} b'(1) (\pi \beta)^{\frac{2}{3}} \right) \Ai \left( \eta + \frac{n - j}{n^{\frac{1}{3}}} b'(1) (\pi \beta)^{\frac{2}{3}} \right) (1 + \bigO(n^{-\frac{1}{3}})).
    \end{split}
  \end{equation}
 Taking the sum of the left-hand side of \eqref{eq:essential_step_K^4-2} with respect to $j$ from $\lfloor \left(1-Mn^{-2/3}\right)n\rfloor +1$ to $n - 1$ and replacing the sum by an integral with small correction, we have that
  \begin{multline}
    e^{nF(u)} K_n^{(4)}( u, v) e^{-nF(v)} = \\
    \frac{n^{\frac{2}{3}}}{c_1 (s_b - \frac{1}{2})s^{\frac{1}{2}}_b (\pi \beta)^{\frac{1}{3}} b'(1)} \int_0^{(\pi\beta)^{2/3} b'(1) M} \Ai(u+y)\Ai(v+y)dy (1 + \bigO(n^{-\frac{1}{3}})).
  \end{multline}
Using expression \eqref{a1b1der} for $b'(1)$ in terms of $s_b$ and \eqref{eq:s_b_in_c_10} and \eqref{def:ab} to obtain an expression for $s_b$ in terms of $c_1$, we obtain the result.
\end{proof}

\section{Sine kernel}\label{section: sine}

\subsection{Preliminaries}

In this section we denote
\begin{equation}\label{def uv}
  u = x^*+\frac{\xi}{\pi\psi(x^*)n}, \quad v = x^*+\frac{\eta}{\pi\psi(x^*)n},
\end{equation}
where $x^* \in (a, b)$, and $\xi, \eta$ are in a compact subset $U \subset \compC$. Although the kernel function $K^{(n)}_n(u, v)$ has a probabilistic meaning only if $u, v$ are real, for technical reasons to be explained below, we need to consider complex $\xi$ and $\eta$.

The goal of this section is to compute $e^{nF(u)} K^{(n)}_n(u, v) e^{-nF(v)}$ to prove \eqref{limsin}. Instead of computing it directly, we consider
\begin{multline} \label{eq:alternative_kernel_product}
  (\exp_{\lfloor \delta n \rfloor}(u) - \exp(v)) e^{nF(u)} K^{(n)}_n(u, v) e^{-nF(v)}\\ = e^{n(F(u) - F(v))} (\exp_{\lfloor \delta n \rfloor}(u) - \exp(v)) \sum^{n - 1}_{k = 0} \widetilde{p}^{(n)}_k(u) \widetilde{q}^{(n)}_k(v), 
\end{multline}
where $\delta$ is a small enough positive number and $\exp_k$ is the truncated exponential sum
\begin{equation}
  \exp_k(x) = \sum^{k}_{i = 0} \frac{x^i}{i!}.
\end{equation}

We need several estimates of $\exp_k(z)$. The first ones are very rudimentary: For $k, l \in \intZ_+$ and $z = x + iy$,
\begin{equation} \label{eq:rudimentary_estimate_exp_k_1}
  \lvert \exp_k(z) \rvert \leq e^x + e^{-x}, \quad \lvert \exp(z) - \exp_k(z) \rvert \leq e^x + e^{-x}, \quad \lvert \exp_k(z) - \exp_l(z) \rvert \leq e^x + e^{-x}.
\end{equation}
The next one can be shown using Stirling's formula. For any $C_2 > 0$, there exists a constant $C_5$, depending only on $C_2$, such that for all $k < l \in \intZ_+$
\begin{equation} \label{eq:Stirling_consequence}
  \max_{\lvert z \rvert < C_2} \lvert \exp(z) - \exp_k(z) \rvert < C_5 e^{-\frac{k}{2} \log(k)}, \quad \max_{\lvert z \rvert < C_2} \lvert \exp_l(z) - \exp_k(z) \rvert < C_5 e^{-\frac{k}{2} \log(k)}.
\end{equation}

Later in Section \ref{sec:practical_limit_Sine}, we will prove that for $\xi, \eta$ in a compact $K\subset\mathbb C$, uniformly
\begin{equation} \label{eq:practical_limit_Sine}
  \lim_{n \to \infty} (\exp_{\lfloor \delta n \rfloor}(u) - \exp(v)) e^{nF(u)} K^{(n)}_n(u, v) e^{-nF(v)} = \frac{e^{x^*}}{\pi} \sin\pi(\xi - \eta).
\end{equation}
With the help of \eqref{eq:Stirling_consequence}, when $\xi \neq \eta$, this is equivalent to
\begin{equation} \label{eq:limit_id_for_sine}
  \lim_{n \to \infty} \frac{e^{n(F(u) - F(v))}}{\pi\psi(x^*)n} K^{(n)}_n(u, v) = \lim_{n\to\infty} \frac{\sin\pi(\xi - \eta)}{\pi(e^{u - x^*} - e^{v - x^*})\pi\psi(x^*)n},
\end{equation}
which is, by Taylor approximations of $F(u)$, $F(v)$, $e^{u - x^*}$, $e^{v - x^*}$ at $x^*$, equivalent to the sine kernel limit \eqref{limsin}. Hence the $\xi \neq \eta$ part of \eqref{limsin} is proved. To prove the $\xi = \eta$ case of \eqref{limsin}, we note that functions on both sides of \eqref{eq:practical_limit_Sine} are analytic in $\xi$ and $\eta$. With $\xi$ fixed, for all $\eta$ on a small circle centred at $\xi$, the limit identity \eqref{eq:limit_id_for_sine} holds uniformly, so it extends to $\eta = \xi$. By this argument we complete the proof of \eqref{limsin}.

\subsection{Proof of \eqref{eq:practical_limit_Sine}} \label{sec:practical_limit_Sine}

We can expand $e^y\widetilde{q}^{(n)}_j(y)$ as a linear combination of $\widetilde q^{(n)}_0, \widetilde q^{(n)}_1, \ldots$,
\begin{equation}\label{exp q}
  \exp(y) \widetilde{q}^{(n)}_j(y) = \sum^{j + 1}_{k = 0} a_{j, k} \widetilde{q}^{(n)}_k(y), \quad a_{j, k} = \int \widetilde{p}^{(n)}_k(x) \widetilde{q}^{(n)}_j(x) e^x dx.
\end{equation}
On the other hand, we have
\begin{equation}\label{exp p}
  \exp_{\lfloor \delta n \rfloor}(x) \widetilde{p}^{(n)}_j(x) = \sum^{j + \lfloor \delta n \rfloor}_{k = 0} b_{j, k} \widetilde{p}^{(n)}_k(x), \quad b_{j, k} = \int \widetilde{p}^{(n)}_j(x) \widetilde{q}^{(n)}_k(x) \exp_{\lfloor \delta n \rfloor}(x) dx.
\end{equation}
Since the left hand side is a polynomial of degree $j+\lfloor \delta n\rfloor$, we see that $b_{j, k} = 0$ if $k > j + \lfloor \delta n \rfloor$; similarly  $a_{j,k}=0$ for $k>j+1$.

To compute the right-hand side of \eqref{eq:alternative_kernel_product}, we write
\begin{equation} \label{eq:tri_division}
  (\exp_{\lfloor \delta n \rfloor}(u) - \exp(v)) \sum^{n - 1}_{k = 0} \widetilde{p}^{(n)}_k(u) \widetilde{q}^{(n)}_k(v) = J_1(u, v) + J_2(u, v) - a_{n - 1, n} \widetilde{p}^{(n)}_{n - 1}(u) \widetilde{q}^{(n)}_n(v),
\end{equation}
where
\begin{equation}\label{def J1 J2}
  J_1(u, v) = \sum^{n - 1}_{j, k = 0} (b_{k, j} - a_{j, k}) \widetilde{p}^{(n)}_j(u) \widetilde{q}^{(n)}_k(v), \quad J_2(u, v) = \sum^{n - 1}_{j = n - \lfloor \delta n \rfloor} \sum^{j + \lfloor \delta n \rfloor}_{k = n} b_{j, k} \widetilde{p}^{(n)}_k(u) \widetilde{q}^{(n)}_j(v).
\end{equation}
Substituting \eqref{def J1 J2} in \eqref{eq:tri_division}, we get a double sum at the right hand side of \eqref{eq:tri_division}. This should be seen as the analogue of the Christoffel-Darboux formula \eqref{eq:CD} valid in the case where $f(\lambda)=\lambda$. In that case, the counterpart of the prefactor $(\exp_{\lfloor \delta n \rfloor}(u) - \exp(v))$ would be $(u-v)$, and only two terms survive on the right, by \eqref{eq:CD}. In our situation where $f(\lambda)=e^\lambda$, no exact cancellations take place on the right, but fortunately, it will turn out that many terms in the double sum on the right will be small as $n\to\infty$. Indeed, we will show that $J_1$ tends to $0$ as $n\to\infty$, and that the terms in $J_2$ for which $j-k$ is large and the ones for which $n-j$ or $k-n$ is large, have only a small contribution. The main contribution will come from the terms near $j=n-1$ and $k=n-1$ in the $(j,k)$-plane.

\begin{lemma}\label{lemma sine1}
Let $J_1$ be defined by (\ref{def J1 J2}), with $a_{j,k}, b_{j,k}$ defined by (\ref{exp q}) and (\ref{exp p}), and with $u,v$ as in (\ref{def uv}). We have
\begin{equation}\label{eq lemma sine 1}
\lim_{n\to\infty}e^{n(F(u)-F(v))}J_1(u,v)=0.
\end{equation}
\end{lemma}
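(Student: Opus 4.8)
The plan is to convert the double sum $J_1$ into a single integral using the integral representations of the coefficients. From \eqref{exp q} and \eqref{exp p} one reads off
\[
  b_{k,j}-a_{j,k}=\int_{\mathbb R}\widetilde p^{(n)}_k(x)\,\widetilde q^{(n)}_j(x)\,\bigl(\exp_{\lfloor\delta n\rfloor}(x)-e^x\bigr)\,dx,
\]
where each integral converges absolutely because of the Gaussian-type decay $e^{-nV(x)}$ built into $\widetilde p^{(n)}_k\widetilde q^{(n)}_j$ and the strong convexity of $V$. Since the sum over $0\le j,k\le n-1$ is finite I would interchange it with the integral to get
\[
  e^{n(F(u)-F(v))}J_1(u,v)=\int_{\mathbb R}\bigl(\exp_{\lfloor\delta n\rfloor}(x)-e^x\bigr)\,S_u(x)\,T_v(x)\,dx,
\]
with $S_u(x)=\sum_{j=0}^{n-1}\bigl(e^{nF(u)}\widetilde p^{(n)}_j(u)\bigr)\widetilde q^{(n)}_j(x)$ and $T_v(x)=\sum_{k=0}^{n-1}\widetilde p^{(n)}_k(x)\bigl(e^{-nF(v)}\widetilde q^{(n)}_k(v)\bigr)$; in this way the two outer exponential normalisations are absorbed into the polynomial values at the (slightly complex) points $u$ and $v$, and the integration variable $x$ is never exposed to $F$.

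Next I would bound $S_u$ and $T_v$ by Cauchy--Schwarz in the summation indices. Since $u$ and $v$ have imaginary part of order $1/n$, estimate \eqref{eq:rough_bound_of_summand} applies (taking $M=1$ and $C_2$ large enough that its hypothesis covers all $\xi,\eta$ in the fixed compact set), so $\lvert e^{nF(u)}\widetilde p^{(n)}_j(u)\rvert<e^{C_1n/3}$ and $\lvert e^{-nF(v)}\widetilde q^{(n)}_k(v)\rvert<e^{C_1n/3}$ for $0\le j,k\le n-1$, whence
\[
  \lvert S_u(x)\rvert\le n^{1/2}e^{C_1n/3}\Bigl(\sum_{j=0}^{n-1}\lvert\widetilde q^{(n)}_j(x)\rvert^2\Bigr)^{1/2},\qquad \lvert T_v(x)\rvert\le n^{1/2}e^{C_1n/3}\Bigl(\sum_{k=0}^{n-1}\lvert\widetilde p^{(n)}_k(x)\rvert^2\Bigr)^{1/2}.
\]
It then suffices to show that $n\,e^{2C_1n/3}$ times the integral of $\lvert\exp_{\lfloor\delta n\rfloor}(x)-e^x\rvert$ against $\bigl(\sum_{j<n}\lvert\widetilde q^{(n)}_j\rvert^2\bigr)^{1/2}\bigl(\sum_{k<n}\lvert\widetilde p^{(n)}_k\rvert^2\bigr)^{1/2}$ tends to $0$.

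For this I would split $\mathbb R$ at $\pm C_2$. On $[-C_2,C_2]$ the Stirling-type estimate \eqref{eq:Stirling_consequence} gives $\lvert\exp_{\lfloor\delta n\rfloor}(x)-e^x\rvert<C_5e^{-\frac12\lfloor\delta n\rfloor\log\lfloor\delta n\rfloor}$, while a further Cauchy--Schwarz in $x$ together with \eqref{eq:inner_estimate_orthogonals} bounds the remaining integral of the two square roots by $n\,e^{C_3n}$; the super-exponential Stirling factor dominates $e^{2C_1n/3}\cdot e^{C_3n}$, so this part is $o(1)$. On $\mathbb R\setminus[-C_2,C_2]$ I would use the rudimentary bound $\lvert\exp_{\lfloor\delta n\rfloor}(x)-e^x\rvert\le e^x+e^{-x}$ from \eqref{eq:rudimentary_estimate_exp_k_1}, distribute the weight as $(e^x+e^{-x})^{1/2}\cdot(e^x+e^{-x})^{1/2}$ and apply Cauchy--Schwarz together with \eqref{eq:outer_estimate_orthogonals} to bound the integral by $n\,e^{-C_1n}$; thus this part is at most $n\cdot e^{2C_1n/3}\cdot n\,e^{-C_1n}=n^2e^{-C_1n/3}\to0$. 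Adding the two pieces yields \eqref{eq lemma sine 1}.

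The only genuinely delicate point is the quantitative balance on the compact set $[-C_2,C_2]$: there the truncated $L^2$-masses of $\widetilde p^{(n)}_k$ and $\widetilde q^{(n)}_j$ are controlled only by the exponentially large bound $e^{C_3n}$ of \eqref{eq:inner_estimate_orthogonals}, which must then be beaten together with the $e^{2C_1n/3}$ coming from renormalising the polynomials at $u$ and $v$; this works precisely because $\exp_{\lfloor\delta n\rfloor}$ approximates $e^x$ on compacts with an error of order $e^{-cn\log n}$, which is why the truncation index is taken proportional to $n$. A subsidiary check is that $C_2$ can be fixed once and for all so that \eqref{eq:rough_bound_of_summand} holds for the whole compact range of $\xi$ and $\eta$.
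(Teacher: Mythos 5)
Your proof is correct and follows essentially the same route as the paper's: you use the same integral representation of $b_{k,j}-a_{j,k}$, the same split of $\mathbb R$ at $\pm C_2$, and the same input estimates \eqref{eq:Stirling_consequence}, \eqref{eq:rudimentary_estimate_exp_k_1}, \eqref{eq:outer_estimate_orthogonals}, \eqref{eq:inner_estimate_orthogonals}, and \eqref{eq:rough_bound_of_summand}, arriving at the identical exponent balance $n^2 e^{-C_1 n/3}+n^2 C_5 e^{2C_1n/3+C_3 n-\frac{\delta n}{2}\log(\delta n)}\to0$. The only difference is presentational: you interchange the finite double sum with the integral and run Cauchy--Schwarz collectively over the indices $j,k$ (and then again in $x$), whereas the paper bounds each coefficient $b_{k,j}-a_{j,k}$ term-by-term and sums the $n^2$ contributions at the end.
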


\begin{lemma} \label{lemma sine2}
  Let $J_2$ be defined by \eqref{def J1 J2}, with $a_{j,k}, b_{j,k}$ defined by \eqref{exp q} and \eqref{exp p}, and with $u,v$ as in \eqref{def uv}. For any $\epsilon>0$, there exist $M_0, n_0$ such that for $M\geq M_0, n\geq n_0$, 
  \begin{equation} \label{eq lemma sine 2}
    \left\lvert e^{n(F(u)-F(v))}\left(J_2(u,v)-\sum_{j=n-M}^{n-1}\sum_{k=n}^{j+M}a_{k,j}\widetilde p^{(n)}_k(u)\widetilde q^{(n)}_j(v)\right)\right\rvert<\epsilon.
  \end{equation}
\end{lemma}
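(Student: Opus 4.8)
The plan is to bound the quantity inside \eqref{eq lemma sine 2} by splitting it into two contributions: \textbf{(I)} the error made in replacing each coefficient $b_{j,k}$ occurring in $J_2$ (see \eqref{def J1 J2}, \eqref{exp p}) by $a_{k,j}$, while keeping the full summation range $n-\lfloor\delta n\rfloor\le j\le n-1$, $n\le k\le j+\lfloor\delta n\rfloor$ of $J_2$; and \textbf{(II)} the error made in then discarding all terms outside the box $n-M\le j\le n-1$, $n\le k\le j+M$. It then suffices to show that contribution (I) tends to $0$ as $n\to\infty$, and that contribution (II) can be made $<\epsilon/2$ by taking $M$, hence $n$, large. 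Step (I) is of the same flavour as the proof of Lemma~\ref{lemma sine1} and of \eqref{estimateK1}; the real work is step (II).

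For (I): from \eqref{exp q} and \eqref{exp p} one has $b_{j,k}-a_{k,j}=\int \widetilde p_j^{(n)}(x)\widetilde q_k^{(n)}(x)\big(\exp_{\lfloor\delta n\rfloor}(x)-\exp(x)\big)\,dx$ (the vanishing constraints for the $a$'s and $b$'s are vacuous here since $j<n\le k$). First I would split this integral over $[-C_2,C_2]$ and over $\realR\setminus[-C_2,C_2]$. On $[-C_2,C_2]$ the Stirling bound \eqref{eq:Stirling_consequence} gives $|\exp_{\lfloor\delta n\rfloor}(x)-\exp(x)|<C_5 e^{-\frac{\lfloor\delta n\rfloor}{2}\log\lfloor\delta n\rfloor}$, and combined with Cauchy--Schwarz and the $L^2$ bound \eqref{eq:inner_estimate_orthogonals} this piece is super-exponentially small; on the complement the rudimentary bound \eqref{eq:rudimentary_estimate_exp_k_1} gives $|\exp_{\lfloor\delta n\rfloor}(x)-\exp(x)|\le e^x+e^{-x}$, and \eqref{eq:outer_estimate_orthogonals} bounds this piece by $e^{-C_1 n}$. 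Hence $|b_{j,k}-a_{k,j}|$ is smaller than $e^{-Cn}$ for every fixed $C$. Multiplying by $|e^{nF(u)}\widetilde p_k^{(n)}(u)|\,|e^{-nF(v)}\widetilde q_j^{(n)}(v)|<e^{\frac{2}{3}C_1 n}$ from \eqref{eq:rough_bound_of_summand}, and summing over the $\bigO(n^2)$ pairs $(j,k)$, contribution (I) tends to $0$.

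For (II): the key observation is that every index pair removed from the double sum satisfies $k-j>M$ --- if $j<n-M$ then $k\ge n>j+M$, and if $j\ge n-M$ but $k>j+M$ this is immediate. Since $\delta$ is small, $x^*$ lies in the interior of $[a(t),b(t)]$ for all $t$ between $j/n$ and $k/n$ (Lemma~\ref{lemma endpoints}), so the bulk asymptotics \eqref{eq:bulk_p}--\eqref{eq:bulk_q} apply; together with \eqref{eq:continuity_of_F} they yield $|e^{nF(u)}\widetilde p_k^{(n)}(u)|\le C\rho_1^{\,k-n}$ and $|e^{-nF(v)}\widetilde q_j^{(n)}(v)|\le C\rho_2^{\,n-j}$ for constants $\rho_1,\rho_2$ bounded independently of $n,M$. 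The remaining ingredient is a uniform estimate showing that $a_{k,j}$ decays geometrically in $k-j$ fast enough to beat $\rho_1,\rho_2$: from $a_{k,j}=\int\widetilde p_j^{(n)}(x)\widetilde q_k^{(n)}(x)e^x\,dx$ one restricts to a neighbourhood of the support (the complement being exponentially negligible by \eqref{eq:outer_estimate_orthogonals}), inserts the oscillatory bulk and edge asymptotics of $\widetilde p_j^{(n)}$ and $\widetilde q_k^{(n)}$, and exploits the analyticity of $e^x$ and $e^{-nV}$ by deforming the contour off $\realR$ to expose the cancellation --- producing $|a_{k,j}|\le C\sigma^{\,k-j}$ with $\rho_1\sigma<1$, $\rho_2\sigma<1$. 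Granting this, $|a_{k,j}\,e^{n(F(u)-F(v))}\widetilde p_k^{(n)}(u)\widetilde q_j^{(n)}(v)|\le C(\rho_1\sigma)^{k-n}(\rho_2\sigma)^{n-j}$, and summing over $\{(k-n)+(n-j)>M\}$ gives a bound $\le C'\theta^M$ with $\theta=\max(\rho_1\sigma,\rho_2\sigma)<1$, which is $<\epsilon/2$ for $M$ large.

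The main obstacle is exactly this last geometric decay used in step (II): because a biorthogonal ensemble has no Christoffel--Darboux identity, the double sum does not collapse exactly, and one must instead verify --- through the oscillatory structure of the polynomial asymptotics and the contour deformation of the integral defining $a_{k,j}$ --- that only a bounded band of indices around $(n,n)$ survives. Step (I), by contrast, only uses the crude bounds already assembled in Section~\ref{section: eq}.
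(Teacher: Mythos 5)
Your overall splitting (first replace $b_{j,k}$ by $a_{k,j}$ everywhere, then truncate to a box of size $M$) is a natural variant of what the paper does, and step (I) is essentially the same calculation as in Lemma~\ref{lemma sine1}: it is correct and you cite the right ingredients (\eqref{eq:rudimentary_estimate_exp_k_1}, \eqref{eq:Stirling_consequence}, \eqref{eq:outer_estimate_orthogonals}, \eqref{eq:inner_estimate_orthogonals}, \eqref{eq:rough_bound_of_summand}). The paper reverses the order, truncating $J_2$ first and comparing $b$ with $a$ only on the small box; that order change by itself is harmless.

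The gap is in step (II). You assert, but do not establish, a uniform bound $\lvert a_{k,j}\rvert\le C\sigma^{k-j}$ together with the numerical inequalities $\rho_1\sigma<1$ and $\rho_2\sigma<1$. First, the factors $e^{nF(u)}\widetilde p^{(n)}_k(u)$ and $e^{-nF(v)}\widetilde q^{(n)}_j(v)$ do not decay in $k-n$, $n-j$: the bulk asymptotics in Lemma~\ref{lem:asy_collected}\ref{enu:lem:asy:5} only say that $e^{nF_{k/n}(u)}\widetilde p^{(n)}_k(u)$ and $e^{-nF_{j/n}(v)}\widetilde q^{(n)}_j(v)$ are bounded, and the passage from $F_{k/n}, F_{j/n}$ back to $F=F_1$ via \eqref{eq:continuity_of_F} produces a \emph{growing} factor $e^{C_0(k-j)}$; so $\rho_1,\rho_2>1$ and the competition with $\sigma$ is genuine, not automatic. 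Second, a bare geometric decay of $a_{k,j}$ with an uncontrolled $\sigma$ cannot settle this competition, and the sketch via ``oscillatory asymptotics and contour deformation off $\realR$'' is both vague and would need to be uniform over the whole range $k-j\le \lfloor\delta n\rfloor$ (Lemma~\ref{lemma sine3} only gives asymptotics for $k-j$ fixed). The paper avoids all of this by a different, sharper mechanism: biorthogonality gives $\int_\realR \exp_m(x)\widetilde p^{(n)}_j(x)\widetilde q^{(n)}_k(x)\,dx=0$ for $m<k-j$, hence
\begin{equation*}
  b_{j,k}=\int_{\realR}\bigl[\exp_{\lfloor\delta n\rfloor}(x)-\exp_{k-j-1}(x)\bigr]\widetilde p^{(n)}_j(x)\widetilde q^{(n)}_k(x)\,dx,
\end{equation*}
and the Stirling estimate \eqref{eq:Stirling_consequence} applied to this truncated-exponential tail yields \emph{factorial} decay $\sim e^{-\frac{k-j-1}{2}\log(k-j-1)}$ uniformly in $j,k$. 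That super-exponential decay dominates $e^{2C_0(k-j)}$ for any $C_0$, so no numerical inequality between constants needs to be checked. If you want to keep your order of operations, you should prove an analogous super-exponential bound for $a_{k,j}$ (which, by the same biorthogonality applied to $\exp$ rather than $\exp_{\lfloor\delta n\rfloor}$, does hold: $a_{k,j}=\int[\exp(x)-\exp_{k-j-1}(x)]\widetilde p^{(n)}_j\widetilde q^{(n)}_k\,dx$); replacing ``geometric decay beating $\rho_1,\rho_2$'' by this factorial bound would close the gap.
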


\begin{lemma} \label{lemma sine3}
  Let $a_{n + j,n - k}$ be defined by \eqref{exp q}. As $n\to\infty$, we have
  \begin{equation}\label{estimate a}
    a_{n + j, n - k}=
    \begin{cases}
      \alpha_{j + k} + \bigO(n^{-1}), & j + k \geq -1, \\
      0, & j + k \leq -2,
    \end{cases},
    \quad \text{where} \quad
    \alpha_l = 
    \begin{cases}
      \left( \frac{c_1}{(1 + l)!} + \frac{1}{l!} \right) e^{\frac{1}{2} c_1 + c_0}, & l \geq 0, \\
      c_1 e^{\frac{1}{2} c_1 + c_0}, & l = -1.
    \end{cases}
  \end{equation}
  Here $c_0=c_0(1)$, $c_1=c_1(1)$ are defined by \eqref{eq:first_def_of_c_0} and \eqref{eq:first_def_of_c_1}.
\end{lemma}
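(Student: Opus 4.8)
The plan is to compute the coefficients $a_{n+j,n-k} = \int \widetilde p^{(n)}_{n-k}(x)\widetilde q^{(n)}_{n+j}(x)e^x\,dx$ by inserting the known asymptotics of the biorthogonal polynomials. First I would translate the integral defining $a_{n+j,n-k}$ into a contour integral. By the biorthogonality relation \eqref{eq:biorth_poly}, $a_{n+j,n-k}$ can be read off as a coefficient in the expansion of $e^x\widetilde q^{(n)}_{n+j}(x)$ in the $\widetilde q$-basis, but it is cleaner to note that $a_{n+j,n-k}$ equals (up to the norming constant) a residue-type integral of $q_{n+j}^{(n)}(e^x)$ against the Cauchy transform of $p_{n-k}^{(n)}$, or equivalently one can write it as a contour integral of the form $\frac{1}{2\pi i}\oint Cq^{(n)}_{n+j}(z)\,p_{n-k}^{(n)}(z)\,(\text{something})\,dz$ over a large contour, using the relation between $a_{j,k}$, the multiplication-by-$e^x$ operator, and the Cauchy transform $Cq^{(n)}_j$ introduced in \eqref{eq:Cauchy_trans}. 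The vanishing case $j+k\le -2$ (i.e.\ $n+j < n-k$) is then immediate from degree considerations: $e^x\widetilde q^{(n)}_{n+j}$ has "degree" $n+j+1$ in the $q$-hierarchy, so it has no component along $\widetilde q^{(n)}_{n-k}$ when $n-k > n+j+1$.

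For the nontrivial case $j+k\ge -1$, I would substitute the outer asymptotics \eqref{eq:asy_of_p_n+k} for $p_{n-k}^{(n)}(z)$ and \eqref{eq:asy_of_q_n-j} for $Cq_{n+j}^{(n)}(z)$, together with \eqref{as hnk} for $h^{(n)}_{n+j}$, into the contour integral, and evaluate the leading term by residues. The key point is that $p_{n-k}^{(n)}(z)e^{n\gfn(z)}$ and $Cq^{(n)}_{n+j}(z)e^{-n\gfn(z)+n\ell}$ each carry a factor $e^{\pm n\gfn(z)}$; these cancel the bulk of the exponential growth, leaving an integrand that, after the change of variables $z = \Jlike_1(s)$ (so $s=\Jinv_1(z)$), becomes a rational function of $s$ times $e^{(j+k)\gfn(z)}$-type factors (the difference of degrees $n-k$ vs $n+j$ produces the net $e^{x}$ weight and the powers of $c_1$, $(s\pm\frac12)$). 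Concretely, in the $s$-plane the leading contribution comes from the residue at the point corresponding to $z=\infty$ (i.e.\ $s=\infty$), and the explicit rational integrand $c_1^{-k}\frac{(s+\frac12)(s-\frac12)^{-k}}{\sqrt{s^2-\frac14-1/c_1}}\cdot\frac{i}{\sqrt{c_1}}\frac{e^{-(\text{const})}(s-\frac12)^{?}}{\sqrt{s^2-\frac14-1/c_1}}$ times the factor $e^x = e^{\Jlike_1(s)}$ has to be expanded; since $e^{\Jlike_1(s)} = e^{c_1 s + c_0}\cdot\frac{s-1/2}{s+1/2}$, the integral picks up a finite sum of residues whose value is exactly $\alpha_{j+k}$. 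The two pieces $\frac{c_1}{(1+l)!}$ and $\frac{1}{l!}$ in $\alpha_l$ should emerge from, respectively, the $c_1 s$ part and the logarithmic part of $\Jlike_1$, i.e.\ from the two terms in $e^{\Jlike_1(s)}=e^{c_1 s+c_0}\frac{s-1/2}{s+1/2}$ when one expands $\frac{s-1/2}{s+1/2}=1-\frac{1}{s+1/2}$. The $l=-1$ case is special because one power cancels and only the $c_1$-term survives.

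The main obstacle I expect is bookkeeping: getting the powers of $c_1$, the shift constants $e^{\frac12 c_1+c_0}$, the signs/$i$'s, and the branch of $\sqrt{s^2-\frac14-1/c_1}$ all to line up so that the residue computation yields precisely the stated $\alpha_l$, and confirming that the error terms $\bigO(n^{-1})$ from \eqref{eq:asy_of_p_n+k}, \eqref{eq:asy_of_q_n-j}, \eqref{as hnk} integrate to $\bigO(n^{-1})$ uniformly — this requires that the contour can be taken in a fixed compact region of $\compC\setminus[a(1),b(1)]$ where those asymptotics are uniform, which is fine because after the $e^{\pm n\gfn}$ cancellation there is no exponential factor forcing the contour to infinity. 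A secondary technical point is justifying the passage from the real integral $\int\widetilde p^{(n)}_{n-k}\widetilde q^{(n)}_{n+j}e^x\,dx$ to the contour integral; here the tail estimate \eqref{eq:outer_estimate_orthogonals} guarantees the real integral converges and matches the contour-integral representation up to exponentially small error, so the reduction is clean.
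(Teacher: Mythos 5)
Your proposal follows the paper's proof closely: both rewrite $a_{n+j,n-k}$ via the Cauchy transform as a contour integral $-\frac{1}{h_{n+j}^{(n)}}\oint_\Gamma p^{(n)}_{n-k}(z)\,e^z\,Cq^{(n)}_{n+j}(z)\,dz$ around $[a,b]$, substitute the outer asymptotics \eqref{eq:asy_of_p_n+k}--\eqref{as hnk}, pull back via $s=\Jinv_1(z)$ so the integrand becomes (up to constants) $(s+\tfrac12)(s-\tfrac12)^{-j-k-2}e^{c_1 s}$, and evaluate by residues, with the $j+k\le -2$ case vanishing by degree count and the $\bigO(n^{-1})$ error coming directly from the uniform outer asymptotics on a fixed compact contour. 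One small correction to your heuristic: $e^{\Jlike(s)}=e^{c_1 s+c_0}\cdot\frac{s+1/2}{s-1/2}$ (note the orientation of the fraction), the relevant residue sits at the finite pole $s=\tfrac12$ rather than at $s=\infty$, and the two terms of $\alpha_l$ arise from writing $s+\tfrac12=(s-\tfrac12)+1$ in the residue expansion.
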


\begin{cor} \label{lemma sine4}
  Let $a_{k,j}$ be defined by \eqref{exp q}. For any $\epsilon>0$, there exist $M_0, n_0$ such that for $M\geq M_0, n\geq n_0$, we have
  \begin{equation}\label{eq lemma sine 4}
    \left\lvert e^{n(F(u)-F(v))}\left(\sum_{j=n-M}^{n-1}\sum_{k=n}^{j+M}a_{k,j}\widetilde p^{(n)}_k(u)\widetilde q^{(n)}_j(v) -a_{n-1,n}\widetilde p^{(n)}_{n-1}(u)\widetilde q^{(n)}_n(v)\right)-\frac{e^{x^*}}{\pi}\sin\pi(\xi-\eta)\right\rvert <\epsilon.
  \end{equation}
\end{cor}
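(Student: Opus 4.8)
The plan is to substitute into the double sum of \eqref{eq lemma sine 4} the asymptotics of $a_{n+p,n-q}$ from Lemma~\ref{lemma sine3} together with the bulk asymptotics of Lemma~\ref{lem:asy_collected}\ref{enu:lem:asy:5} at $t=1$, to split every summand by a product-to-sum identity, and to pass to the $M\to\infty$ limit. First I would reindex: writing $k=n+p$ ($p\ge 0$) and $j=n-q$ ($q\ge 1$), the double sum runs over $\{(p,q):p\ge 0,\ q\ge 1,\ p+q\le M\}$, and the subtracted term $a_{n-1,n}\widetilde p^{(n)}_{n-1}(u)\widetilde q^{(n)}_n(v)$ is exactly the ``missing index'' $(p,q)=(-1,0)$. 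By Lemma~\ref{lemma sine3}, $a_{n+p,n-q}=\alpha_{p+q}+\bigO(n^{-1})$ uniformly over this finite index set.

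Applying \eqref{eq:bulk_p}--\eqref{eq:bulk_q} at $t=1$, with $u$ resp.\ $v$ from \eqref{def uv} in the role of $x$ there so that the local variable equals $\xi$ resp.\ $\eta$, gives
\begin{align*}
  e^{nF(u)}\widetilde p^{(n)}_{n+p}(u) &= r_{1,p}(x^*)\bigl[\cos\bigl(\Theta_n+\theta_{1,p}(x^*)-\xi\bigr)+\bigO(n^{-1})\bigr],\\
  e^{-nF(v)}\widetilde q^{(n)}_{n-q}(v) &= \frac{\hat r_{1,-q}(x^*)}{\sqrt{2\pi}\,c_1^{-q+1/2}e^{-q(c_1/2+c_0)}}\bigl[\cos\bigl(\Theta_n+\hat\theta_{1,-q}(x^*)-\eta\bigr)+\bigO(n^{-1})\bigr],
\end{align*}
with the common rapidly oscillating phase $\Theta_n:=n\pi\int_{x^*}^{b}d\mu$. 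Multiplying the three factors and using $\cos A\cos B=\tfrac12\cos(A-B)+\tfrac12\cos(A+B)$ splits each summand, and the $(-1,0)$ term, into a \emph{non-oscillating} part depending only on $\theta_{1,p}(x^*)-\hat\theta_{1,-q}(x^*)$ and $\xi-\eta$, and an \emph{oscillating} part carrying the factor $\cos\bigl(2\Theta_n+\theta_{1,p}(x^*)+\hat\theta_{1,-q}(x^*)-\xi-\eta\bigr)$, at the cost of a total substitution error $\bigO(M^2/n)$ (the cosines stay bounded even for complex $\xi,\eta$ since $\Theta_n$ is real).

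The key point is that by \eqref{def Gk} the amplitude-times-phase factors are geometric in the offset: with $s^*=\Jinv_+(x^*)$, $\bar s^*=\Jinv_-(x^*)=\overline{s^*}$ and $\rho:=c_1(s^*-\tfrac12)$,
\[
  r_{1,k}(x^*)e^{i\theta_{1,k}(x^*)}=2G_{1,0}(s^*)\,\rho^{k}=:C\rho^{k},\qquad
  \frac{\hat r_{1,-q}(x^*)e^{i\hat\theta_{1,-q}(x^*)}}{\sqrt{2\pi}\,c_1^{-q+1/2}e^{-q(c_1/2+c_0)}}=:\widehat C\,\overline{\rho}^{\,q}.
\]
Hence each non-oscillating summand with $p+q=l$ equals $\tfrac12\alpha_l\,\Re\bigl(C\overline{\widehat C}\,\rho^{l}e^{-i(\xi-\eta)}\bigr)$, independent of $p,q$ individually, so, since there are $l$ pairs with $p+q=l$, the non-oscillating part of the double sum minus the $(-1,0)$ contribution equals $\tfrac12\Re\bigl(C\overline{\widehat C}\,e^{-i(\xi-\eta)}\bigl[\,\sum_{l=1}^{M}l\,\alpha_l\rho^{l}-\alpha_{-1}\rho^{-1}\,\bigr]\bigr)$. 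Because $\alpha_l=\bigO(1/l!)$, the generating function $\Phi(z):=\sum_{l\ge1}\alpha_l z^{l}$ is entire with an explicit closed form, $\sum_{l\ge1}l\alpha_l\rho^{l}=\rho\Phi'(\rho)$, and a short computation gives $\rho\Phi'(\rho)-\alpha_{-1}\rho^{-1}=\rho^{-1}e^{c_1/2+c_0}e^{\rho}\bigl(\rho^2+c_1\rho-c_1\bigr)$ with $\rho^2+c_1\rho-c_1=c_1^2\bigl((s^*)^2-\tfrac14-\tfrac1{c_1}\bigr)$; this last factor cancels the square roots $\sqrt{s^2-\tfrac14-\tfrac1{c_1}}$ present in $C$ and $\widehat C$, and then $\Jlike(s^*)=x^*\in\realR$ (equivalently $e^{c_1 s^*+c_0}=e^{x^*}\tfrac{s^*-1/2}{s^*+1/2}$) collapses the expression to $\tfrac{e^{x^*}}{\pi}\sin\pi(\xi-\eta)$. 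By the same closed-form summation the oscillating part has total amplitude $\tfrac12 C\widehat C\bigl(\sum_{l\ge1}\alpha_l\,\overline{\rho}\,\tfrac{\rho^{l}-\overline{\rho}^{l}}{\rho-\overline{\rho}}-\alpha_{-1}\rho^{-1}\bigr)$, which \emph{vanishes} in the limit because the bracket reduces to $\overline{\rho}(c_1+\rho)e^{\rho}=\rho(c_1+\overline{\rho})e^{\overline{\rho}}$, again equivalent to $\Im\Jlike(s^*)=0$; at finite $M$ it is dominated by the series tail $\bigO(|\rho|^{M}/(M-1)!)$, super-exponentially small in $M$.

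Given $\epsilon>0$, one first fixes $M=M_0$ large enough that both the tail of $\sum l\alpha_l\rho^{l}$ beyond $M$ and the oscillating amplitude at level $M$ each contribute less than $\epsilon/2$, then $n_0$ large enough that $\bigO(M_0^2/n)<\epsilon/2$; all bounds are uniform for $\xi,\eta$ in a compact subset of $\compC$. The main obstacle is the last step of the third paragraph: one must carry the precise constants through $C$, $\widehat C$, the $\alpha_l$ and $\Phi$, fix the branch of $\sqrt{s^2-\tfrac14-\tfrac1{c_1}}$ consistently for $G_{1,0}$ (cut along $\gamma_1$) and $\hat G_{1,0}$ (cut along $\gamma_2$) so that the two square roots cancel, and invoke $\Jlike(s^*)=x^*$ at exactly the right moment; the cancellation of the oscillating amplitude is the imaginary counterpart of the same identity, while the reindexing and error bookkeeping are routine.
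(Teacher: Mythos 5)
Your proof is correct and takes essentially the same route as the paper: reindex the double sum around $(n,n)$ with the subtracted term as the missing $(p,q)=(-1,0)$ entry, substitute the bulk asymptotics \eqref{eq:bulk_p}--\eqref{eq:bulk_q} at $t=1$ together with $a_{n+p,n-q}=\alpha_{p+q}+\bigO(n^{-1})$ from Lemma~\ref{lemma sine3}, split each summand into a non-oscillating part and a part carrying $e^{\pm 2i\Theta_n}$, and evaluate both by closed-form summation that simplifies via $\Jlike(\Jinv_\pm(x^*))=x^*$. The only packaging difference is that the paper keeps the $(k,l)$ double sum as the auxiliary quantities $I_5$, $I_6$ and evaluates them with the two-variable exponential-series identities, whereas you first collapse to a single sum over $l=p+q$ (using the geometric structure $r_{1,p}e^{i\theta_{1,p}}=C\rho^p$, the multiplicity $l$ for the non-oscillating diagonals, and a finite geometric sum for the oscillating ones) and then work with the single-variable generating function $\Phi$ and $\rho\Phi'(\rho)$; these are the same computation reorganized.
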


Summing up \eqref{eq lemma sine 1}, \eqref{eq lemma sine 2}, and \eqref{eq lemma sine 4}, we obtain that \eqref{sec:practical_limit_Sine} holds. Hence it suffices to prove Lemma \ref{lemma sine1}, Lemma \ref{lemma sine2}, and Corollary \ref{lemma sine4}, as well as Lemma \ref{lemma sine3} on which Corollary \ref{lemma sine4} relies. We note that in the proofs below, $C_2$ is a large enough positive constant, and other constant terms are defined where they are used.

\begin{proof}[Proof of Lemma \ref{lemma sine1}]
  Using \eqref{exp q}, \eqref{exp p}, and the orthogonality of $\widetilde p^{(n)}_j$ and $\widetilde q^{(n)}_j$, we obtain for any $0 \leq j, k \leq n$,
  \begin{equation}
    \lvert b_{k, j} - a_{j, k} \rvert = \left\lvert \int_{\realR} (\exp(x) - \exp_{\lfloor \delta n \rfloor}(x)) \widetilde{p}^{(n)}_k(x) \widetilde{q}^{(n)}_j(x) dx\right\rvert.
  \end{equation}
  Using \eqref{eq:rudimentary_estimate_exp_k_1}, the Cauchy-Schwarz inequality, and \eqref{eq:outer_estimate_orthogonals}, we obtain
  \begin{equation} \label{eq:I_1_new}
    \begin{split}
      I_1 := {}& \left\lvert \int_{\realR \setminus [-C_2, C_2]} (\exp(x) - \exp_{\lfloor \delta n \rfloor}(x)) \widetilde{p}^{(n)}_k(x) \widetilde{q}^{(n)}_j(x) dx \right\rvert \\
      \leq {}& \left\lvert \int_{\realR \setminus [-C_2, C_2]} (e^x + e^{-x}) \widetilde{p}^{(n)}_k(x) \widetilde{q}^{(n)}_j(x) dx \right\rvert \\
      \leq {}& e^{-C_1n},
    \end{split}
  \end{equation}
  and by \eqref{eq:Stirling_consequence}, the Cauchy-Schwarz inequality, and \eqref{eq:inner_estimate_orthogonals},
  \begin{equation}
    \begin{split}
      I_2 := {}& \left\lvert \int_{[-C_2, C_2]} (\exp(x) - \exp_{\lfloor \delta n \rfloor}(x)) \widetilde{p}^{(n)}_k(x) \widetilde{q}^{(n)}_j(x) dx \right\rvert \\
      \leq {}& C_5 e^{-\frac{\lfloor \delta n \rfloor}{2} \log(\lfloor \delta n \rfloor)} \left\lvert \int_{[-C_2, C_2]} \widetilde{p}^{(n)}_k(x) \widetilde{q}^{(n)}_j(x) dx \right\rvert \\
      \leq {}& C_5 e^{-\frac{\delta n}{2} \log(\delta n) + C_3 n}.
    \end{split}
  \end{equation}
  Using the estimate \eqref{eq:rough_bound_of_summand} for $e^{n(F(u) - F(v))} \widetilde{p}^{(n)}_j(u) \widetilde{q}^{(n)}_k(v)$, we have that for each pair $j, k$,
  \begin{equation} \label{eq:useful_in_J_1_J_2}
    \begin{split}
      \left\lvert e^{n(F(u) - F(v))} (b_{k, j} - a_{j, k}) \widetilde{p}^{(n)}_j(u) \widetilde{q}^{(n)}_k(v) \right\rvert = {}& (I_1+I_2) \left\lvert e^{nF(u)} \widetilde{p}^{(n)}_j(u) e^{-nF(v))} \widetilde{q}^{(n)}_k(v) \right\rvert \\
      < {}& \left(e^{-C_1n} + C_5 e^{-\frac{\delta n}{2} \log(\delta n) + C_3 n}\right) e^{\frac{2C_1}{3} n},
    \end{split}
  \end{equation}
  and then finish the proof by
  \begin{equation}
  \lvert e^{n(F(u) - F(v))} J_1(u, v) \rvert < n^2(e^{-C_1 n} + C_5 e^{-\frac{\delta n}{2} \log(\delta n) + C_3 n}) e^{\frac{2C_1}{3} n} = o(1),\quad \text{as } n\to\infty.
  \end{equation}
\end{proof}

\begin{proof}[Proof of Lemma \ref{lemma sine2}]
  When we consider $J_2$ defined in \eqref{def J1 J2}, the indices $j, k$ satisfy $n - \lfloor \delta n \rfloor \leq j \leq n - 1$ and $n \leq k \leq j + \lfloor \delta n \rfloor$. If $m < k - j$, then by the biorthogonality,
  \begin{equation}
    \int_{\realR} \exp_m(x) \widetilde{p}^{(n)}_j(x) \widetilde{q}^{(n)}_k(x) dx = 0.
  \end{equation}
  Hence we have 
  \begin{equation} \label{eq:B_jk_in_J_2}
    \lvert b_{j, k} \rvert = \left\lvert \int_{\realR} [\exp_{\lfloor \delta n \rfloor}(x) - \exp_{k - j - 1}(x)] \widetilde{p}^{(n)}_j(x) \widetilde{q}^{(n)}_k(x) dx \right\rvert \leq I_3 + I_4,
  \end{equation}
  where
  \begin{align}
    I_3 := {}& \left\lvert \int_{\realR \setminus [-C_2, C_2]} [\exp_{\lfloor \delta n \rfloor}(x) - \exp_{k - j - 1}(x)] \widetilde{p}^{(n)}_j(x) \widetilde{q}^{(n)}_k(x) dx \right\rvert, \\
    I_4 := {}& \left\lvert \int_{[-C_2, C_2]} [\exp_{\lfloor \delta n \rfloor}(x) - \exp_{k - j - 1}(x)] \widetilde{p}^{(n)}_k(x) \widetilde{q}^{(n)}_j(x) dx \right\rvert.
  \end{align}
  Similar to \eqref{eq:I_1_new}, we have $I_3  < e^{-C_1n}$. On the other hand,
  \begin{multline} \label{eq:est_I_4}
    I_4 = \left\lvert \int_{[-C_2, C_2]} [\exp_{\lfloor \delta n \rfloor}(x) - \exp_{k - j - 1}(x)] e^{-n(F_{k/n}(x) - F_{j/n}(x))} \right. \\
    \left. \vphantom{\int_{[-C_2, C_2]}}
      \times e^{nF_{k/n}(x)} \widetilde{p}^{(n)}_k(x) e^{-nF_{j/n}(x)} \widetilde{q}^{(n)}_j(x) dx \right\rvert.
  \end{multline}
  Using the estimate \eqref{eq:Stirling_consequence} to estimate $\exp_{\lfloor \delta n \rfloor}(x) - \exp_{k - j - 1}(x)$, and then using \eqref{eq:continuity_of_F} to estimate $F_{k/n}(x) - F_{j/n}(x)$ and \eqref{eq:estimate_of_essential_part} to estimate $e^{nF_{k/n}(x)} \widetilde{p}_k(x) e^{-nF_{j/n}(x)} \widetilde{q}_j(x)$, we have
  \begin{equation}
    I_4 \leq C_4C_5 e^{-\frac{k - j - 1}{2} \log(k - j - 1) + C_0 (k-j)}.
  \end{equation}
  It follows that 
  \begin{equation}\label{estimate bjk}
    |b_{j,k}|\leq e^{-C_1n}+C_4C_5e^{-\frac{k - j - 1}{2} \log(k - j - 1) + C_0 (k-j)}.
  \end{equation}
  Now we estimate $e^{n(F(u) - F(v))} \widetilde{p}^{(n)}_k(u) \widetilde{q}^{(n)}_j(v)$. Since in the setting of Lemma \ref{lemma sine2}, $|j-n|, |k-n|<\lfloor \delta n\rfloor$ and $\delta$ small, we have that $x^*$ lies in $(a(j/n), b(j/n))$ and $(a(k/n), b(k/n))$, where $[a(t), b(t)]$ is the support of the equilibrium measure $\mu_t$, and it follows from the bulk asymptotics in Lemma \ref{lem:asy_collected}\ref{enu:lem:asy:5} that $e^{nF_{k/n}(u)}\widetilde p^{(n)}_k(u)$ and $e^{-nF_{j/n}(v)}\widetilde q^{(n)}_j(v)$ are bounded in $n$. Combining this with \eqref{eq:continuity_of_F}, there is a constant $C'$ such that
  \begin{equation} \label{eq:alt_est_pq}
    \begin{split}
      e^{n(F(u) - F(v))} \widetilde{p}^{(n)}_k(u) \widetilde{q}^{(n)}_j(v) = {}& e^{n(F_1(u) - F_{k/n}(u))} e^{-n(F_1(v) - F_{j/n}(v))} e^{nF_{k/n}(u)} \widetilde{p}^{(n)}_k(u) e^{-nF_{j/n}(v)} \widetilde{q}^{(n)}_j(v) \\
      \leq {}& e^{C_0(k - n)} e^{C_0(n - j)} C' \\
      = {}& C' e^{C_0(k-j)}.
    \end{split}
  \end{equation}
  Combining \eqref{estimate bjk} with the estimate \eqref{eq:alt_est_pq} for $e^{n(F(u) - F(v))} \widetilde{p}^{(n)}_k(u) \widetilde{q}^{(n)}_j(v)$, we have that if $n$ is large enough, then the value of $\lvert b_{j, k} e^{n(F(u) - F(v))} \widetilde{p}^{(n)}_k(u) \widetilde{q}^{(n)}_j(v) \rvert$ 
  \begin{itemize}
  \item 
    decays exponentially fast as $k - j$ increases, as long as $k - j \leq \sqrt{n}$, and
  \item
    is bounded by $e^{-\sqrt{n}}$ for $k - j \in (\sqrt{n}, \delta n]$, given that $\delta$ is small enough.
  \end{itemize}
  Thus, in the double sum defining $J_2$ in \eqref{def J1 J2}, the terms for $k-j$ large will give a small contribution in $e^{n(F(u) - F(v))} \widetilde{p}^{(n)}_k(u) \widetilde{q}^{(n)}_j(v)$: for any $\epsilon_1 > 0$, we have that
  \begin{equation}\label{eq:J2}
    \left\lvert e^{n(F(u) - F(v))} \left( J_2(u, v) - \sum^{n - 1}_{j = n - M} \sum^{j + M}_{k = n} b_{j, k} \widetilde{p}^{(n)}_k(u) \widetilde{q}^{(n)}_j(v) \right) \right\rvert < \epsilon_1,
  \end{equation}
  if $M$ and $n$ are large enough.
  
  We note that inequality \eqref{eq:useful_in_J_1_J_2} also holds when $k, j$ are around $n$. Hence for any finite $M$, there exists $\epsilon > 0$, such that
\begin{equation}
  \left\lvert e^{n(F(u) - F(v))} \sum^{n - 1}_{j = n - M} \sum^{j + M}_{k = n} (b_{j, k} - a_{k, j}) \widetilde{p}^{(n)}_k(u) \widetilde{q}^{(n)}_j(v) \right\rvert = \bigO(e^{-\epsilon n}), \quad n\to\infty.
\end{equation}
This, together with \eqref{eq:J2}, completes the proof.
\end{proof}

\begin{proof}[Proof of Lemma \ref{lemma sine3}]
  When $j + k \leq -2$ or $(n - k + 1) < (n + j)$, the result follows from the biorthogonality. Below we prove the other cases. Let $C_2>0$ be a sufficiently large constant such that the support of the equilibrium measure $\mu$ is contained in the interval $(-C_2, C_2)$. Using the exponential decay of the polynomials $\widetilde p^{(n)}_j$ and $\widetilde q_j^{(n)}$, see (\ref{eq:outer_estimate_orthogonals}), it is straightforward to approximate $a_{n + j, n - k}$ for large $n$ as follows,
  \begin{equation} \label{eq:a_n-1_n_eval}
    \begin{split}
      a_{n + j, n - k} = {}& \int^{\infty}_{-\infty} \widetilde p^{(n)}_{n - k}(x) \exp(x) \widetilde q^{(n)}_{n + j}(x) dx \\
      = {}& \frac{1}{h_{n + j}} \int_{[-C_2, C_2]} p^{(n)}_{n - k}(x) \exp(x) q^{(n)}_{n + j}(e^x) e^{-nV(x)} dx + \bigO(e^{-C_1 n}) \\
      = {}& -\frac{1}{h_{n + j}} \oint_{\Gamma} p^{(n)}_{n - k}(z) \exp(z) Cq^{(n)}_{n + j}(z) dz + \bigO(e^{-C_1n}),
    \end{split}
  \end{equation}
  where the Cauchy transform $Cq^{(n)}_{n + j}(z)$ is defined in \eqref{eq:Cauchy_trans}, and $\Gamma$ is a counterclockwise oriented contour surrounding the support of the equilibrium measure $[a, b]$ and intersecting with the real line at the points $-C_2, C_2$. See Figure \ref{fig:Gamma}.
  \begin{figure}[htb]
    \centering
    \includegraphics{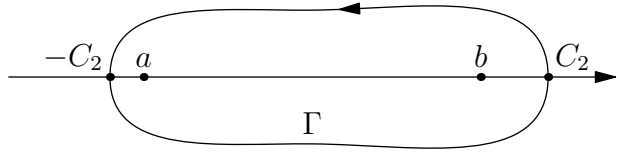}
    \caption{Contour $\Gamma$.}
    \label{fig:Gamma}
  \end{figure}
  
  Below we compute the coefficients $a_{n + j, n - k}$. We need to use the mapping $\Jlike_{c_1(1), c_0(1)}(s)$ defined in \eqref{sab}, and abbreviate it as $\Jlike(s)$ here. We also need $\Jinv_1$, an inverse functions of $\Jlike$, as defined in \cite[(1.26)]{Claeys-Wang11}. Recall \eqref{eq:asy_of_p_n+k}--\eqref{as hnk} describing the outer asymptotics in Lemma \ref{lem:asy_collected}\ref{enu:lem:asy:2}. Substituting those in \eqref{eq:a_n-1_n_eval} and writing $s=\Jinv_1(z)$, we obtain as $n\to\infty$,
  \begin{equation}
    a_{n + j, n - k} = \frac{c^{-j - k - 1}_1}{2\pi i} \oint_{\Gamma} \frac{(s + \frac{1}{2})(s - \frac{1}{2})^{-j - k}}{s^2 - \frac{1}{4} - \frac{1}{c_1}} e^z (1 + \bigO(n^{-1})) dz + \bigO(e^{-C_1n}). \end{equation}    
 Using $\Jlike$, we get
  \begin{equation}
    \begin{split}    
      a_{n + j, n - k} = {}& \frac{c^{-j - k - 1}_1}{2\pi i} \oint_{\Gamma'} \frac{(s + \frac{1}{2})(s - \frac{1}{2})^{-j - k}}{s^2 - \frac{1}{4} - \frac{1}{c_1}} e^{\Jlike(s)} \frac{d\Jlike(s)}{ds} (1 + \bigO(n^{-1})) ds + \bigO(e^{-C_1n}) \\
      = {}& \frac{c^{-j - k}_1 e^{c_0}}{2\pi i} \oint_{\Gamma'} (s + \frac{1}{2})(s - \frac{1}{2})^{-j - k - 2} e^{c_1 s} (1 + \bigO(n^{-1})) ds + \bigO(e^{-C_1n}),
    \end{split}
  \end{equation}
  with $\Gamma'=\Jinv_1(\Gamma)$, and \eqref{estimate a} follows from a simple residue calculus.
\end{proof}

\begin{proof}[Proof of Corollary \ref{lemma sine4}]
  We need to estimate
  \begin{equation}\label{eq lemma4 1}
    F_{n,M}:= e^{n(F(u) - F(v))} \left( \sum^{n - 1}_{j = n - M} \sum^{j + M}_{k = n} a_{k, j} \widetilde{p}^{(n)}_k(u) \widetilde{q}^{(n)}_j(v) - a_{n - 1, n} \widetilde{p}^{(n)}_{n - 1}(u) \widetilde{q}^{(n)}_n(v) \right). 
  \end{equation}
  Recall that, for $u,v$ near the point $x^*$ which lies in $(a, b)$, the interior of the support of the equilibrium measure $\mu = \mu_1$, the asymptotics of $e^{nF(u)} \widetilde{p}^{(n)}_{n + k}(u)$ and $e^{-nF(v)} \widetilde{q}^{(n)}_{n + k}(e^v)$ are given by \eqref{eq:bulk_p} and \eqref{eq:bulk_q}, with $j = n$ and $t = j/n = 1$ (here $F = F_1$). Then
  \begin{equation}
    e^{nF(u)} \widetilde{p}^{(n)}_{n + k}(u) \widetilde{q}^{(n)}_{n + j}(v) e^{-nF(v)} = A_{k, j}(u, v) + \bigO(n^{-1}),
  \end{equation}
  with ($\mu = \mu_1$, $r_k = r_{1, k}$, $\hat{r}_k = \hat{r}_{1, k}$, $\theta_k = \theta_{1, k}$ and $\hat{\theta}_k = \hat{\theta}_{1, k}$ in \eqref{eq:bulk_p} and \eqref{eq:bulk_q})
  \begin{multline}
    A_{k, j}(u, v) = {} \frac{1}{2\pi} c^{-j - \frac{1}{2}}_1 e^{-j(\frac{c_1}{2} + c_0)} r_k(x^*) \left[ \cos \left( n\pi \int^b_{x^*} d\mu(t) + \theta_k(x^*) - \xi \right) \right] \\
    \times \hat{r}_j(x^*) \left[ \cos \left( n\pi \int^b_{x^*} d\mu(t) + \hat{\theta}_j(x^*) - \eta \right) \right].
  \end{multline}
  Now we denote $G_k = G_{1, k}$ and $\hat{G}_k = \hat{G}_{t, k}$ where $G_{t, k}$ and $\hat{G}_{t, k}$ are defined in \eqref{def Gk}. Using the relations $\Jinv_-(\cdot) = \overline{\Jinv_+(\cdot)}$, $G_k(\Jinv_-(\cdot)) = \overline{G_k(\Jinv_+(\cdot))}$, this can be further computed as
  \begin{equation} \label{eq:approx_of_each_term}
    \begin{split}
      A_{k, j}(u, v) = {}&  \frac{1}{2\pi} c^{-j - \frac{1}{2}}_1 e^{-j(\frac{c_1}{2} + c_0)} 4 \Re \left( G_k(\Jinv_+(x^*)) e^{in\pi \int^b_{x^*} d\mu - i\xi} \right) \Re \left( \hat{G}_j(\Jinv_-(x^*)) e^{in\pi \int^b_{x^*} d\mu - i\eta} \right) \\
      = {}& \frac{1}{2\pi} c^{-j - \frac{1}{2}}_1 e^{-j(\frac{c_1}{2} + c_0)} \left( G_k(\Jinv_+(x^*)) e^{in\pi \int^b_{x^*} d\mu - i\xi} + G_k(\Jinv_-(x^*)) e^{-in\pi \int^b_{x*} d\mu + i\xi} \right) \\
      & \times \left( \hat{G}_j(\Jinv_+(x^*)) e^{-in\pi \int^b_{x^*} d\mu + i\eta} + \hat{G}_j(\Jinv_-(x^*)) e^{in\pi \int^b_{x^*} d\mu - i\eta} \right) \\
      = {}& \frac{1}{2\pi} c^{-j - \frac{1}{2}}_1 e^{-j(\frac{c_1}{2} + c_0)} \\
      &\times\ \left[ G_k(\Jinv_+(x^*))\hat{G}_j(\Jinv_-(x^*)) e^{2in\pi \int^b_{x^*} d\mu - i(\xi + \eta)} \right. 
       + G_k(\Jinv_-(x^*))\hat{G}_j(\Jinv_+(x^*)) e^{-2in\pi \int^b_{x^*} d\mu + i(\xi + \eta)} \\
      &\qquad   + G_k(\Jinv_+(x^*))\hat{G}_j(\Jinv_+(x^*)) e^{-i(\xi - \eta)} + \left.  G_k(\Jinv_-(x^*))\hat{G}_j(\Jinv_-(x^*)) e^{i(\xi - \eta)} \right] \\
      = {}& \frac{1}{\pi} c^{-j - \frac{1}{2}}_1 e^{-j(\frac{c_1}{2} + c_0)} \left[ \Re (G_k(\Jinv_+(x^*)) \hat{G}_j(\Jinv_-(x^*)) e^{2in\pi \int^b_{x^*} d\mu - i(\xi + \eta)}) \right. \\
      & \phantom{\frac{1}{2\pi} c^{-j - \frac{1}{2}}_1 e^{-j(\frac{c_1}{2} + c_0)}} + \left. \vphantom{e^{in\pi \left( \int^b_x d\mu + \int^b_y d\mu \right)}} \Re (G_k(\Jinv_+(x^*)) \hat{G}_j(\Jinv_+(x^*))) e^{-i(\xi - \eta)} \right].
    \end{split}
  \end{equation}
  We want to find asymptotics for $F_{n,M}$ in \eqref{eq lemma4 1} by substituting the formula for $A_{k,-l}$. We have for any fixed $M$, as $n \to \infty$,
  \begin{equation}
    \begin{split}
      F_{n,M} = {}& \left(\sum^M_{l = 1} \sum^{M - l}_{k = 0} a_{n + k, n - l} A_{k, -l}(u, v) - a_{n - 1, n} A_{-1, 0}(u, v)\right) + \bigO(n^{-1}) \\
      = {}& \left( \sum^M_{l = 1} \sum^{M - l}_{k = 0} \alpha_{k + l} \frac{1}{\pi} c^{l - \frac{1}{2}}_1 e^{l(\frac{c_1}{2} + c_0)} \left[ \Re (G_k(\Jinv_+(x^*)) \hat{G}_{-l}(\Jinv_-(x^*)) e^{2in\pi \int^b_{x^*} d\mu - i(\xi + \eta)}) \right. \right. \\
      & \phantom{\smash{\left( \sum^M_{l = 1} \sum^{M - 1}_{k = 0} \alpha_{k + l} \frac{1}{\pi} c^{l - \frac{1}{2}}_1 e^{l(\frac{c_1}{2} + c_0)} \right.}} - \left.  \Re (G_k(\Jinv_+(x^*)) \hat{G}_{-l}(\Jinv_+(x^*)) e^{-i(\xi - \eta)}) \right]\\
      &  - \alpha_{-1} \frac{1}{\pi} c^{-\frac{1}{2}}_1 \left[ \Re (G_{-1}(\Jinv_+(x^*)) \hat{G}_0(\Jinv_-(x^*)) e^{2in\pi \int^b_{x^*} d\mu - i(\xi + \eta)}) \right.  \\
      & \left. \vphantom{\sum^M_{l = 1} \sum^{M - l}_{k = 0}} \phantom{\smash{- \alpha_{-1} \frac{1}{\pi} c^{-\frac{1}{2}}_1}} \left. - \Re (G_{-1}(\Jinv_+(x^*)) \hat{G}_0(\Jinv_+(x^*)) e^{-i(\xi - \eta)}) \right] \right) + \bigO(n^{-1}).
    \end{split}
  \end{equation}
  Next, we define
  \begin{multline} \label{eq:first_tricky_id}
    I_5:=-c_1 G_{-1}(\Jinv_+(x^*))\hat{G}_0(\Jinv_+(x^*)) \\+ \sum^{\infty}_{l = 1} \sum^{\infty}_{k = 0} \left( \frac{c_1}{(1 + k + l)!} + \frac{1}{(k + l)!} \right) c^l_1 e^{l(\frac{c_1}{2} + c_0)} G_k(\Jinv_+(x^*)) \hat{G}_{-l}(\Jinv_+(x^*)),
  \end{multline}
  \begin{multline} \label{eq:second_tricky_id}
    I_6:=-c_1 G_{-1}(\Jinv_+(x^*))\hat{G}_0(\Jinv_-(x^*)) \\+ \sum^{\infty}_{l = 1} \sum^{\infty}_{k = 0} \left( \frac{c_1}{(1 + k + l)!} + \frac{1}{(k + l)!} \right) c^l_1 e^{l(\frac{c_1}{2} + c_0)} G_k(\Jinv_+(x^*)) \hat{G}_{-l}(\Jinv_-(x^*)),
  \end{multline}
  and by \eqref{estimate a}, we have the identity between absolute convergent series
  \begin{equation}
    \begin{split}
      & \frac{1}{\pi} c^{-\frac{1}{2}}_1 e^{\frac{c_1}{2} + c_0} \left( \Re[I_6 e^{2in\pi \int^b_{x^*} d\mu - i(\xi + \eta)}] - \Re[I_5 e^{-i(\xi - \eta)}] \right) \\
      = {}& \sum^{\infty}_{l = 1} \sum^{\infty}_{k = 0} \alpha_{k + l} \frac{1}{\pi} c^{l - \frac{1}{2}}_1 e^{l(\frac{c_1}{2} + c_0)} \left[ \Re (G_k(\Jinv_+(x^*)) \hat{G}_{-l}(\Jinv_-(x^*)) e^{2in\pi \int^b_{x^*} d\mu - i(\xi + \eta)}) \right. \\
      & \phantom{\smash{ \sum^{\infty}_{l = 1} \sum^{\infty}_{k = 0} \alpha_{k + l} \frac{1}{\pi} c^{l - \frac{1}{2}}_1 e^{l(\frac{c_1}{2} + c_0)}}} - \left.  \Re (G_k(\Jinv_+(x^*)) \hat{G}_{-l}(\Jinv_+(x^*)) e^{-i(\xi - \eta)}) \right]\\
      &  - \alpha_{-1} \frac{1}{\pi} c^{-\frac{1}{2}}_1 \left[ \Re (G_{-1}(\Jinv_+(x^*)) \hat{G}_0(\Jinv_-(x^*)) e^{2in\pi \int^b_{x^*} d\mu - i(\xi + \eta)}) \right.  \\
      & \vphantom{\sum^{\infty}_{l = 1} \sum^{\infty}_{k = 0}} \phantom{\smash{- \alpha_{-1} \frac{1}{\pi} c^{-\frac{1}{2}}_1}} \left. - \Re (G_{-1}(\Jinv_+(x^*)) \hat{G}_0(\Jinv_+(x^*)) e^{-i(\xi - \eta)}) \right].
    \end{split}
  \end{equation}
  If we can prove
  \begin{equation} \label{eq:estimate_in_first_y_close_to_x}
    I_5=i\sqrt{c_1} e^{-(\frac{c_1}{2} + c_0)} e^{x^*}, \quad I_6=0,
  \end{equation}
  then the corollary is proved.
  
  To finish the proof, we first consider $I_5$ in \eqref{eq:first_tricky_id}. By the definition \eqref{def Gk} of $G_k$ and $\hat{G}_{-l}$, we get
  \begin{multline} \label{eq:first_id_transf}
    I_5= \frac{i}{\sqrt{c_1}} \frac{\Jinv_+(x^*) + \frac{1}{2}}{\Jinv_+(x^*)^2 - \frac{1}{4} - \frac{1}{c_1}} \\
    \times \left[ -\frac{1}{\Jinv_+(x^*) - \frac{1}{2}} + \sum^{\infty}_{l = 1} \sum^{\infty}_{k = 0} \left( \frac{c_1}{(1 + k + l)!} + \frac{1}{(k + l)!} \right) c^{k + l}_1 (\Jinv_+(x^*) - \frac{1}{2})^{k + l} \right].
  \end{multline}
  By the identities
  \begin{equation}
    \sum^{\infty}_{l = 1} \sum^{\infty}_{k = 0} \frac{1}{(k + l)!} x^{k + l} = xe^x, \quad \sum^{\infty}_{l = 1} \sum^{\infty}_{k = 0} \frac{1}{(1 + k + l)!} x^{k + l} = (1 - \frac{1}{x})e^x + \frac{1}{x},
  \end{equation}
  we have
  \begin{multline} \label{eq:long_sum_evaluated}
    \sum^{\infty}_{l = 1} \sum^{\infty}_{k = 0} \left( \frac{c_1}{(1 + k + l)!} + \frac{1}{(k + l)!} \right) c^{k + l}_1 (\Jinv_+(x^*) - \frac{1}{2})^{k + l} = \\
    \left( c_1(\Jinv_+(x^*) + \frac{1}{2}) - \frac{1}{\Jinv_+(x^*) - \frac{1}{2}} \right) e^{c_1(\Jinv_+(x^*) - \frac{1}{2})} + \frac{1}{\Jinv_+(x^*) - \frac{1}{2}}.
  \end{multline}
  Using the identity $\Jlike(\Jinv_+(x)) = x$, we have
  \begin{equation} \label{eq:J(I(x))}
    (\Jinv_+(x^*) + \frac{1}{2}) e^{c_1(\Jinv_+(x^*) - \frac{1}{2})} = e^{-(\frac{c_1}{2} + c_0)} e^{x^*} (\Jinv_+(x^*) - \frac{1}{2}), \quad \frac{e^{c_1(\Jinv_+(x^*) - \frac{1}{2})}}{\Jinv_+(x^*) - \frac{1}{2}}  = e^{-(\frac{c_1}{2} + c_0)} \frac{e^{x^*}}{\Jinv_+(x^*) + \frac{1}{2}}.
  \end{equation}
  Substituting \eqref{eq:J(I(x))} into \eqref{eq:long_sum_evaluated}, and then substituting the simplified form of \eqref{eq:long_sum_evaluated} into \eqref{eq:first_id_transf}, we simplify \eqref{eq:first_id_transf}, or equivalently \eqref{eq:first_tricky_id}, and obtain the expression for $I_5$ from \eqref{eq:estimate_in_first_y_close_to_x}.
  
  Similarly we simplify \eqref{eq:second_tricky_id} as
  \begin{multline} \label{eq:key_of_second_id}
    \frac{i}{\sqrt{c_1}} \frac{\Jinv_+(x^*) + \frac{1}{2}}{\sqrt{\Jinv_+(x^*)^2 - \frac{1}{4} - \frac{1}{c_1}} \sqrt{\Jinv_-(x^*)^2 - \frac{1}{4} - \frac{1}{c_1}}} \\
    \times \left[ -\frac{1}{\Jinv_+(x^*) - \frac{1}{2}} + \sum^{\infty}_{l = 1} \sum^{\infty}_{k = 0} \left( \frac{c_1}{(1 + k + l)!} + \frac{1}{(k + l)!} \right) c^{k + l}_1 (\Jinv_+(x^*) - \frac{1}{2})^k (\Jinv_-(x^*) - \frac{1}{2})^l \right].
  \end{multline}
  Then, using the identities
  \begin{equation}
    \sum^{\infty}_{l = 1} \sum^{\infty}_{k = 0} \frac{1}{(k + l)!} x^ky^l = \frac{y}{y - x}(e^y - e^x), \quad \sum^{\infty}_{l = 1} \sum^{\infty}_{k = 0} \frac{1}{(1 + k + l)!} x^ky^l = \frac{y}{y - x}(\frac{e^y}{y} - \frac{e^x}{x}) + \frac{1}{x},
  \end{equation}
  we have analogous to \eqref{eq:long_sum_evaluated},
  \begin{multline} \label{eq:formula_with_cancellation_in_parentheses}
    \sum^{\infty}_{l = 1} \sum^{\infty}_{k = 0} \left( \frac{c_1}{(1 + k + l)!} + \frac{1}{(k + l)!} \right) c^{k + l}_1 (\Jinv_+(x^*) - \frac{1}{2})^k (\Jinv_-(x^*) - \frac{1}{2})^l = \\
    \frac{\Jinv_-(x^*) - \frac{1}{2}}{\Jinv_-(x^*) - \Jinv_+(x^*)} \left( \frac{\Jinv_-(x^*) + \frac{1}{2}}{\Jinv_-(x^*) - \frac{1}{2}} e^{c_1(\Jinv_-(x^*) - \frac{1}{2})} - \frac{\Jinv_+(x^*) + \frac{1}{2}}{\Jinv_+(x^*) - \frac{1}{2}} e^{c_1(\Jinv_+(x^*) - \frac{1}{2})} \right) + \frac{1}{\Jinv_+(x) - \frac{1}{2}}.
  \end{multline}
  Using the relation that $\Jlike(\Jinv_+(x^*)) = \Jlike(\Jinv_-(x^*))$, we find that the two terms between the parentheses in \eqref{eq:formula_with_cancellation_in_parentheses} cancel each other. Substituting \eqref{eq:formula_with_cancellation_in_parentheses} into \eqref{eq:key_of_second_id}, we find that $I_6 = 0$. Thus the proof is complete.
\end{proof}

\section*{Acknowledgements}
The authors were partially supported by the Fonds de la Recherche Scientifique-FNRS under EOS project O013018F (T.~C.), National Natural Science Foundation of China under grant number 11871425 (D.~W.), and the University of Chinese Academy of Sciences start-up grant 118900M043 (D.~W.).


\end{document}